\journal{Journal of \LaTeX\ Templates}
\nonstopmode \numberwithin{equation}{section}
\newtheorem{theorem}{Theorem}[section]
\newtheorem{lemma}[theorem]{Lemma}
\newtheorem{proposition}[theorem]{Proposition}
\newtheorem{remark}[theorem]{Remark}
\newtheorem{corollary}[theorem]{Corollary}
\newtheorem{definition}[theorem]{Definition}
\newcommand{\norm}[1]{\left\|#1\right\|}
\newcommand{\abs}[1]{\left\lvert#1\right\rvert}
\newtcolorbox[auto counter,number within=chapter]{prb}{%
	colback=green!10!white,colframe=green!20!black,fonttitle=\bfseries}
\begin{document}
	
	\begin{frontmatter}
		
		\title{Approximate Controllability of Stochastic Hemivariational Control problem in Hilbert spaces }
		
		\author{Bholanath Kumbhakar\fnref{myfootnote}}
		\address{Department of Mathematics, Indian Institute of Technology Roorkee}
		\fntext[myfootnote]{Email id: bkumbhakar@mt.iitr.ac.in}
		
		\author{Deeksha\fnref{myfootnote}}
		\address{Department of Mathematics, Indian Institute of Technology Roorkee}
		\fntext[myfootnote]{Email id: deeksha\_s.ma.iitr.ac.in}
		
		
		\author{Dwijendra Narain Pandey\corref{mycorrespondingauthor}}
		\cortext[mycorrespondingauthor]{Corresponding author}
		\address{Department of Mathematics, Indian Institute of Technology Roorkee}
		\ead{dwij@ma.iitr.ac.in}
		
		
		\begin{abstract}
			In this paper, we discuss the approximate controllability for control systems governed by stochastic evolution hemivariational inequalities in Hilbert spaces. The interest in studying this type of equation comes from its application in some physical models like the thermostat temperature control or the diffusion through semi-permeable walls. Firstly, we introduce the concept of weak solutions for hemivariational inequalities. Then, the controllability is formulated by utilizing stochastic analysis techniques and properties of Clarke subdifferential operators, as well as applying multivalued fixed point theorem. Finally, we conclude this article by an application in stochastic heat propagation problem.
		\end{abstract}
		
		\begin{keyword}
			Stchastic hemivariational inequality\sep Stochastic Differential Inclusions\sep Approximate controllability\sep Convexity 
			\MSC[2020] 34K30 \sep 35R10 \sep 93B05
			34G25 \sep
			35R70 
		\end{keyword}
		
	\end{frontmatter}

	\section{Introduction}
	Control theory finds applications across various fields in both engineering and mathematics. In fact, it plays a crucial role in the functioning of many modern devices, quietly operating behind the scenes in computers, automobiles, and other everyday technologies. As noted by Cara and Zuazua \cite{zuazua2003control}, control theory serves as a meeting point for numerous mathematical concepts and methods, giving rise to a rich and evolving area of mathematics. In the mathematical context, control theory is a branch of applied mathematics that focuses on the fundamental principles involved in the analysis and design of control systems. At its core, controlling a system means influencing its behavior to achieve a specific objective, as described by Sontag \cite{sontag2013mathematical}.
	
	Within control theory, systems can generally be classified into two types: deterministic and stochastic. While deterministic models offer a clear representation of many naturally occurring phenomena, they do not account for all aspects of real-world systems. As Kendrick \cite{kendrick2005stochastic} explains, deterministic control problems assume no uncertainty. In contrast, stochastic models incorporate elements of randomness, making them better suited for capturing the inherent unpredictability of real-life processes. Fleming and Rishel \cite{MR454768} distinguish between these two approaches in their work, devoting separate discussions to deterministic control theory and stochastic control of Markov diffusion processes. As Tiberio \cite{tiberio2004stochastic} points out, stochastic models introduce random variability into systems by including an error term in the differential equations that describe them. Therefore, the authors are enthusiastic about studying the controllability of stochastic control systems.

	Like deterministic systems, control theory for stochastic systems can be divided into two parts. The first part is control theory for stochastic finite dimensional systems governed by stochastic ordinary differential equations, and the second part is that for stochastic distributed parameter systems, described by stochastic differential equations in infinite dimensions, typically by stochastic partial differential equations.   One can find a huge list of publications on control theory for stochastic finite dimensional systems and its applications, say, in mathematical finance. In our opinion, control theory for stochastic distributed parameter systems is still at its very beginning stage.
	
	The controllability theory of stochastic distributed control systems mainly consists of three crucial notions: exact, approximate, and null controllability. In real-world systems, achieving exact controllability is often impossible due to noise, disturbances, or limitations in the control system itself. Therefore, approximate controllability problems are one of the most fundamental issues in the field of control engineering. Approximate controllability is also relevant for stochastic differential equations, where the system's behavior is influenced by random noise. For the controllability results described by stochastic evolution equations, we mention \cite{tiberio2004stochastic, MR4363403}.
	
	Our efforts here are prompted by the growing literature on the mathematical models for mechanical problems with nonconvex and nonsmooth energy functions. The lack of convexity and differentiability in boundary value problems in mechanics and engineering leads to new types of variational expressions called hemivariational inequalities. Many problems from nonsmooth contact mechanics involving multivalued and non monotone constitutive laws with boundary conditions can be modeled by means of hemivariational inequalities. The problem of nonmonotone semipermeability membranes and temperature control of a medium through the boundary regulated by the temperature in the interior or at the boundary leads to the evolution of hemivariational inequalities. We mention the papers \cite{MR647588}.
	
	Due to the importance of stochastic evolution equations and hemivariational inequalities in theoretical and real-life applications, finding its existence, controllability results, and other quantitative and qualitative properties in infinite dimensional spaces is significant. Therefore, in this paper, we consider the control problem governed by stochastic hemivariational inequality:
	\begin{equation}\label{HV1}
		\begin{cases}
			\langle -q^{\prime}(t)+Aq(t)+Bu(t)+\sigma(t)\frac{dW(t)}{dt}, v^*\rangle_{X} +F^0(t, q(t); v^*)\ge 0, \forall v^*\in X, ~~t\in I=[0,a]\\
			q(0)=x_0.
		\end{cases}
	\end{equation}
	Here, $\langle \cdot, \cdot \rangle_{X}$ denotes the inner product of the Hilbert space $X$. The closed linear operator $A:D(A)\subset X\to X$ generates a strongly continuous semigroup $\{T(t)\}_{t\ge 0}$ of bounded linear operators acting on $X$. The control operator $B: U\to X$ is bounded linear, acting on a separable Hilbert space $U$. The map $\sigma(t)\in \Sigma(t, q(t))$ for a.a. $t\in I$, where $\Sigma: I\times X\multimap \mathcal{L}^2_0$ is a nonempty, bounded, closed, convex valued multimap.
	The notation $F^0(t, \cdot;\cdot)$ stands for the generalized Clarke directional derivative (see \cite[page 25]{MR1058436}) of a locally Lipschitz function $F(t,\cdot): X\to \mathbb{R}$;  $\{W(t):t\ge 0\}$ is a given $\mathcal{K}$- valued Brownian motion or Wiener process with a finite trace nuclear covariance operator $Q\ge 0$; here $\mathcal{K}$ is Hilbert space with inner product $\langle \cdot, \cdot\rangle_{\mathcal{K}}$ and the norm $\norm{\cdot}_{\mathcal{K}}$ and $\mathcal{L}^2_0$ will be given later.

	We study the approximate controllability of the problem \eqref{HV1}, that means for any preassigned $\epsilon>0$, for any initial state $x_0\in X$ and any final state $z\in L^2_{\mathcal{F}_a}(\Omega,X)$ there exists a control $u_{\epsilon}\in L^2_{\mathbb{F}}(I,U)$ such that the corresponding solution $q_{\epsilon}\in C_{\mathbb{F}}(I, L^2(\Omega,X))$ satisfies
	\begin{equation}
		\mathbb{\mathbb{E}}\norm{q_{\epsilon}(a)-z}^2<\epsilon.
	\end{equation}
	The choice of \( z \in L^2_{\mathcal{F}_a}(\Omega, X) \) is justified for two key reasons. Firstly, the space \( L^2_{\mathcal{F}_a}(\Omega, X) \) serves as the natural state space for the variable \( q \). Secondly, in certain applications, it becomes necessary to consider cases where \( x_1 \in L^2_{\mathcal{F}_a}(\Omega, X) \). For instance, in deriving the Pontryagin maximum principle for infinite-dimensional control systems with terminal constraints, the concept of a finite co-dimensional condition is introduced (see \cite{li2012optimal}). This condition implies that the attainable set at time \( a \) is sufficiently large within the state space. Extending such results to stochastic settings requires examining the relationship between the reachable set and the space \( L^2_{\mathcal{F}_a}(\Omega, X) \).
	
	\vspace{0.2cm}
	
	Many real-world phenomena—such as population dynamics, stock market fluctuations, weather forecasting models, and heat conduction in materials with memory—are influenced by inherent randomness \cite{zhang2021analysis,fischer2019electric,omar2021fractional,guido2020feasibility}. Due to the presence of noise, deterministic models often undergo significant modifications. In fact, stochastic disturbances are not only unavoidable but also widespread in both natural and engineered systems. Consequently, deterministic models frequently fail to capture the true nature of fluctuations observed in practice. To better reflect reality, it becomes essential to incorporate stochastic processes into the modeling framework. In numerous cases, either the initial data and system parameters are randomly perturbed, or the underlying dynamics are inherently stochastic. This necessitates the integration of stochastic effects into the study of differential systems, leading naturally to the formulation and analysis of stochastic evolution equations.

	From our perspective, there are significant challenges associated with analyzing controllability problems in stochastic control systems. As noted in \cite{MR4363403, MR3932620, MR1402663}, the formulation of stochastic problems and the analytical techniques employed can differ markedly from those used in the deterministic setting. It is well known that if the deterministic control problem \eqref{linear deterc} is exactly controllable at time $a$ using controls from the space $L^1$ in time, then exact controllability can also be achieved using time-analytic controls. In fact, this result remains valid if the control space $L^1(I, \mathbb{R}^m)$ is replaced by $L^p(I, \mathbb{R}^m)$ for any \( p \in [1, \infty] \).
	
	In contrast, the situation is significantly different even in the most basic stochastic cases. For instance, consider the system
	\begin{equation}\label{51}
		dx(t) = [b x(t) + u(t)]\,dt + \sigma\,dW(t),
	\end{equation}
	where \( b \) and \( \sigma \) are given constants. It is evident that this system is exactly controllable when the controls \( u(\cdot) \) belong to the space \( L^1_{\mathbb{F}}(\Omega, L^2([0, T], \mathbb{R})) \) (see \cite{MR2984588}). However, exact controllability fails if the control space is restricted to \( L^2_{\mathbb{F}}(\Omega, L^2([0, T], \mathbb{R})) \).

	Similar to the deterministic case, exact controllability in stochastic control problems faces several limitations and seldom holds. Consider the following simple one-dimensional controlled stochastic differential equation:
	\begin{equation}
		\begin{cases}
			dy(t) = u(t)\,dt, \\
			y(0) = y_0,
		\end{cases}
	\end{equation}
	It has been shown in \cite{MR1402663} that this system is not exactly controllable when the admissible controls \( u(\cdot) \) are taken from the space \( L^2_{\mathbb{F}}(I, L^p(\Omega)) \), for any \( p \in (1, \infty] \). Consequently, it becomes necessary to consider a weaker notion of controllability, namely, approximate controllability. However, the authors in \cite{MR3932620} also demonstrated that the system generally fails to be approximately controllable. Interestingly, they further proved that if the control operator \( B \) is invertible, then the stochastic control system is approximately controllable.

	The study of the controllability of evolution inclusion problems governed by hemivariational inequalities involving Clarke generalized directional derivatives in practical engineering applications will be more important to the system's nonsmooth analysis and optimization. Therefore, many researchers have paid increasing attention to the solvability and control problems of stochastic evolution inclusions of Clarke subdifferential types\cite{MR3499943, MR3407227}.

	The study of approximate controllability of deterministic hemivariational inequalities in Hilbert spaces is well established. The authors \cite{MR4722465, MR4173835, MR3407227} have largely contributed to this field. Recently, the authors in \cite{MR3499943} studied the approximate controllability of stochastic hemivariational inequality problem \eqref{HV1} for the single-valued map $\Sigma$. Also, in \cite{MR3407227}, the authors studied the controllability for stochastic evolution inclusions involving Clarke subdifferentials. In \cite{MR3629137}, the authors discussed the existence of mild solutions for the stochastic evolution inclusion involving Clarke subdifferential and multimap $\Sigma$. In the present work, we generalize the results obtained in \cite{MR3499943} by incorporating multimap $\Sigma$. 
	We hope that the results of this paper pave the way to a better understanding of controllability problems for infinite dimensional stochastic control systems.

	The paper is organized as follows:
	\begin{itemize}
		\item[(1)] Section 1 is the Introduction, where we provide the problem statement and related literature review.
		\item[(2)] In Section 2, we provide the preliminary results used to prove the main results.
		\item[(3)] Section 3 is based on the existence of solutions to the hemivariational control problem \eqref{HV1}.
		\item[(4)] In Section 4, we study the approximate controllability of the hemivariational control problem \eqref{HV1}.
		\item[(5)] In Section 5, we justify the abstract findings of this paper through an example of a stochastic heat propagation problem.
		\item[(6)] Section 6 is used as an Appendix Section. 
	\end{itemize}

	\section{Preliminaries}
	In this section, we provide all the necessary preliminaries to make it a well-contained article. These preliminaries contain some definitions and results about stochastic integral in Banach spaces, multivalued maps, nonsmooth analysis, and control theory.
	\subsection{Some Basic Facts on Stochastic Calculus} 
	In this section, we recall definitions and facts about stochastic processes and stochastic calculus in Hilbert spaces. In this regard, readers are encouraged to consult the thesis \cite{MR3860612} and the papers \cite{MR3634281, MR1784435}.

	Throughout this section, we assume that \( (\Omega, \mathcal{F}, P) \) be a complete probability space endowed with a normal filtration \( \{\mathcal{F}_t\}_{t \in [0, a]} \) satisfying the usual conditions—namely, right-continuity and completeness (i.e., \( \mathcal{F}_0 \) contains all \( P \)-null sets). 
	
	Let $X$ be a separable Banach space with a norm $\norm{\cdot}_X$.
	\begin{definition}
		Let $I\subset \mathbb{R}$. A family $\{\xi(t)\}_{t\in I}$ of $X$-valued random variables $\xi(t),~t\in I$, defined on $\Omega$ is called an $X$-valued stochastic process on $I$.  
	\end{definition}
	\begin{definition}
		For each $\omega\in \Omega$, a mapping defined by $[0,a]\ni t\mapsto \xi(t,\omega)\in X$ is called trajectory of the process $\xi$.
	\end{definition}
	\begin{definition}
		The process $\xi$ is called continuous if the trajectories of $\xi$ are $P$- a.s. continuous on $I$, that means there exists $\bar{\Omega}\in \mathcal{F}$ with $P(\bar{\Omega})=1$ such that for each $\omega\in \bar{\Omega}$, the mapping
		\begin{equation}
			[0, a]\ni t\mapsto \zeta(t,\omega)\in X
		\end{equation}
		is continuous on $[0, a]$.
	\end{definition}
	\begin{definition}
		The process $\xi$ is called measurable if the mapping $\xi:[0,a]\times \Omega\to X$ is $\mathcal{B}([0,a])\otimes \mathcal{F}$- measurable.
	\end{definition}
	\begin{definition}
		A family $\{\mathcal{F}_t\}_{t\ge 0}$ of $\sigma$- fields $\mathcal{F}_t\subset \mathcal{F}$ is called a filtration if for any $0\le s\le t<\infty$, $\mathcal{F}_s\subset \mathcal{F}_t$. 
	\end{definition}
	Throughout this article, we assume that $\mathbb{F}=\{\mathcal{F}_t\}_{t\ge 0}$ is a filtration.
	\begin{definition}
		The process $\xi$ is said to be adapted to the filtration $\mathbb{F}$ if for each $t\in I$, $\xi(t)$ is $\mathcal{F}_t$-measurable.
	\end{definition}
	\begin{definition}
		The process $\xi$ is called progressively measurable if for each $t\in I$, the following mapping
		\begin{equation}
			[0,t]\times \Omega\ni (s,\omega)\mapsto \xi(s,\omega)\in X
		\end{equation}
		is $\mathcal{B}([0,t])\otimes \mathcal{F}_t$-measurable.
	\end{definition}
	\begin{proposition}
		If the process $\xi$ is measurable and adapted to $\mathbb{F}$, then it has an $\mathbb{F}$- progessively measurable modification $\zeta$, that means
		\begin{equation}
			P(\{\omega\in \Omega: \xi(t,\omega)\neq \zeta(t,\omega) \})=0,~\text{for every}~t\in I.
		\end{equation}
	\end{proposition}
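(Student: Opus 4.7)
The plan is to construct $\zeta$ by approximating $\xi$ with sufficiently smooth processes that are automatically progressively measurable, then passing to a limit. The canonical device is time-averaging: for each $n\ge 1$ I would set
\[
\xi_n(s,\omega) := n\int_{(s-1/n)^+}^{s}\xi(r,\omega)\,dr,
\]
interpreting the integral as a Bochner integral in $X$. To guarantee local integrability of the trajectories, I would first reduce to bounded $\xi$ through the truncation $\xi^{(m)} := \xi\,\mathbf{1}_{\norm{\xi}\le m}$, establish the result for each $\xi^{(m)}$ (which is still measurable and adapted), and then reassemble a modification of $\xi$ from those of the $\xi^{(m)}$.

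Next, I would verify that each $\xi_n$ is progressively measurable. For each fixed $\omega$, $s\mapsto \xi_n(s,\omega)$ is continuous by absolute continuity of the Bochner integral. Moreover, $\xi_n(s)$ is $\mathcal{F}_s$-measurable because the integrand uses only values $\xi(r,\cdot)$ with $r\le s$, each of which is $\mathcal{F}_r\subset\mathcal{F}_s$-measurable; Fubini then transfers this measurability to the integral. Since any continuous adapted $X$-valued process is progressively measurable, so is each $\xi_n$.

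For convergence, I would invoke Lebesgue's differentiation theorem in the Bochner setting: for $P$-a.e.\ $\omega$, $\xi_n(s,\omega)\to\xi(s,\omega)$ at Lebesgue-a.e.\ $s\in[0,a]$. Fubini upgrades this to the statement that the exceptional set in $[0,a]\times\Omega$ has product measure zero, whence for Lebesgue-a.e.\ $s$ the convergence holds $P$-a.s. I would then define
\[
\zeta(s,\omega) := \begin{cases}\lim_{n\to\infty}\xi_n(s,\omega), & \text{if the limit exists},\\ 0, & \text{otherwise},\end{cases}
\]
which is progressively measurable as a pointwise limit of progressively measurable processes.

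The genuine obstacle is the final step: upgrading ``Lebesgue-a.e.\ $s$'' to ``every $s\in I$''. The above construction only delivers $\zeta(s)=\xi(s)$ $P$-a.s.\ for almost every $s$, whereas the proposition demands equality at every fixed $s$. I would exploit the completeness of the filtration (assumed in the section preamble): on the exceptional Lebesgue-null set $B\subset I$ I would redefine $\zeta(s,\omega):=\xi(s,\omega)$. Since each $\xi(s,\cdot)$ is $\mathcal{F}_s$-measurable, $B$ has Lebesgue measure zero, and the completed product $\sigma$-algebra absorbs modifications over product null sets, this alteration preserves the $\mathcal{B}([0,t])\otimes\mathcal{F}_t$-measurability needed for progressive measurability at each $t\in I$. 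This upgrade is the only delicate point; everything else is the classical Chung--Doob construction.
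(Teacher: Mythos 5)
The paper offers no proof of this proposition at all: it is quoted as the classical Chung--Doob theorem from the cited stochastic-analysis references, so there is nothing internal to compare against. Your sketch follows the spirit of the classical argument (approximate by progressively measurable processes, pass to an a.e.\ limit, patch the exceptional times), but it contains two gaps, the second of which is fatal as written.

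First, the adaptedness of the time-averages $\xi_n$ does not follow from Fubini as you claim. Fubini applied to the joint measurability of $\xi$ on $[0,s]\times\Omega$ only yields that $\int_{(s-1/n)^+}^{s}\xi(r,\cdot)\,dr$ is $\mathcal{F}$-measurable, because the joint measurability you are given is with respect to $\mathcal{B}([0,s])\otimes\mathcal{F}$, not $\mathcal{B}([0,s])\otimes\mathcal{F}_s$. The sectionwise fact that each $\xi(r,\cdot)$ is $\mathcal{F}_r$-measurable does not pass through the Bochner integral; indeed, knowing that $\int_0^s\xi(r)\,dr$ is $\mathcal{F}_s$-measurable is essentially equivalent to the conclusion you are trying to prove. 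This step can be repaired (approximate $\xi$ in $L^1(\lambda\otimes P)$ by adapted left-continuous step processes $\sum_i\xi(t_i)\mathbf{1}_{(t_i,t_{i+1}]}$ using continuity of translation in $L^1$, and use completeness of $\mathcal{F}_s$ to pass to the limit), and the classical proof sidesteps it entirely by taking those step processes, rather than mollifications, as the approximating sequence.

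Second, and decisively, the final patch does not work. Completeness of the filtration means each $\mathcal{F}_t$ contains the $P$-null sets; it does \emph{not} make the product $\sigma$-algebra $\mathcal{B}([0,t])\otimes\mathcal{F}_t$ complete with respect to $\lambda\otimes P$ (products of complete $\sigma$-algebras are essentially never complete). After redefining $\zeta:=\xi$ on $B\times\Omega$, the restriction of the new process to $[0,t]\times\Omega$ decomposes as $\zeta\mathbf{1}_{B^c}+\xi\mathbf{1}_{B}$, and the second summand is only $\mathcal{B}([0,t])\otimes\mathcal{F}$-measurable, since all you know about $\xi$ jointly is measurability with respect to the full $\sigma$-algebra $\mathcal{F}$. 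Your patched process is therefore progressively measurable only modulo the $\lambda\otimes P$-completion of the product $\sigma$-algebra, which is strictly weaker than the definition used in the paper; and replacing it by a genuinely product-measurable version would again alter it on a $(t,\omega)$-null set and destroy the ``for every $t$'' modification property. Upgrading from ``a.e.\ $t$'' to ``every $t$'' is precisely the hard core of the Chung--Doob theorem and requires a different device than completion; as it stands, your argument proves only the existence of a process that is progressively measurable up to indistinguishability-type completions, not the stated result.
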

	\begin{proposition}
		If the process $\xi$ is continuous and adapted to $\mathbb{F}$, then it has an $\mathbb{F}$- progressively measurable modification.
	\end{proposition}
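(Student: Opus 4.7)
The plan is to build a progressively measurable modification of $\xi$ by approximating it pointwise with a sequence of $\mathcal{B}([0,t])\otimes\mathcal{F}_t$-measurable step processes obtained by sampling $\xi$ at finitely many time points, and then using pathwise continuity to pass to the limit. This is essentially the standard argument showing that a continuous adapted process is, up to modification on a single null set, progressively measurable.

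I would fix $t\in I$ and, for each $n\in\mathbb{N}$, partition $[0,t]$ into $n$ equal subintervals with endpoints $s^{n}_{k}=kt/n$. Define the step approximation
\begin{equation*}
\xi_{n}(s,\omega)=\sum_{k=0}^{n-1}\xi\bigl(s^{n}_{k+1},\omega\bigr)\,\mathbf{1}_{[s^{n}_{k},\,s^{n}_{k+1})}(s)+\xi(t,\omega)\,\mathbf{1}_{\{t\}}(s).
\end{equation*}
Adaptedness makes each random variable $\xi(s^{n}_{k+1},\cdot)$ $\mathcal{F}_{s^{n}_{k+1}}$-measurable, hence $\mathcal{F}_{t}$-measurable; the indicator sets lie in $\mathcal{B}([0,t])$. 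As a finite sum of products of such factors, $\xi_{n}$ is $\mathcal{B}([0,t])\otimes\mathcal{F}_{t}$-measurable.

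Next I would invoke pathwise continuity. Let $\bar{\Omega}\in\mathcal{F}$ with $P(\bar{\Omega})=1$ be the set on which trajectories of $\xi$ are continuous on $[0,a]$. For every $\omega\in\bar{\Omega}$ and $s\in[0,t]$, the sampling point $s^{n}_{k+1}$ containing $s$ satisfies $s^{n}_{k+1}\to s$, so continuity of $\xi(\cdot,\omega)$ on $[0,t]$ gives $\xi_{n}(s,\omega)\to\xi(s,\omega)$. Setting
\begin{equation*}
\zeta(s,\omega)=\xi(s,\omega)\,\mathbf{1}_{\bar{\Omega}}(\omega),
\end{equation*}
and using that $\bar{\Omega}\in\mathcal{F}_{0}\subset\mathcal{F}_{t}$ because $\mathcal{F}_{0}$ contains every $P$-null set, the restriction of $\zeta$ to $[0,t]\times\Omega$ is the pointwise limit of the measurable functions $\xi_{n}\,\mathbf{1}_{\bar{\Omega}}$ and is therefore $\mathcal{B}([0,t])\otimes\mathcal{F}_{t}$-measurable. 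Since $t\in I$ was arbitrary, $\zeta$ is $\mathbb{F}$-progressively measurable, and $P(\xi(t)\neq\zeta(t))\le P(\Omega\setminus\bar{\Omega})=0$ for every $t\in I$, so $\zeta$ is the required modification.

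The point requiring care, and the main obstacle, is the handling of the exceptional null set: a family of pointwise-in-$t$ modifications need not assemble into a jointly measurable process in $(s,\omega)$. Here the continuity hypothesis supplies a single $\bar{\Omega}$ of full measure on which the approximants converge to $\xi$ for every $s$ simultaneously, and the normality of the filtration ensures $\bar{\Omega}\in\mathcal{F}_{t}$ for every $t$. These two features are precisely what allow the limit to survive at the level of joint measurability, and they are exactly where the hypotheses of continuous paths and a complete, right-continuous filtration enter.
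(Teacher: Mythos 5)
Your argument is correct and is the standard one; the paper itself states this proposition without proof, citing it as a known preliminary fact from stochastic calculus, so there is no in-paper argument to compare against. Your right-endpoint step approximation, the use of pathwise continuity on a single full-measure set $\bar{\Omega}$, and the appeal to completeness of the filtration to place $\bar{\Omega}$ in $\mathcal{F}_0\subset\mathcal{F}_t$ are exactly the points that make the proof work, and you have handled each of them correctly.
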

	
	We consider a \( Q \)-Wiener process defined on \( (\Omega, \mathcal{F}, P) \), where \( Q \) is a linear, bounded, and self-adjoint covariance operator on \( \mathcal{K} \), satisfying \( \operatorname{tr}(Q) < \infty \). Suppose there exists a complete orthonormal basis \( \{e_n\}_{n \in \mathbb{N}} \subset \mathcal{K} \), along with a bounded sequence of nonnegative real numbers \( \{\lambda_n\}_{n \in \mathbb{N}} \), such that \( Qe_n = \lambda_n e_n \) for all \( n \in \mathbb{N} \). Further, let \( \{W_n\}_{n \in \mathbb{N}} \) be a sequence of independent standard Brownian motions. Then, for every \( e \in \mathcal{K} \) and \( t \in I \), the \( Q \)-Wiener process \( W(t) \) satisfies:
	\begin{equation}
		\langle W(t), e \rangle_{\mathcal{K}} = \sum_{n=1}^{\infty} \sqrt{\lambda_n} \langle e_n, e \rangle_{\mathcal{K}} W_n(t),
	\end{equation}
	and the filtration \( \mathcal{F}_t \) coincides with the natural filtration \( \mathcal{F}_t^w \) generated by the Wiener process \( W \), that is, \( \mathcal{F}_t = \mathcal{F}_t^w = \sigma\{W(s): 0 \leq s \leq t\} \).
	
	Let \( \mathcal{L}^2_0 = L^2(Q^{1/2} \mathcal{K}; H) \) denote the space of all Hilbert-Schmidt operators from the image of \( Q^{1/2} \) to \( H \), equipped with the inner product
	\[
	\langle \Psi, \Gamma \rangle_{\mathcal{L}^2_0} = \operatorname{tr}(\Psi Q \Gamma^*).
	\]
	
	Moreover, let \( L^2(\Omega, H) \) denote the Banach space of all \( \mathcal{F}_a \)-measurable, square-integrable \( H \)-valued random variables, endowed with the norm
	\[
	\|f\| = \left( \mathbb{E} \|f(\cdot, \omega)\|_H^2 \right)^{1/2}.
	\]

	We now define the space
	\begin{equation}
		L^2_{\mathbb{F}}(\Omega, C(I,X))=\{\phi:[0,a]\times \Omega\to X: \phi~\text{is continuous}~\mathbb{F}~\text{adapted and}~\mathbb{E}\left(\sup_{t\in [0,a]}\norm{\phi(t)}_X^2\right)<\infty\}.
	\end{equation}
	It is noted that the space $ L^2_{\mathbb{F}}(\Omega, C(I,X))$ is a Banach space under the norm
	\begin{equation}
		\norm{\phi}_{ L^2_{\mathbb{F}}(\Omega, C(I,X))}=\mathbb{E}\left(\sup_{t\in [0,a]}\norm{\phi(t)}_X^2\right)^{\frac{1}{2}}.
	\end{equation}
	Also, define the space
	\begin{equation}
		C_{\mathbb{F}}(I,L^2(\Omega, X))=\{\phi:[0,a]\times \Omega\to X: \phi~\text{is}~\mathbb{F}~\text{adapted and}~\phi(\cdot):I\to L^2(\Omega, X)~\text{is continuous}\}.
	\end{equation}
	It is noted that the space $C_{\mathbb{F}}(I,L^2(\Omega, X))$ is a Banach space under the norm
	\begin{equation}
		\norm{\phi}_{C_{\mathbb{F}}(I,L^2(\Omega, X))}=\sup_{t\in [0,a]}\mathbb{E}\left(\norm{\phi(t)}_X^2\right)^{\frac{1}{p}}.
	\end{equation}
	It is clear that
	\begin{equation}
		\norm{\phi}_{C_{\mathbb{F}}(I,L^2(\Omega, X))}\le \norm{\phi}_{L^p_{\mathbb{F}}(\Omega, C(I,X))},
	\end{equation}
	for every $\phi:[0,a]\times \Omega\to X: \phi~\text{is continuous}~\mathbb{F}~\text{adapted}$.
	$L_{\mathbb{F}}^2(I,X)$ will denote the Banach space of all $\mathcal{F}_t$- adapted measurable random processes defined on $I$ with values in $X$ and the norm
	\begin{equation}
		\norm{f}_{L_{\mathbb{F}}^2(I,X)}=\left(\mathbb{E}\int_{0}^{a}\norm{f(t)}_X^2dt\right)^{\frac{1}{2}}.
	\end{equation}
	Similarly, we define the Hilbert space
	$L_{\mathbb{F}}^2(I,U)$ of all $\mathcal{F}_t$- adapted measurable random processes defined on $I$ with values in $U$ and the norm
	\begin{equation}
		\norm{u}_{L_{\mathbb{F}}^2(I,U)}=\left(\mathbb{E}\int_{0}^{a}\norm{u(t)}_U^2dt\right)^{\frac{1}{2}}.
	\end{equation}

	We now state some properties of stochastic integration in Hilbert spaces.
	
	\begin{theorem}\label{TS1}\cite{MR1784435}
		Let $(\Omega, \mathcal{F}, P)$. where $\mathbb{F}=(\mathcal{F}_t)_{t\ge 0}$ be a filtered probability space. Assume $(X, \norm{\cdot}_X)$ is a Hilbert space and $\mathcal{K}$ is a separable Hilbert space endowed with an inner product $\langle \cdot, \cdot\rangle_{\mathcal{K}}$. Assume that  $\{W(t):t\ge 0\}$ is a given $\mathcal{K}$- valued Brownian motion or Wiener process with a finite trace nuclear covariance operator $Q\ge 0$. If $\sigma$ is a $\mathcal{L}^2_0$-valued $\mathbb{F}$-progressively measurable process on $[0, a], ~a>0$ such that 
		\begin{equation}
			\mathbb{E}\left[\int_{0}^{a}\norm{\sigma(t)}^2_{\mathcal{L}^2_0}dt\right]<\infty,
		\end{equation}
		then the stochastic integral $\int_{0}^{a}\sigma(t)dW(t)$ is well defined and there exists $C>0$ such that 
		\begin{equation}
			\mathbb{E}\left[\norm{\int_{0}^{a}\sigma(t)dW(t)}_X^2\right]\le C 	\int_{0}^{a}\mathbb{E}\norm{\sigma(t)}^2_{\mathcal{L}^2_0}dt,
		\end{equation}
		that is, $\displaystyle \int_{0}^{a}\sigma(t)dW(t)$ belongs to the space $L^2(\Omega, \mathcal{F},P,X)$.
	\end{theorem}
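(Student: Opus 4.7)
The plan is the standard It\^o construction in Hilbert spaces, carried out in three stages: define the integral on an elementary class, prove the isometry, and extend by density. First I would define the integral pathwise for elementary (simple) processes of the form $\sigma(t) = \sum_{j=0}^{n-1} \sigma_j \mathbf{1}_{(t_j, t_{j+1}]}(t)$, where $0 = t_0 < \cdots < t_n = a$ and each $\sigma_j$ is an $\mathcal{F}_{t_j}$-measurable, square-integrable $\mathcal{L}^2_0$-valued random variable, by setting
\begin{equation*}
\int_0^a \sigma(t)\,dW(t) \;=\; \sum_{j=0}^{n-1} \sigma_j\bigl(W(t_{j+1}) - W(t_j)\bigr).
\end{equation*}
Since each $\sigma_j$ acts as a Hilbert--Schmidt operator from $Q^{1/2}\mathcal{K}$ into $X$ and is independent of the increment $W(t_{j+1}) - W(t_j)$, each summand lies in $L^2(\Omega; X)$ and the sum is well defined.

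Second, I would verify the It\^o isometry on this class. Expanding the squared norm and using adaptedness, the cross terms $\mathbb{E}\langle \sigma_i \Delta W_i, \sigma_j \Delta W_j\rangle_X$ with $i<j$ vanish after conditioning on $\mathcal{F}_{t_j}$, since $\sigma_j \Delta W_j$ has conditional mean zero given $\mathcal{F}_{t_j}$. The diagonal terms evaluate to $(t_{j+1}-t_j)\,\mathbb{E}\|\sigma_j\|_{\mathcal{L}^2_0}^2$ via the identity $\mathrm{tr}(\sigma_j Q \sigma_j^*) = \|\sigma_j\|_{\mathcal{L}^2_0}^2$, giving the exact equality
\begin{equation*}
\mathbb{E}\Bigl\|\int_0^a \sigma(t)\,dW(t)\Bigr\|_X^2 \;=\; \int_0^a \mathbb{E}\|\sigma(t)\|_{\mathcal{L}^2_0}^2\,dt,
\end{equation*}
which is the claimed inequality with $C = 1$.

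Third, I would extend the integral to all $\mathbb{F}$-progressively measurable integrands satisfying the hypothesis by density. Concretely, I would approximate an arbitrary such $\sigma$ in the norm $\bigl(\mathbb{E}\int_0^a \|\sigma(t)\|_{\mathcal{L}^2_0}^2\,dt\bigr)^{1/2}$ by a sequence of elementary processes $\sigma_n$ obtained by truncation, causal-in-time mollification, and left-endpoint Riemann discretization on increasingly fine partitions; by the isometry, $\{\int_0^a \sigma_n\,dW\}$ is Cauchy in $L^2(\Omega;X)$, and completeness gives a unique limit, which I take as the definition of $\int_0^a \sigma\,dW$. Passing to the limit in the isometry yields the stated estimate. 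The main obstacle is this density step: each approximation must preserve $\mathbb{F}$-progressive measurability (which is why left-endpoint evaluation and causal mollifiers are essential), and one must first confirm that $t \mapsto \|\sigma(t)\|_{\mathcal{L}^2_0}$ is jointly measurable so that Fubini can be applied to identify the limiting norms. Once density is secured, the rest of the argument is the routine bounded-linear-extension of the isometry.
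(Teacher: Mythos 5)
The paper does not prove this statement at all: it is quoted verbatim from the cited reference and used as a black box, so there is no internal proof to compare against. Your reconstruction is the standard and correct one (elementary processes, It\^o isometry via conditioning on $\mathcal{F}_{t_j}$ and the identity $\mathbb{E}\|\sigma_j\Delta W_j\|_X^2=(t_{j+1}-t_j)\,\mathbb{E}\,\mathrm{tr}(\sigma_j Q\sigma_j^*)$, then extension by density), and it matches how the result is established in the literature the paper points to. Two minor remarks: your argument actually yields the sharper conclusion $C=1$ (an equality), which is consistent with but stronger than the stated bound; and in the density step you should note explicitly that a progressively measurable $\sigma$ with $\mathbb{E}\int_0^a\|\sigma(t)\|_{\mathcal{L}^2_0}^2\,dt<\infty$ agrees $dt\otimes P$-a.e.\ with a predictable modification, which is what licenses the approximation by left-endpoint elementary processes. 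Neither point is a gap in the approach.
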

	\begin{theorem}\cite{MR1784435}
		Suppose that all the assumptions of Theorem \ref{TS1} are satisfied. Let $\sigma$ be a $\mathcal{L}^2_0$-valued $\mathbb{F}$-progressively measurable process on $[0, a], ~a>0$ such that 
		\begin{equation}
			\mathbb{E}\left[\int_{0}^{a}\norm{\sigma(t)}^2_{\mathcal{L}^2_0}dt\right]<\infty,
		\end{equation}
		Then the process $\{\phi(t)\}_{t\in [0, a]}$ defined by
		\begin{equation}
			\phi(t)=\int_{0}^{t}\sigma(s)dW(s), ~t\in [0, a]
		\end{equation}
		is a martingale and has a continuous modification; in particular, it is $\mathbb{F}$- progressively measurable. Moreover, there exists a constant $c>0$ (independent of $\sigma$) such that 
		\begin{equation}
			\mathbb{E}\left[\sup_{t\in [0, a]}\norm{\phi(t)}_X^2\right]\le c\int_{0}^{t}	\mathbb{E}\norm{\sigma(s)}^2_{\mathcal{L}^2_0}ds.
		\end{equation}
	\end{theorem}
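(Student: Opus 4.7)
The plan is to establish the three assertions in the order: martingale property, existence of a continuous modification, and Doob-type maximal inequality; each step pivots on an approximation by elementary processes, the only class for which the stochastic integral is defined directly as a finite sum.

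First, I would recall from the construction of $\int_0^t\sigma(s)\,dW(s)$ (already invoked in Theorem~\ref{TS1}) that every $\mathcal{L}^2_0$-valued $\mathbb{F}$-progressively measurable process $\sigma$ with $\mathbb{E}\int_0^a\norm{\sigma(t)}_{\mathcal{L}^2_0}^2\,dt<\infty$ is the $L^2_{\mathbb{F}}(I\times\Omega;\mathcal{L}^2_0)$-limit of a sequence $\{\sigma_n\}$ of elementary (step) processes of the form $\sigma_n(t)=\sum_k \xi_k^{(n)}\mathbf{1}_{(t_k^{(n)},t_{k+1}^{(n)}]}(t)$ with $\xi_k^{(n)}$ bounded and $\mathcal{F}_{t_k^{(n)}}$-measurable. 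For such $\sigma_n$ the integral $\phi_n(t):=\int_0^t\sigma_n(s)\,dW(s)=\sum_k \xi_k^{(n)}\bigl(W(t\wedge t_{k+1}^{(n)})-W(t\wedge t_k^{(n)})\bigr)$ is manifestly continuous in $t$, $\mathbb{F}$-adapted, and, because $W$ has independent increments and $\xi_k^{(n)}$ is $\mathcal{F}_{t_k^{(n)}}$-measurable, a direct conditional-expectation calculation shows that $\mathbb{E}[\phi_n(t)\mid\mathcal{F}_s]=\phi_n(s)$ for $s\le t$; hence each $\phi_n$ is a continuous square-integrable $X$-valued $\mathbb{F}$-martingale.

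Next I would obtain the maximal estimate for $\phi_n$ by combining the Hilbert-space It\^o isometry
\[
\mathbb{E}\norm{\phi_n(a)}_X^2=\int_0^a\mathbb{E}\norm{\sigma_n(s)}_{\mathcal{L}^2_0}^2\,ds
\]
with Doob's $L^2$-maximal inequality applied to the nonnegative submartingale $\norm{\phi_n(\cdot)}_X$, which yields
\[
\mathbb{E}\Bigl[\sup_{t\in[0,a]}\norm{\phi_n(t)}_X^2\Bigr]\le 4\,\mathbb{E}\norm{\phi_n(a)}_X^2\le 4\int_0^a\mathbb{E}\norm{\sigma_n(s)}_{\mathcal{L}^2_0}^2\,ds.
\]
Applying the same inequality to the differences $\phi_n-\phi_m$, which are themselves integrals of elementary processes, shows that $\{\phi_n\}$ is Cauchy in the Banach space $L^2_{\mathbb{F}}(\Omega,C(I,X))$ introduced in the preliminaries; let $\phi$ denote its limit. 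By construction $\phi$ is continuous and $\mathbb{F}$-adapted, and coincides almost surely with $\int_0^{\cdot}\sigma(s)\,dW(s)$ for every fixed $t$ (because the latter depends continuously on $\sigma$ in $L^2(\Omega,X)$ by Theorem~\ref{TS1}); thus $\phi$ is the sought continuous modification, and by Proposition on progressive measurability of continuous adapted processes it is automatically $\mathbb{F}$-progressively measurable. Passing to the limit $n\to\infty$ in the Doob estimate via Fatou's lemma delivers the desired bound with $c=4$.

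Finally I would verify the martingale property for $\phi$ itself: for any $0\le s\le t\le a$ and any bounded $\mathcal{F}_s$-measurable $X$-valued random variable $\eta$, the identity $\mathbb{E}\langle\phi_n(t)-\phi_n(s),\eta\rangle_X=0$ survives the $L^2$-limit because $\phi_n(t)\to\phi(t)$ in $L^2(\Omega,X)$, yielding $\mathbb{E}[\phi(t)\mid\mathcal{F}_s]=\phi(s)$. I expect the main obstacle to be the careful bookkeeping in the approximation step: one must ensure that the elementary processes $\sigma_n$ approximate $\sigma$ not merely in $L^2(I\times\Omega;\mathcal{L}^2_0)$ but in a manner compatible with the progressive measurability of $\sigma$, and that the Hilbert-Schmidt/Hilbert-space version of Doob's inequality is available for $X$-valued martingales; the former is a standard localization/mollification argument, while the latter follows from the real-valued Doob inequality applied to $\norm{\phi_n(\cdot)}_X$ since this is a continuous real submartingale.
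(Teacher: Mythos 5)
The paper does not prove this statement; it is quoted verbatim from the cited reference \cite{MR1784435} as a preliminary fact, so there is no in-paper argument to compare against. Judged on its own, your proof is the standard Da Prato--Zabczyk construction and is correct: approximation of the progressively measurable integrand by elementary processes, the explicit martingale/continuity properties of elementary integrals, the It\^o isometry combined with Doob's $L^2$ maximal inequality applied to the real, nonnegative submartingale $\norm{\phi_n(\cdot)}_X$ (a submartingale by conditional Jensen), a Cauchy argument in $L^2_{\mathbb{F}}(\Omega, C(I,X))$, and passage to the limit in both the maximal estimate and the martingale identity. The constant $c=4$ is exactly what this route yields, and your bound correctly has $\int_0^a$ on the right-hand side, where the paper's displayed inequality contains a typo ($\int_0^t$). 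The only points needing the routine care you already flag are the density of elementary processes within the class of progressively measurable square-integrable integrands and the interpretation of $\xi_k^{(n)}\bigl(W(t\wedge t_{k+1}^{(n)})-W(t\wedge t_k^{(n)})\bigr)$ when the integrand is $\mathcal{L}^2_0$-valued rather than $\mathcal{L}(\mathcal{K},X)$-valued; both are standard for a trace-class covariance $Q$.
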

	\begin{theorem}\label{stochastic convolution estimate}\cite{MR3953459}
		Let $(\Omega, \mathcal{F},\mathbb{F}, P)$, where $\mathbb{F}=(\mathcal{F}_t)_{t\ge 0}$ be a filtered probability space. Assume $(X, \norm{\cdot}_X)$ is a Hilbert space and $\mathcal{K}$ is a separable Hilbert space endowed with an inner product $\langle \cdot, \cdot\rangle_{\mathcal{K}}$. Assume that  $\{W(t):t\ge 0\}$ is a given $\mathcal{K}$- valued Brownian motion or Wiener process with a finite trace nuclear covariance operator $Q\ge 0$. Let $\{S(t)\}_{t\ge 0}$ be a contraction $C_0$- semigroup on $X$. If $\sigma$ is a $\mathcal{L}^2_0$-valued $\mathbb{F}$-progressively measurable process on $[0, a], ~a>0$ such that 
		\begin{equation}
			\mathbb{E}\left[\int_{0}^{a}\norm{\sigma(t)}^2_{\mathcal{L}^2_0}dt\right]<\infty.
		\end{equation}
		Then there exists a constant $K_a>0$ such that
		\begin{equation}
			\mathbb{E}\left[\sup_{t\in [0, a]}\norm{\int_{0}^{t}S(t-r)\sigma(r)dW(r)}_X^2\right]\le K_a 	\int_{0}^{a}\mathbb{E}
			\norm{\sigma(t)}^2_{\mathcal{L}^2_0}dt.
		\end{equation}
	\end{theorem}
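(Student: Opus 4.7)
The plan is to prove the maximal inequality via the Sz.-Nagy unitary dilation theorem, which is the standard route for stochastic convolutions driven by a \emph{contraction} $C_0$-semigroup. The key idea is that a genuine stochastic convolution is not itself a martingale, but we can realize it as the image under a unitary group action of an object that \emph{is} a martingale, and then apply Doob's maximal inequality.

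\textbf{Step 1 (Dilation).} By the Sz.-Nagy dilation theorem, since $\{S(t)\}_{t\ge 0}$ is a contraction $C_0$-semigroup on the Hilbert space $X$, there exist a Hilbert space $\widetilde{X}$ containing $X$ isometrically, an orthogonal projection $P:\widetilde{X}\to X$, and a strongly continuous group $\{U(t)\}_{t\in\mathbb{R}}$ of \emph{unitary} operators on $\widetilde{X}$ such that
\begin{equation}
S(t)x = P\,U(t)\,x,\qquad x\in X,\ t\ge 0.
\end{equation}
I will extend $\sigma$ to an $\mathcal{L}^2_0(\mathcal{K},\widetilde{X})$-valued, $\mathbb{F}$-progressively measurable process (viewing $X\hookrightarrow \widetilde{X}$), noting that the Hilbert--Schmidt norm is preserved by the isometric inclusion.

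\textbf{Step 2 (Reduction to a martingale).} Using $S(t-r)=P\,U(t)\,U(-r)$ and pulling the (deterministic, bounded) operator $P\,U(t)$ outside the stochastic integral (which is legitimate by Theorem~\ref{TS1} and linearity of the Itô integral), I obtain
\begin{equation}
\int_0^t S(t-r)\sigma(r)\,dW(r) \;=\; P\,U(t)\,M(t),\qquad M(t):=\int_0^t U(-r)\sigma(r)\,dW(r).
\end{equation}
Because $U(-r)$ is unitary, $U(-\cdot)\sigma(\cdot)$ is again $\mathcal{L}^2_0$-valued, $\mathbb{F}$-progressively measurable, and its Hilbert--Schmidt norm equals that of $\sigma$; thus Theorem~\ref{TS1} applies and $M$ is a continuous $\widetilde{X}$-valued square-integrable martingale.

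\textbf{Step 3 (Maximal inequality).} Since $\norm{P}\le 1$ and $U(t)$ is an isometry on $\widetilde{X}$, I estimate pointwise in $\omega$
\begin{equation}
\sup_{t\in[0,a]}\norm{\int_0^t S(t-r)\sigma(r)\,dW(r)}_X^2 \le \sup_{t\in[0,a]}\norm{M(t)}_{\widetilde{X}}^2,
\end{equation}
and then apply Doob's $L^2$ maximal inequality to the martingale $M$, followed by the Itô isometry in $\widetilde{X}$:
\begin{equation}
\mathbb{E}\sup_{t\in[0,a]}\norm{M(t)}_{\widetilde{X}}^2 \le 4\,\mathbb{E}\norm{M(a)}_{\widetilde{X}}^2 = 4\int_0^a \mathbb{E}\norm{U(-r)\sigma(r)}_{\mathcal{L}^2_0}^2\,dr = 4\int_0^a \mathbb{E}\norm{\sigma(r)}_{\mathcal{L}^2_0}^2\,dr,
\end{equation}
which gives the claimed bound with $K_a=4$ (in fact independent of $a$).

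The main obstacle I anticipate is the bookkeeping in Step~2: one has to justify that pulling out $P\,U(t)$ is permissible (this follows from the fact that the stochastic integral commutes with bounded linear operators applied outside the integrand, plus a standard approximation by elementary processes), and one must confirm that the dilation preserves the progressive-measurability and $\mathcal{L}^2_0$-integrability hypotheses so that $M$ is genuinely a continuous martingale in $\widetilde{X}$ to which Doob's inequality applies. Once the dilation framework is set up, the rest is routine.
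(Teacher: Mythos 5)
The paper does not prove this theorem at all: it is imported verbatim as a cited preliminary result (from \cite{MR3953459}), so there is no in-paper argument to compare against. Your dilation proof is correct and is in fact the standard route for this maximal inequality: Sz.-Nagy unitary dilation of the contraction semigroup, factoring $S(t-r)=P\,U(t)\,U(-r)$ so that the convolution becomes $P\,U(t)\,M(t)$ with $M$ a genuine continuous square-integrable martingale, then Doob's $L^2$ maximal inequality plus the It\^o isometry (with the Hilbert--Schmidt norm preserved under composition with the isometries $U(-r)$), yielding the constant $4$ independently of $a$. The only point worth making explicit is that $M$ admits a continuous modification so that Doob's inequality applies to the running supremum, but that is guaranteed by the martingale property of the stochastic integral already recorded in the paper's preliminaries.
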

	\subsection{Multivalued Maps}
	In this section, we present important facts from multivalued analysis that are used in this article. For this, we refer to the books \cite{MR1831201},\cite{MR2976197}.
	\begin{definition}
		A multivalued map $\Gamma:X\multimap Y$ is said to be
		\begin{itemize}
			\item  convex  valued if the set $\Gamma(x)$ is convex for every $x\in X$ and closed valued if $\Gamma(x)$ is closed for all $x\in X$.
			\item  bounded if it maps bounded sets in $X$ into bounded sets in $Y$. Namely, if $B_X$ is a bounded set in $X$, then the set $\Gamma(B_X)$ is bounded in $Y$, that is, 
			\begin{equation*}
				\sup_{x\in B_X}\{\sup\{\norm{y}:y\in \Gamma(x)\}\}<\infty.
			\end{equation*}
		\end{itemize}
	\end{definition}
	We now define the continuity property of the multifunction. A multivalued map $\Gamma: X\multimap Y$ has 
	\begin{itemize}
		\item  upper semicontinuity property at the point $x_0\in X$ if for each open set $O_Y\subset Y$ containing $\Gamma(x_0)$, there exists an open neighbourhood $O_X$ of $x_0$ such that $\Gamma(O_X)\subset O_Y$. If $\Gamma$ is upper semicontinuous at every $x\in X$, then $\Gamma$ is upper semicontinuous.
		\item  strongly weakly closed graph property if for every sequences $\{x_n\}_{n\in \mathbb{N}}\subset X$, $\{y_n\}_{n\in \mathbb{N}}\subset Y$ such that $y_n\in \Gamma(x_n)$ for all $n\in \mathbb{N}$ with $x_n\to X$ in $X$, $y_n\rightharpoonup y$ in $Y$, then we have $y\in \Gamma(x)$.
	\end{itemize}
	We now move forward towards the multivalued measurable functions.\\
	Let $I\subset \mathbb{R}$ be a measurable set and $\mathcal{L}$ be the $\sigma$- algebra of subsets of $I$. Also let $X$  be a separable reflexive Banach space.
	\begin{definition}
		A multivalued map $\Gamma:I\multimap X$ is said to be measurable if for every $C_X\subset X$ closed, the set 
		\begin{equation*}
			\{t\in I: \Gamma(t)\cap C_X\neq \phi\}\in \mathcal{L}.
		\end{equation*}
	\end{definition}
	Let $\mathcal{B}(X)$ be the Borel $\sigma$- algebra of subsets of $X$.
	\begin{definition}
		We say that the multivalued mapping $\Gamma: I\times X\multimap X$ is $\mathcal{L}\times \mathcal{B}(X)$ measurable if
		\begin{equation*}
			\Gamma^{-1}(C_X)=\{(t,x)\in I\times X: \Gamma (t,x)\cap C_X\neq \phi\}\in \mathcal{L}\times \mathcal{B}(X),
		\end{equation*}
		for any closed set $C_X\subset X$.
	\end{definition} 
	The following fixed point theorem is useful in this article.
	\begin{theorem}\label{fixed}\cite{MR46638}
		Let $X$ be a Hausdorff locally convex topological vector space, $K$ a compact convex subset of $X$, and $\Gamma: K\multimap K$ an upper semicontinuous multimap with closed, convex values. Then the multimap $\Gamma$ has a fixed point in $K$.
	\end{theorem}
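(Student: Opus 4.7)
The plan is to reduce the problem to finite dimensions via a simplicial approximation procedure and then invoke Brouwer's fixed point theorem, pushing the conclusion back to $\Gamma$ by a compactness-and-upper-semicontinuity limit. This is the classical Kakutani--Fan--Glicksberg route, and I would organise it in three steps.

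First, I would exploit the compactness of $K$ to produce, for every symmetric convex open neighbourhood $V$ of the origin in $X$, a finite open cover $\{U_1,\dots,U_n\}$ of $K$ by sets of the form $U_i = (x_i + V) \cap K$. For each $i$, I pick a selection $y_i \in \Gamma(x_i)$ and a continuous partition of unity $\{\phi_i\}_{i=1}^n$ subordinate to the cover. Then I form the continuous single-valued map
\begin{equation*}
f_V(x) = \sum_{i=1}^{n} \phi_i(x)\, y_i,
\end{equation*}
whose image lies in the convex hull $C_V = \operatorname{co}\{y_1,\dots,y_n\} \subset K$. Since $C_V$ is a compact convex subset of a finite-dimensional affine subspace (and hence homeomorphic to a closed simplex), Brouwer's fixed point theorem yields $x_V \in C_V$ with $f_V(x_V) = x_V$.

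Second, I would let $V$ run through a neighbourhood base of $0$ directed by reverse inclusion. The resulting net $\{x_V\}$ lies in the compact set $K$, so it has a cluster point $x^{\ast} \in K$. I claim $x^{\ast} \in \Gamma(x^{\ast})$. Fix an arbitrary open convex neighbourhood $O$ of $\Gamma(x^{\ast})$ in $X$; by upper semicontinuity of $\Gamma$, there exists a neighbourhood $N$ of $x^{\ast}$ with $\Gamma(N \cap K) \subset O$. For $V$ sufficiently small and $x_V$ close enough to $x^{\ast}$, every index $i$ with $\phi_i(x_V) > 0$ satisfies $x_i \in N \cap K$, whence $y_i \in O$. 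Convexity of $O$ then gives $x_V = f_V(x_V) \in O$, and passing to the cluster point yields $x^{\ast} \in \overline{O}$. Since $\Gamma(x^{\ast})$ is closed and convex, separating it from any point outside by Hahn--Banach lets me shrink $O$ arbitrarily tightly around $\Gamma(x^{\ast})$, forcing $x^{\ast} \in \Gamma(x^{\ast})$.

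The main obstacle is the third step, the upper-semicontinuity-to-limit transfer in the general locally convex setting; in a Banach space one simply chooses $V$ to be $\varepsilon$-balls and uses uniform continuity of $\Gamma$ on the compact $K$, but in a Hausdorff TVS one must argue exclusively through the base of convex symmetric neighbourhoods of zero and the Hahn--Banach separation of $\Gamma(x^{\ast})$ from external points. The remaining ingredients, namely the existence of continuous partitions of unity on the compact (hence normal) set $K$ and the applicability of Brouwer in each finite-dimensional $C_V$, are standard and cause no difficulty.
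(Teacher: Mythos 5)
The paper does not prove this statement at all: it is quoted verbatim as the classical Ky Fan--Glicksberg fixed point theorem from \cite{MR46638} and used as a black box in Step IV of Theorem \ref{MR}. Your argument is the standard proof of that theorem and is essentially sound: the partition-of-unity approximation $f_V$, the Brouwer step on the finite-dimensional simplex $C_V=\operatorname{co}\{y_1,\dots,y_n\}\subset K$, and the passage to a cluster point of the net $\{x_V\}$ are all correct, and the final step is rightly handled by strict Hahn--Banach separation of the point $x^{\ast}$ from the closed convex set $\Gamma(x^{\ast})$ (assume $x^{\ast}\notin\Gamma(x^{\ast})$, separate by a continuous functional $\ell$ with $\ell(x^{\ast})<\alpha<\ell(y)$ for $y\in\Gamma(x^{\ast})$, take $O=\{\ell>\alpha\}$ and contradict $x^{\ast}\in\overline{O}$). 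Two cosmetic points: the phrase ``uniform continuity of $\Gamma$'' in your closing remark is not the right notion for upper semicontinuous multimaps, and you should make explicit that the cover centres $x_i$ are taken in $K$ (so that $\Gamma(x_i)$ is defined) and that $x_i\in x^{\ast}+V+V\subset N$ uses the symmetry of $V$; neither affects the validity of the argument.
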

	\subsection{Nonsmooth Analysis}
	In what follows, let us define Clarke subdifferential (see \cite[page 27]{MR1058436}) of a locally Lipschitzian functional $F: X\to \mathbb{R}$. We denote by $F^0(y;z)$ the Clarke generalized directional derivative of $F$ at $y$ in the direction $z$, that is
	\begin{equation}
		F^0(y;z)=\lim_{\epsilon\to 0^+}\sup_{\xi\to y}\frac{F(\xi+\epsilon z)-F(\xi)}{\epsilon}.
	\end{equation} 
	Recall also that the Clarke subdifferential of $F$ at $y$, denoted by $\partial F(y)$ is a subset of $X$ given by
	\begin{equation}
		\partial F(y)=\{y^*\in X: F^0(y;z)\ge \langle y^*,z\rangle_{X}, \forall z\in X\}.
	\end{equation}
	The following basic properties of the generalized directional derivative and the generalized gradient are important in our main results.
	\begin{lemma}\label{Lemma 2.14} \cite[Proposition 2.1.2]{MR1058436}
		If $F: X\to \mathbb{R}$ is a locally Lipschitz function, then
		\begin{itemize}
			\item[(i)] for every $z\in X$, one has 
			\begin{equation}
				F^0(y;z)=\max\{\langle y^*,z\rangle_{X}: ~\text{for all}~y^*\in \partial F(y)\}.
			\end{equation}
			\item[(ii)] for every $y\in X$, the gradient $\partial F(y)$ is a nonempty, convex, weak$^*$-compact subset of $X$ and $\norm{y^*}_{X}\le K$ for any $y^*\in \partial F(y)$ (where $K>0$ is the Lipschitz constant of $F$ near $y$).
			\item[(iii)] the graph of the generalized gradient $\partial F$ is closed in $X\times w-X$ topology, that is, if $\{y_n\}_{n\in \mathbb{N}}\subset X$ and $\{y_n^*\}_{n\in \mathbb{N}}\subset X$ are sequences such that $y_n^*\in \partial F(y_n)$ and $y_n\to y$ in $X$, $y_n^*\to y^*$ weakly in $X$, then $y^*\in \partial F(y)$. Here, $w-X$ denotes the Banach space $X$ furnished with the weak topology.
			\item[(iv)] the multifunction $y\multimap \partial F(y)\subset X$ is upper semicontinuous from $X$ into $w-X$.
		\end{itemize} 
	\end{lemma}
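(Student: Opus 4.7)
The plan is to verify the four items by establishing, in order, (a) sublinearity and Lipschitz‐type bounds on the map $z\mapsto F^0(y;z)$, (b) a Hahn–Banach argument that gives both the max formula (i) and nonemptiness of $\partial F(y)$, and (c) straightforward weak/weak$^*$ limit arguments for (iii) and (iv).

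First I would record two elementary facts about $F^0$. Since $F$ is locally Lipschitz near $y$ with constant $K$, the difference quotient $\varepsilon^{-1}(F(\xi+\varepsilon z)-F(\xi))$ is bounded by $K\|z\|_X$ for $\xi$ near $y$ and $\varepsilon$ small; taking $\limsup$ gives $|F^0(y;z)|\le K\|z\|_X$, which is the key estimate for the bound in (ii). Next, $z\mapsto F^0(y;z)$ is positively homogeneous (rescale $\varepsilon$) and subadditive: split $F(\xi+\varepsilon(z_1+z_2))-F(\xi)=[F(\xi+\varepsilon z_1+\varepsilon z_2)-F(\xi+\varepsilon z_1)]+[F(\xi+\varepsilon z_1)-F(\xi)]$ and re‐center the inner $\limsup$ using $\xi':=\xi+\varepsilon z_1\to y$. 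Thus $F^0(y;\cdot)$ is a continuous sublinear functional.

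For (i) and the nonemptiness/convexity part of (ii): fix $z_0\in X$ and define the linear functional $\ell(tz_0)=tF^0(y;z_0)$ on $\mathrm{span}\{z_0\}$; by Hahn–Banach extend it to $\tilde\ell:X\to\mathbb R$ with $\tilde\ell(z)\le F^0(y;z)$ everywhere. The bound $F^0(y;z)\le K\|z\|_X$ makes $\tilde\ell$ continuous, and by Riesz representation in the Hilbert space $X$ it corresponds to some $y^*\in \partial F(y)$ with $\langle y^*,z_0\rangle_X=F^0(y;z_0)$. Combined with the trivial inequality $\langle y^*,z\rangle_X\le F^0(y;z)$ from the very definition of $\partial F(y)$, this proves (i) and shows $\partial F(y)\ne\varnothing$. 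Convexity is immediate from linearity in $y^*$ in that defining inequality. The norm bound $\|y^*\|_X\le K$ follows from $\langle y^*,z\rangle_X\le F^0(y;z)\le K\|z\|_X$, and weak$^*$-compactness (which, $X$ being Hilbert, coincides with weak compactness of a closed ball) is Banach–Alaoglu applied to the weakly closed, norm-bounded set $\partial F(y)$.

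For (iii), assume $y_n\to y$, $y_n^*\rightharpoonup y^*$, $y_n^*\in\partial F(y_n)$. The main technical input I need is upper semicontinuity of $(y,z)\mapsto F^0(y;z)$, i.e.\ $\limsup_n F^0(y_n;z)\le F^0(y;z)$; this follows directly from the $\limsup$ structure of $F^0$ by noting that any sequence $\xi_n\to y_n$ in the inner sup can be merged into a single sequence approaching $y$. Passing to the limit in $\langle y_n^*,z\rangle_X\le F^0(y_n;z)$ via weak convergence on the left and the above $\limsup$ on the right gives $y^*\in\partial F(y)$. Finally (iv) is a formal consequence: a multimap with nonempty weakly compact values that is locally bounded (by the Lipschitz constant, from (ii)) and whose graph is closed in $X\times w$-$X$ is upper semicontinuous from $X$ into $w$-$X$; the standard argument reduces u.s.c.\ at $y$ to showing that if $y_n\to y$ and $y_n^*\in\partial F(y_n)$ then a subsequence converges weakly, and any weak cluster point lies in $\partial F(y)$ by (iii).

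The main obstacle I expect is proving the max formula in (i); everything else is bookkeeping, but the equality $F^0(y;z)=\max_{y^*\in\partial F(y)}\langle y^*,z\rangle_X$ is nontrivial because it requires that the sublinear functional $F^0(y;\cdot)$ be recovered as the support function of $\partial F(y)$—which is exactly what the Hahn–Banach separation buys, but only once sublinearity and the Lipschitz bound on $F^0$ have been carefully checked.
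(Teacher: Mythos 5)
This lemma is stated in the paper without proof — it is quoted verbatim from Clarke's book (\cite[Proposition 2.1.2]{MR1058436}) as a preliminary. Your sketch reproduces the standard argument from that reference correctly: sublinearity plus the bound $|F^0(y;z)|\le K\|z\|_X$, Hahn--Banach/Riesz for the max formula and nonemptiness, weak closedness and Banach--Alaoglu for (ii), upper semicontinuity of $(y,z)\mapsto F^0(y;z)$ for (iii), and the usual local-boundedness-plus-closed-graph reduction for (iv); the only point worth making explicit is that continuity of $\tilde\ell$ uses the bound in both directions, i.e. $-\tilde\ell(z)=\tilde\ell(-z)\le F^0(y;-z)\le K\|z\|_X$ as well.
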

	\begin{lemma}\label{Lemma 2.15}  \cite[Proposition 3.44]{MR2976197}
		Let $X$ be a separable reflexive Banach space and $F: I\times X\to \mathbb{R}$ be a function such that $F(\cdot,x)$ is measurable for all $x\in X$ and $F(t,\cdot)$ is locally Lipschitz on $X$ for almost all $t\in I$. Then, the multifunction $(t,x)\multimap \partial F(t,x)$ is measurable, where $\partial F$ denotes the Clarke subdifferential of $F(t,\cdot)$.
	\end{lemma}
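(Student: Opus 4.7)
The plan is to reduce measurability of $(t,x) \multimap \partial F(t,x)$ to graph measurability by exploiting the characterization of the Clarke subdifferential as a countable intersection of closed half-spaces whose coefficients involve the generalized directional derivative $F^0$.

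First, I would note that the hypotheses make $F$ a Carathéodory function: $F(\cdot, x)$ is $\mathcal{L}$-measurable and $F(t, \cdot)$ is locally Lipschitz, hence continuous, so $F$ is jointly $\mathcal{L} \otimes \mathcal{B}(X)$-measurable (standard in a separable space). Then for each fixed direction $v \in X$, I would argue that
\begin{equation*}
\varphi_v(t,x) := F^0(t,x;v) = \limsup_{\substack{y \to x \\ h \to 0^+}} \frac{F(t, y + h v) - F(t, y)}{h}
\end{equation*}
is jointly $(\mathcal{L} \otimes \mathcal{B}(X))$-measurable. Using continuity of $F(t,\cdot)$, the $\limsup$ can be restricted to $y$ ranging over a fixed countable dense set $D \subset X$ and $h$ over positive rationals, giving
\begin{equation*}
\varphi_v(t,x) = \inf_{n \in \mathbb{N}} \sup \left\{ \tfrac{F(t, y + h v) - F(t, y)}{h} : y \in D,\ \|y - x\| < \tfrac{1}{n},\ h \in \mathbb{Q} \cap (0, \tfrac{1}{n}) \right\}.
\end{equation*}
For each admissible $(y, h)$ the integrand is measurable in $t$ and continuous in $x$, so it is jointly measurable; the countable $\sup$ and $\inf$ preserve this.

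Second, I would invoke Lemma \ref{Lemma 2.14}(i) and the fact that the evaluation $v^* \mapsto \langle v^*, v \rangle_X$ is weakly continuous, together with density of $\{v_n\}_{n \in \mathbb{N}} \subset X$, to write
\begin{equation*}
\partial F(t,x) = \bigcap_{n \in \mathbb{N}} \bigl\{ v^* \in X : \langle v^*, v_n \rangle_X \leq \varphi_{v_n}(t,x) \bigr\}.
\end{equation*}
Hence the graph
\begin{equation*}
\operatorname{Gr}(\partial F) = \bigl\{ (t, x, v^*) \in I \times X \times X : \langle v^*, v_n \rangle_X - \varphi_{v_n}(t,x) \leq 0 \ \text{for all } n \in \mathbb{N} \bigr\}
\end{equation*}
is a countable intersection of sets in $\mathcal{L} \otimes \mathcal{B}(X) \otimes \mathcal{B}(X)$, and thus measurable in the product $\sigma$-algebra.

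Finally, since $X$ is separable and reflexive, and the values $\partial F(t,x)$ are nonempty, convex, and weakly compact (Lemma \ref{Lemma 2.14}(ii)), graph measurability yields measurability of the multifunction in the sense of the paper: for every closed $C \subset X$, the set $\{(t,x) : \partial F(t,x) \cap C \neq \emptyset\}$ is the projection onto $I \times X$ of $\operatorname{Gr}(\partial F) \cap (I \times X \times C)$, which lies in the completion of $\mathcal{L} \otimes \mathcal{B}(X)$ by the projection theorem (alternatively, via a Castaing representation from measurable selectors). The main technical obstacle is the second step — establishing joint measurability of $\varphi_v$, where care is needed to justify replacing the full $\limsup$ over a neighborhood basis by a countable lattice using only the continuity of $F(t,\cdot)$ in $x$ and measurability in $t$, and then tracking measurability through the nested $\inf/\sup$.
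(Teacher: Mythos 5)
The paper offers no proof of this lemma: it is quoted directly from Proposition 3.44 of \cite{MR2976197}, so there is no in-paper argument to compare against. Your reconstruction follows the standard proof of that proposition — joint measurability of the Carath\'eodory function $F$, joint measurability of $(t,x)\mapsto F^0(t,x;v)$ via a countable $\inf$--$\sup$ lattice, representation of $\partial F(t,x)$ as a countable intersection of half-spaces over a dense sequence of directions, graph measurability, and the projection theorem — and it is essentially correct. Three small points deserve sharper justification. First, the reduction of the intersection over all $v\in X$ to the countable family $\{v_n\}$ rests on the Lipschitz continuity of $v\mapsto F^0(t,x;v)$ (with the local Lipschitz constant of $F(t,\cdot)$ near $x$), not on weak continuity of the evaluation $v^*\mapsto\langle v^*,v\rangle_X$, which is beside the point since here $(t,x,v^*)$ are fixed and the direction $v$ varies. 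Second, in your lattice formula the individual difference quotients do not depend on $x$ at all; the $x$-dependence enters only through the constraint $\|y-x\|<\tfrac 1n$, so the correct statement is that each term, extended by $-\infty$ off the open set $\{x:\|y-x\|<\tfrac 1n\}$, is $\mathcal{L}\otimes\mathcal{B}(X)$-measurable, and the countable $\sup$ and $\inf$ preserve this. Third, as you yourself flag, the Yankov--von Neumann--Aumann projection theorem places $\{(t,x):\partial F(t,x)\cap C\neq\emptyset\}$ only in the ($\mu$- or universal) completion of $\mathcal{L}\otimes\mathcal{B}(X)$ rather than in $\mathcal{L}\times\mathcal{B}(X)$ itself; this matches the level of precision of the cited source and suffices for the superpositional measurability actually used in the paper (the selection multimap $S_{\partial F}$), so it is a technicality to acknowledge rather than a gap.
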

	\subsection{Some basic results about controllability}
	In this section, we discuss some basic results of deterministic and stochastic linear control systems. For this section, we refer the readers to the book \cite{MR4363403}.

	We consider the linear stochastic control system associated with Problem \eqref{HV2}.
	\begin{equation}\label{linear stochastic}
		\begin{cases}
			q^{\prime}(t)=Aq(t)+Bu(t)+\sigma(t)dW(t),~t\in I\\
			q(0)=x_0.
		\end{cases}
	\end{equation}
	In the above $u\in L^2_{\mathbb{F}}(I,U)$ and $\sigma\in L^2_{\mathbb{F}}(I,\mathcal{L}^2_0)$.
	
	We begin with the notion of exact controllability of \eqref{linear stochastic}.
	\begin{definition}
		The system \eqref{linear stochastic} is called exactly controllable at time $a$ if for any $x_0\in X$ and $x_1\in L^2_{\mathcal{F}_a}(\Omega,X)$, there is a control $u\in L^2_{\mathbb{F}}(I,U)$ such that the corresponding solution $q(\cdot)$ to \eqref{linear stochastic} fulfils that $q(a)=x_1$ a.s.
	\end{definition}
	
	The authors in Theorem 3.1. concludes that the stochastic linear control system \eqref{linear stochastic} is not exactly controllable when the control, which is $L^2$ with respect to the time variable, is only acted in the drift term.
	
	Now, we introduce null and approximate controllability. 
	\begin{definition}
		The system \eqref{linear stochastic} is called null controllable at time $a$ if for any $x_0\in X$ there is a control $u\in L^2_{\mathbb{F}}(I, U)$ such that the corresponding solution $q(\cdot)$ to \eqref{linear stochastic} fulfills that $q(a)=0$ a.s.
	\end{definition}
	\begin{definition}
		The system \eqref{linear stochastic} is called approximately controllable at time $a$ if for any $x_0\in X$ and $x_1\in L^2_{\mathcal{F}_a}(\Omega,X)$ and $\epsilon>0$, there is a control $u\in L^2_{\mathbb{F}}(I,U)$ such that the corresponding solution $q(\cdot)$ to \eqref{linear stochastic} fulfils that 
		\begin{equation}
			\norm{q(a)-x_1}_{L^2_{\mathbb{F}_a}(\Omega,X)}<\epsilon
		\end{equation}
	\end{definition}
	We now consider the deterministic control system
	\begin{equation}\label{linear deterc}
		\begin{cases}
			q^{\prime}(t)=Aq(t)+Bu(t),~t\in I\\
			q(0)=x_0.
		\end{cases}
	\end{equation}
	In the above $u\in L^2(I,U)$.

	It has been shown that the deterministic linear control system is null controllable at time \( a \) if and only if the corresponding stochastic linear control system is also null controllable. Nevertheless, exact or approximate controllability of the deterministic system \eqref{linear deterc} does not guarantee the same for the stochastic system \eqref{linear stochastic}. 
	
	Additionally, it is worth noting that if the operator \( A \) generates a \( C_0 \)-group on \( X \), then the null controllability of \eqref{linear deterc} implies its exact or approximate controllability. This implication, however, does not extend to the stochastic system \eqref{linear stochastic}, highlighting a novel phenomenon specific to the stochastic framework.

	For deterministic control systems, if we assume that the control operator $B$ is invertible, then it is easy to get controllability results. The following theorem shows that the same holds for stochastic control systems.
	\begin{theorem}\label{stochastic approximate controllability}\cite{MR4363403}
		If $B\in \mathcal{L}(U, X)$ is invertible, then the system \eqref{linear stochastic} is both null and approximately controllable at any time $a>0$.
	\end{theorem}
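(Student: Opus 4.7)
The plan is to treat the two conclusions separately, exploiting the invertibility of $B$ in complementary ways: I will reduce null controllability to the deterministic problem, and I will handle approximate controllability by a duality argument against the adjoint backward SDE.

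For null controllability, I will invoke the equivalence, recorded just above, between the null controllability of \eqref{linear deterc} and that of \eqref{linear stochastic}. It therefore suffices to control the deterministic system, for which the natural candidate, made available by the invertibility of $B$, is
\[
u(t) = -\frac{1}{a}B^{-1}T(t)x_0,\qquad t\in[0,a].
\]
This belongs to $L^2(I,U)\subset L^2_{\mathbb{F}}(I,U)$, and the semigroup identity $T(a-s)T(s)=T(a)$ will make $\int_0^a T(a-s)Bu(s)\,ds$ collapse to $-T(a)x_0$, so the deterministic mild solution vanishes at time $a$. The cited equivalence then transfers this to the stochastic system.

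For approximate controllability, I plan to introduce, for each $\xi\in L^2_{\mathcal{F}_a}(\Omega,X)$, the adjoint backward SDE
\[
dz(t)=-A^*z(t)\,dt+\bar z(t)\,dW(t),\qquad z(a)=\xi,
\]
which admits a unique adapted solution pair $(z,\bar z)\in L^2_{\mathbb{F}}(I,X)\times L^2_{\mathbb{F}}(I,\mathcal{L}^2_0)$. Applying It\^o's formula to $\langle z(t),q(t)\rangle_X$ I expect to derive the duality identity
\[
\mathbb{E}\langle \xi,q(a)\rangle_X = \langle z(0),x_0\rangle_X + \mathbb{E}\int_0^a\langle B^*z(t),u(t)\rangle_U\,dt + \mathbb{E}\int_0^a\langle \bar z(t),\sigma(t)\rangle_{\mathcal{L}^2_0}\,dt.
\]
A Hahn--Banach argument will then reduce approximate controllability to the implication that $\mathbb{E}\int_0^a\langle B^*z(t),u(t)\rangle_U\,dt=0$ for every $u\in L^2_{\mathbb{F}}(I,U)$ forces $\xi=0$. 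Varying $u$ over adapted processes will force $B^*z\equiv 0$ on $I\times\Omega$; invertibility of $B\in\mathcal{L}(U,X)$ carries over to $B^*\in\mathcal{L}(X,U)$, so $z\equiv 0$ and in particular $\xi=z(a)=0$, yielding the density of the reachable set in $L^2_{\mathcal{F}_a}(\Omega,X)$.

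The hard part will be the rigorous justification of It\^o's formula for $\langle z,q\rangle_X$ when the generator $A^*$ is only a closed, unbounded operator on $X$; I anticipate handling this via Yosida approximations of $A^*$ (or by working directly with the mild formulations of both the forward and backward equations and invoking a stochastic Fubini) followed by a limit in $L^2_{\mathbb{F}}$. A second, less delicate point is cleanly isolating the $u$-dependent term in the duality identity, so that the density criterion for the reachable set correctly absorbs the $x_0$- and $\sigma$-dependent offsets as fixed translations.
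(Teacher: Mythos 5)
The paper does not prove this statement at all: it is imported verbatim from the cited monograph \cite{MR4363403}, so there is no in-paper argument to compare against. Judged on its own merits, your proposal is essentially correct and follows the standard route of that reference. The null-controllability half is fine: $u(t)=-\tfrac{1}{a}B^{-1}T(t)x_0$ does give $\int_0^aT(a-s)Bu(s)\,ds=-T(a)x_0$, and you are entitled to the deterministic--stochastic equivalence because the paper records it immediately before the theorem (be aware, though, that this equivalence is itself a nontrivial theorem of \cite{MR4363403}, since an adapted control cannot cancel future noise pathwise; if you wanted a self-contained proof you would have to reprove it). The approximate-controllability half via the backward adjoint equation is also sound: the duality identity is correct (the $\langle -A^*z,q\rangle$ and $\langle z,Aq\rangle$ terms cancel, the martingale terms vanish in expectation, and the It\^o correction produces the $\langle\bar z,\sigma\rangle_{\mathcal{L}^2_0}$ term), the reachable set is a fixed translate of the range of the input-to-state map, and $B^*z\equiv 0$ with $B^*$ boundedly invertible forces $z\equiv 0$ a.e., hence $\xi=z(a)=0$ by continuity of $t\mapsto z(t)$ in $L^2(\Omega,X)$. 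Two points you should make explicit when writing this up: (i) well-posedness of the infinite-dimensional linear BSDE rests on the martingale representation theorem for the $Q$-Wiener process, which is available here precisely because the paper assumes $\mathcal{F}_t=\mathcal{F}_t^W$; and (ii) the It\^o computation with unbounded $A^*$ does need the Yosida (or mild-formulation plus stochastic Fubini) regularization you anticipate --- that is the standard fix and it closes the gap.
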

	\begin{corollary}
		If $B\in \mathcal{L}(U, X)$ is invertible, then the linear control system \eqref{linear deterc} is both null and approximately controllable at any time $a>0$.
	\end{corollary}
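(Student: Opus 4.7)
The plan is to obtain this corollary essentially for free from Theorem \ref{stochastic approximate controllability}. The deterministic system \eqref{linear deterc} is the special case of \eqref{linear stochastic} in which the diffusion coefficient $\sigma$ vanishes identically; a deterministic $x_0\in X$ (respectively a deterministic target $x_1\in X$) embeds into $L^2_{\mathcal{F}_a}(\Omega,X)$ as a constant random variable. Applying Theorem \ref{stochastic approximate controllability} with $\sigma\equiv 0$ therefore produces an $\mathbb{F}$-adapted control $u\in L^2_{\mathbb{F}}(I,U)$ steering the (stochastic, but now degenerate) system to $0$, respectively to within $\epsilon$ of $x_1$. In the absence of noise the mild-solution map $u\mapsto q$ is affine, so the deterministic process $\bar u(t):=\mathbb{E}[u(t)]\in L^2(I,U)$ drives \eqref{linear deterc} to $\mathbb{E}[q(a)]$, recovering both null and approximate controllability in the deterministic sense.

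Alternatively, invertibility of $B$ allows a completely explicit construction which I record because it is so short. For null controllability, set
\[
u(t)=-\tfrac{1}{a}\,B^{-1}T(t)x_0;
\]
by the bounded inverse theorem $B^{-1}\in\mathcal{L}(X,U)$, and strong continuity of $T(\cdot)x_0$ on the compact interval $[0,a]$ guarantees $u\in L^2(I,U)$. Using the semigroup identity $T(t-s)T(s)=T(t)$ inside the convolution representation $q(t)=T(t)x_0+\int_0^t T(t-s)Bu(s)\,ds$ gives $q(t)=(1-t/a)\,T(t)x_0$, whence $q(a)=0$. For approximate controllability towards a given $x_1\in X$, set $z:=x_1-T(a)x_0$ and concentrate the control near the terminal time: for small $\delta>0$, define $u_\delta(s)=\tfrac{1}{\delta}B^{-1}z$ on $[a-\delta,a]$ and $u_\delta(s)=0$ elsewhere. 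A change of variables then yields
\[
q_\delta(a)-x_1 \;=\; \frac{1}{\delta}\int_0^\delta T(r)z\,dr \;-\; z,
\]
and the right-hand side tends to $0$ as $\delta\to 0^+$ by strong continuity of $\{T(t)\}_{t\ge 0}$ at the origin. Choosing $\delta$ small enough gives $\|q_\delta(a)-x_1\|<\epsilon$.

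There is no genuine analytic obstacle here: the corollary is simultaneously a direct specialisation of the previously established stochastic result \emph{and} admits the above elementary direct proof, the invertibility of $B$ removing any need for the delicate approximate-inverse, observability, or resolvent arguments required when $B$ is merely assumed to be bounded. The only minor point worth noting is the passage from an $\mathbb{F}$-adapted control supplied by Theorem \ref{stochastic approximate controllability} to a genuinely deterministic one, which is handled by taking expectations as indicated in the first paragraph.
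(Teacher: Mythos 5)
Your proposal is correct. The paper itself offers no written proof: the corollary is stated immediately after Theorem \ref{stochastic approximate controllability} and is evidently meant to follow by specializing the stochastic result to $\sigma\equiv 0$ with deterministic data. Your first paragraph is exactly that argument, and it actually supplies a step the paper silently skips — the control produced by Theorem \ref{stochastic approximate controllability} lives in $L^2_{\mathbb{F}}(I,U)$ and may be genuinely random, so one must de-randomize it; your device of replacing $u$ by $\bar u(t)=\mathbb{E}[u(t)]$ works because with $\sigma\equiv0$ the solution map is affine and pathwise deterministic in form, Fubini gives $\mathbb{E}[q(a)]=T(a)x_0+\int_0^aT(a-s)B\bar u(s)\,ds$, and Jensen gives $\norm{\mathbb{E}[q(a)]-x_1}^2\le\mathbb{E}\norm{q(a)-x_1}^2<\epsilon$. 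Your second, explicit construction ($u(t)=-\tfrac1a B^{-1}T(t)x_0$ for null controllability, and the terminal-time concentration $u_\delta=\tfrac1\delta B^{-1}(x_1-T(a)x_0)$ on $[a-\delta,a]$ for approximate controllability) is a genuinely different and fully self-contained route that does not appear in the paper; both computations check out, and it has the advantage of bypassing the stochastic theorem (and the resolvent operator $(\epsilon I+G(a))^{-1}$ machinery) entirely, at the cost of being specific to invertible $B$. Either argument alone suffices; no gap.
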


	Let us define $G(a):X\to X$ as follows:
	\begin{equation}\label{Controllability gramian}
		G(a)y^*=\int_{0}^{a}T(a-s)BB^*T^*(a-s)y^*ds,~ y^*\in X.
	\end{equation} 
	
	We now give a criterion for the approximate controllability of the system \eqref{linear deterc} in terms of the adjoint of the semigroup.
	\begin{theorem}\label{Thm4.2}
		The system \eqref{linear deterc} is approximately controllable on $I$ if and only if $	\text{for every}~y^*\in X$,
		\begin{equation*}
			B^*T^*(a-t)y^*=0, ~\text{for all }~t\in I \Rightarrow y^*=0.
		\end{equation*}
	\end{theorem}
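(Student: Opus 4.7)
The plan is to reduce approximate controllability to a density statement about the reachable set at time $a$, and then dualize by Hahn--Banach to obtain the adjoint condition. Concretely, let $R(a) := \{\int_0^a T(a-s)Bu(s)\,ds : u \in L^2(I,U)\} \subset X$. Since the mild solution of \eqref{linear deterc} is $q(a) = T(a)x_0 + \int_0^a T(a-s)Bu(s)\,ds$, the system is approximately controllable on $I$ precisely when, for every $x_0 \in X$, the set $T(a)x_0 + R(a)$ is dense in $X$, which is equivalent to $\overline{R(a)} = X$ (the initial data contribute only a fixed translation).

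Next, I would apply the Hahn--Banach consequence that a (closed) linear subspace $V \subset X$ satisfies $\overline{V} = X$ if and only if its annihilator is trivial. Identifying $X$ with its dual via the Riesz map, $y^* \in R(a)^\perp$ is equivalent to
\begin{equation*}
\Bigl\langle y^*, \int_0^a T(a-s)Bu(s)\,ds\Bigr\rangle_X = 0 \qquad \text{for every } u \in L^2(I,U).
\end{equation*}
Interchanging the integral and the inner product (justified because $s \mapsto T(a-s)Bu(s)$ is Bochner integrable and $y^*$ defines a continuous linear functional), this reads
\begin{equation*}
\int_0^a \langle B^* T^*(a-s) y^*, u(s)\rangle_U \, ds = 0 \qquad \text{for every } u \in L^2(I,U),
\end{equation*}
and by the fundamental lemma of the calculus of variations applied in $L^2(I,U)$, this is equivalent to $B^*T^*(a-s)y^* = 0$ for a.e.\ $s \in I$.

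To close both directions, I would upgrade the almost-everywhere identity to an everywhere identity by invoking strong continuity of $T^*(\cdot)$ on $X$: since $T$ is a $C_0$-semigroup on the Hilbert space $X$, $T^*$ is also a $C_0$-semigroup, and hence the map $t \mapsto B^*T^*(a-t)y^*$ is continuous on $I$; an a.e.-zero continuous function vanishes identically. Combining the equivalences gives: approximate controllability $\iff$ $R(a)^\perp = \{0\}$ $\iff$ the stated adjoint implication. The main technical point to be careful about is the interchange of the inner product with the Bochner integral and the passage from almost-everywhere to everywhere vanishing; both are routine but deserve to be stated explicitly since the whole equivalence hinges on them.
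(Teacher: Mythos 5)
Your argument is correct, and it is the standard duality proof of this classical criterion: identify approximate controllability with density of the reachable subspace $R(a)$, dualize via the annihilator, move $y^*$ through the Bochner integral to get $\int_0^a \langle B^*T^*(a-s)y^*, u(s)\rangle_U\,ds = 0$ for all $u$, and choose $u(s) = B^*T^*(a-s)y^*$ to force the integrand to vanish. Note that the paper itself states Theorem~\ref{Thm4.2} without proof, treating it as a known preliminary result, so there is no in-paper argument to compare against; your write-up would serve as a complete proof. You correctly flag the two points that actually need care: the interchange of the inner product with the Bochner integral, and the upgrade from almost-everywhere to everywhere vanishing, which is legitimate here because $X$ is a Hilbert space, so $T^*(\cdot)$ is again a $C_0$-semigroup and $t \mapsto B^*T^*(a-t)y^*$ is continuous. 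One small slip: in invoking Hahn--Banach you write ``a (closed) linear subspace $V$ is dense iff its annihilator is trivial''; the parenthetical ``closed'' should be dropped, since the statement is applied to the (not necessarily closed) subspace $R(a)$ and a closed dense subspace would already equal $X$.
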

	The following simple observation is crucial, and it is immediate from the positivity of the operator $G(a)$.
	\begin{lemma}\label{Lem4.5}\cite[Remark 4.2]{MR4104454}
		Suppose the system \eqref{linear deterc} is approximately controllable on $I$; then $G(a)$ (defined in \ref{Controllability gramian}) is injective.
	\end{lemma}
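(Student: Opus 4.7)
The plan is to combine the positivity (self-adjointness and non-negativity) of the Gramian $G(a)$ with the duality criterion for approximate controllability stated in Theorem \ref{Thm4.2}. Concretely, I will show that the kernel of $G(a)$ coincides with the set of $y^{*}\in X$ for which $B^{*}T^{*}(a-t)y^{*}=0$ on $[0,a]$, and then invoke the hypothesis of approximate controllability to conclude that this kernel is trivial.

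First, suppose $y^{*}\in X$ satisfies $G(a)y^{*}=0$. Pairing with $y^{*}$ and using self-adjointness of $BB^{*}$ together with $(T(a-s))^{*}=T^{*}(a-s)$, I would compute
\begin{equation*}
0=\langle G(a)y^{*},y^{*}\rangle_{X}=\int_{0}^{a}\langle BB^{*}T^{*}(a-s)y^{*},T^{*}(a-s)y^{*}\rangle_{X}\,ds=\int_{0}^{a}\norm{B^{*}T^{*}(a-s)y^{*}}_{U}^{2}\,ds.
\end{equation*}
Since the integrand is non-negative, this yields $B^{*}T^{*}(a-s)y^{*}=0$ for almost every $s\in[0,a]$.

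Next, I would upgrade the almost-everywhere statement to everywhere by continuity. Because $\{T(t)\}_{t\ge 0}$ is strongly continuous, so is its adjoint on the reflexive (Hilbert) space $X$, and $B^{*}\in\mathcal{L}(X,U)$; hence the map $t\mapsto B^{*}T^{*}(a-t)y^{*}$ is continuous on $[0,a]$. A function that is continuous and vanishes almost everywhere must vanish identically, so $B^{*}T^{*}(a-t)y^{*}=0$ for every $t\in[0,a]$.

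Finally, I would apply Theorem \ref{Thm4.2}: since the deterministic system \eqref{linear deterc} is approximately controllable on $I$ by hypothesis, the implication there forces $y^{*}=0$. Thus $\ker G(a)=\{0\}$, i.e.\ $G(a)$ is injective. I do not anticipate a genuine obstacle here; the only point requiring a small amount of care is the passage from almost everywhere to everywhere, which the strong continuity of $T^{*}(\cdot)y^{*}$ handles cleanly.
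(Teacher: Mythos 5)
Your proof is correct and follows exactly the route the paper intends: the paper dismisses the lemma as ``immediate from the positivity of the operator $G(a)$,'' and your argument is precisely the standard filling-in of that remark, namely $\langle G(a)y^{*},y^{*}\rangle_{X}=\int_{0}^{a}\norm{B^{*}T^{*}(a-s)y^{*}}_{U}^{2}\,ds=0$ followed by the duality criterion of Theorem \ref{Thm4.2}. The continuity step upgrading ``almost everywhere'' to ``everywhere'' is a sensible small addition that the paper leaves implicit.
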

	The following lemmas are important and a direct consequence of \cite[Lemma 2.2]{MR2046377} and \cite[Lemma 4.4]{MR4104454}
	\begin{lemma}\label{Lem3.1}
		Suppose that $X$ is a separable reflexive Banach space. Then the map $\epsilon I+G(a)$ is invertible for every $\epsilon>0$, and satisfies the following estimates
		\begin{equation*}
			\norm{\epsilon(\epsilon I+G(a))^{-1}y}\le\norm{y},
		\end{equation*}
		for all $y\in X$. 
	\end{lemma}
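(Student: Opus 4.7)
The plan is to prove both claims by showing that $G(a)$ is \emph{accretive} in the Banach-space sense, then deducing invertibility of $\epsilon I+G(a)$ and the norm estimate from that single property. Because $X$ is a general separable reflexive Banach space, the Hilbert-space computation $\langle y,G(a)y\rangle\ge 0$ is not directly available; instead I work with the duality pairing between $X$ and $X^*$ and the normalized duality map $J:X\to X^*$. By Troyanski's renorming theorem I may assume (without loss of generality) that both $X$ and $X^*$ are strictly convex, so $J$ is single-valued, demicontinuous, homogeneous, and satisfies $\langle J(y),y\rangle_{X^*,X}=\|y\|^2$ and $\|J(y)\|_{X^*}=\|y\|$. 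The operators $T^*(t)$ and $B^*$ are interpreted as the Banach adjoints on $X^*$ and with values in $U^*$.

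With this setup, I would verify the monotonicity identity
\begin{equation*}
\langle J(y),G(a)y\rangle_{X^*,X}=\int_0^a\|B^*T^*(a-s)J(y)\|_{U^*}^2\,ds\ge 0,
\end{equation*}
for every $y\in X$, so that $G(a)$ is accretive. Invertibility of $\epsilon I+G(a)$ then follows in two moves. First, pairing against $J(y)$ gives
\begin{equation*}
\|(\epsilon I+G(a))y\|\,\|y\|\ge\langle J(y),(\epsilon I+G(a))y\rangle\ge\epsilon\|y\|^2,
\end{equation*}
so $\|(\epsilon I+G(a))y\|\ge\epsilon\|y\|$, yielding injectivity and closed range. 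Second, the Banach adjoint $\epsilon I+G(a)^*$ on $X^*$ is accretive with respect to the duality map of $X^*$ (valid because $X^{**}=X$), hence also injective; by the closed-range theorem this forces the range of $\epsilon I+G(a)$ to be dense, and combined with closedness, to equal $X$. Thus $(\epsilon I+G(a))^{-1}$ exists as a bounded linear operator on $X$.

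For the norm estimate, given $y\in X$ I set $x:=(\epsilon I+G(a))^{-1}y$, so $\epsilon x+G(a)x=y$. Pairing with $J(x)$ produces
\begin{equation*}
\epsilon\|x\|^2+\langle J(x),G(a)x\rangle_{X^*,X}=\langle J(x),y\rangle_{X^*,X}\le\|J(x)\|_{X^*}\|y\|=\|x\|\|y\|.
\end{equation*}
The middle term on the left is nonnegative by accretivity, so $\epsilon\|x\|^2\le\|x\|\|y\|$, giving $\epsilon\|x\|\le\|y\|$, which is exactly $\|\epsilon(\epsilon I+G(a))^{-1}y\|\le\|y\|$.

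The main obstacle is the accretivity step, since the natural proof in a Hilbert space uses the self-adjoint factorization $G(a)=\Phi\Phi^*$, which has no direct analogue in a Banach space without an inner product. The remedy is to replace the inner product systematically by the $X^*$–$X$ duality pairing and to let the duality map $J$ play the role of the Riesz isomorphism; with strict convexity ensured by renorming, the chain of equalities defining $\langle J(y),G(a)y\rangle$ reduces to an integral of squared norms in $U^*$ and is therefore nonnegative. Once accretivity is established, the invertibility argument and the norm estimate are standard consequences that require no further appeal to Hilbert structure.
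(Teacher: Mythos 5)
The paper does not actually prove this lemma: it is quoted as a direct consequence of \cite[Lemma 2.2]{MR2046377} and \cite[Lemma 4.4]{MR4104454}, whose underlying argument is the Hilbert-space positivity computation $\langle G(a)y,y\rangle_X=\int_0^a\|B^*T^*(a-s)y\|_U^2\,ds\ge 0$ followed by the coercivity estimate $\langle(\epsilon I+G(a))y,y\rangle\ge\epsilon\|y\|^2$. Your overall strategy --- accretivity of $G(a)$, coercivity giving injectivity and closed range, injectivity of the adjoint giving dense range, and the pairing argument for the bound --- is exactly that argument, and in the setting the paper actually works in ($X$ is a Hilbert space throughout, with $\langle\cdot,\cdot\rangle_X$ its inner product, so that $J=I$) your proof is correct and coincides with the cited one.

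Taken at face value in the generality you (and, admittedly, the lemma's own wording) claim --- an arbitrary separable reflexive Banach space --- the proof has a genuine gap at its central step. The identity
\[
\langle J(y),G(a)y\rangle_{X^*,X}=\int_0^a\|B^*T^*(a-s)J(y)\|_{U^*}^2\,ds
\]
does not hold. Expanding the duality pairing against the integrand of \eqref{Controllability gramian} gives
\[
\langle J(y),G(a)y\rangle_{X^*,X}=\int_0^a\bigl\langle B^*T^*(a-s)J(y),\,B^*T^*(a-s)y\bigr\rangle\,ds,
\]
a cross term between $J(y)$ and $y$ that has no definite sign unless $J(y)=y$, i.e.\ unless $X$ is a Hilbert space identified with its dual. (In a genuine non-Hilbert Banach space the formula \eqref{Controllability gramian} only typechecks as a map $X^*\to X$, so even the expression $\epsilon I+G(a)$ requires reinterpretation, e.g.\ as $\epsilon J^{-1}+G(a)$, before one can speak of its inverse on $X$.) A second, smaller defect: the Troyanski renorming is not ``without loss of generality'' here, because the conclusion is a norm estimate with constant exactly $1$ in the \emph{original} norm; after passing to an equivalent strictly convex norm you would only recover $\|\epsilon(\epsilon I+G(a))^{-1}y\|\le C\|y\|$ with the equivalence constant $C$. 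Both issues evaporate if you simply take $X$ to be the Hilbert space it is everywhere else in the paper and run the standard positivity argument there.
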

	
	\begin{lemma}\label{Thm6.0}
		Let $U$ be a separable Hilbert space and $X$ be a Banach space with dual $X$. Assume that the linear control system \eqref{linear deterc}
		is approximately controllable on $I$. Additionally assume that there exists a relatively compact set $K\subset X$, and $x_{\epsilon} \in X$ such that
		\begin{equation}
			x_{\epsilon}\in \epsilon(\epsilon I+G(a))^{-1}(K), ~\epsilon>0,
		\end{equation}
		where $G(a)$ is defined in \eqref{Controllability gramian}.
		Then there is a sequence $\epsilon_n\to 0$ as $n\to \infty$ such that $x_{\epsilon_n}\to 0$ as $n\to \infty$.
	\end{lemma}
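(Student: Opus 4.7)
The plan is to use the relative compactness of $K$ to extract a convergent subsequence of the witnesses, then invoke the uniform contraction estimate of Lemma \ref{Lem3.1} together with the injectivity of $G(a)$ (ensured by approximate controllability via Lemma \ref{Lem4.5}) to drive $x_{\epsilon_n}$ to zero. First I would pick $y_\epsilon \in K$ with $x_\epsilon = \epsilon (\epsilon I + G(a))^{-1} y_\epsilon$, and use compactness of $\overline{K}$ to extract $\epsilon_n \to 0$ such that $y_{\epsilon_n} \to y$ for some $y \in \overline{K}$. The triangle inequality
\begin{equation*}
\norm{x_{\epsilon_n}} \le \norm{\epsilon_n(\epsilon_n I + G(a))^{-1}(y_{\epsilon_n} - y)} + \norm{\epsilon_n(\epsilon_n I + G(a))^{-1} y}
\end{equation*}
splits the task into two pieces. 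By Lemma \ref{Lem3.1}, the first term is bounded by $\norm{y_{\epsilon_n} - y}$ and hence vanishes along the subsequence.

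The substantive step is the second piece: showing $\epsilon(\epsilon I + G(a))^{-1} y \to 0$ as $\epsilon \to 0$ for the \emph{fixed} vector $y$. Here I would exploit that $G(a)$ in \eqref{Controllability gramian} is bounded, self-adjoint and nonnegative, and, by Lemma \ref{Lem4.5}, injective under the approximate controllability hypothesis. Self-adjointness together with $\ker G(a)=\{0\}$ gives $\overline{\operatorname{Ran}(G(a))} = \ker(G(a))^{\perp} = X$, so $\operatorname{Ran}(G(a))$ is dense in $X$. For any $z \in X$, the algebraic identity $(\epsilon I+G(a))^{-1}G(a) = I - \epsilon(\epsilon I+G(a))^{-1}$ gives
\begin{equation*}
\epsilon(\epsilon I + G(a))^{-1} G(a) z = \epsilon z - \epsilon^{2}(\epsilon I + G(a))^{-1} z,
\end{equation*}
whose norm is at most $2\epsilon \norm{z}$ by Lemma \ref{Lem3.1}. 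Hence $\epsilon(\epsilon I + G(a))^{-1}$ vanishes on $\operatorname{Ran}(G(a))$ in the limit; combined with the uniform bound $\norm{\epsilon(\epsilon I+G(a))^{-1}}\le 1$ and density, a standard three-$\epsilon$ argument delivers the pointwise convergence at $y$.

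Putting the two estimates back together yields $\norm{x_{\epsilon_n}} \to 0$, which is the conclusion. The main obstacle I anticipate is the passage from the abstract controllability assumption to the spectral property of $G(a)$: the approximate controllability hypothesis is actually used only once, to conclude injectivity of $G(a)$ via Lemma \ref{Lem4.5}; everything else is routine operator calculus in Hilbert space. A minor care point is that the statement refers to a ``Banach space $X$'', but the Gramian $G(a)$ produced by \eqref{Controllability gramian} and the inner-product structure used for $B^{*}$ require a Hilbert-space setting, which I would read into the hypotheses.
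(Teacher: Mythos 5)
Your proof is correct. The paper itself gives no argument for this lemma — it is stated as a direct consequence of two cited results — and what you have written is precisely the standard proof underlying those citations: the compactness of $\overline{K}$ to reduce to a fixed $y$, the uniform bound $\norm{\epsilon(\epsilon I+G(a))^{-1}}\le 1$ from Lemma \ref{Lem3.1}, and the fact that approximate controllability forces $\epsilon(\epsilon I+G(a))^{-1}\to 0$ strongly, obtained from injectivity (Lemma \ref{Lem4.5}), self-adjointness and nonnegativity of the Gramian, the identity $\epsilon(\epsilon I+G(a))^{-1}G(a)z=\epsilon z-\epsilon^{2}(\epsilon I+G(a))^{-1}z$ on the dense range, and a density argument. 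Your remark that the "Banach space" phrasing in the statement must be read as a Hilbert-space hypothesis is also apt, since the whole argument (and the definition of $G(a)$ via $B^{*}$ and $T^{*}$) lives in the Hilbert setting the paper actually works in.
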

	\section{Concepts of Solutions}
	In this section, we discuss the solution concepts for the hemivariational control problem \eqref{HV1}.

	First of all, let us define the weak solution to the hemivariational control problem \eqref{HV1}.
	\begin{definition}\label{weak hemivariational}
		For fixed $u(\cdot)\in L^2_{\mathbb{F}}(I,U)$, an $X$-valued, $\mathbb{F}$- adapted, continuous stochastic process $q(\cdot)$ is called a weak solution to \eqref{HV1} if
		\begin{itemize}
			\item[(1)] $Bu(\cdot)\in L^1([0,a], X)$ $P$- a.s.and there exists $\sigma(t)\in \Sigma(t,q(t))$ for a.a. $t\in I$ which satisfies
			\begin{equation}
				\mathbb{E}\left[\int_{0}^{a}\norm{\sigma(t)}^2_{\mathcal{L}^2_0}dt\right]<\infty.
			\end{equation}
			\item[(2)] For all $t\in [0,a]$, for every $\xi\in D(A^*)$ we have
			\begin{align*}
				-\langle q(t), \xi\rangle_{X}+&\langle x_0, \xi \rangle_{X}+\int_{0}^{t}\langle q(s), A^*\xi\rangle_{X}ds+\int_{0}^{t}\langle Bu(s), \xi\rangle_{X}ds\\
				+& \int_{0}^{t}\langle \langle \sigma(s), \xi\rangle \rangle_{\mathcal{L}^2_0,X}dW(s)+\int_{0}^{t}F^0(s,q(s); \xi)ds\ge 0,~P ~\text{a.s}..
			\end{align*}
			Here, for $f\in \mathcal{L}^2_0$ and $\xi\in X$ the mapping $\langle \langle f, \xi \rangle \rangle_{\mathcal{L}^2_0,X}:\mathcal{K}\to \mathbb{R}$ is defined by $\langle \langle f, \xi \rangle \rangle_{\mathcal{L}^2_0,X}(v)=\langle f(v), \xi\rangle_{X}$.		
		\end{itemize}
	\end{definition}
	We now reformulate the hemivariational control problem \eqref{HV1} to the following differential inclusion involving Clarke subdifferential:
	\begin{equation}\label{HV2}
		\begin{cases}
			q^{\prime}(t)\in Aq(t)+Bu(t)+\partial F(t,q(t))+\Sigma(t,q(t)))\frac{dW(t)}{dt}  ~~t\in I=[0,a]\\
			q(0)=x_0.
		\end{cases}
	\end{equation}
	In the above, the multimap $\partial F:I\times X\multimap X$ stands for the Clarke subdifferential (see \cite[page 27]{MR1058436}) of a locally Lipschitz function $F(t,\cdot): X\to \mathbb{R}$.
	
	We prove that the solutions of the differential inclusion problem \eqref{HV2} and hemivariational inequality problem \eqref{HV1} are equivalent. Before doing so, we need to introduce the solution concept for the inclusion problem \eqref{HV2}.
	
	In order to define the solution concept, we introduce the measurable selection multimaps $S_{\partial F}: C_{\mathbb{F}}(I, L^2(\Omega, X))\multimap L^2_{\mathbb{F}}(I, X)$ as follows:
	\begin{equation}\label{SF}
		S_{\partial F}(q)=\{f\in  L^2_{\mathbb{F}}(I,X): f(t)\in \partial F(t,q(t))~\text{for a.a.}~t\in I\},~q\in C_{\mathbb{F}}(I,L^2(\Omega,X)),
	\end{equation}
	and  $S_{\Sigma}: C_{\mathbb{F}}(I, L^2(\Omega, X))\multimap L^2_{\mathbb{F}}(I,\mathcal{L}^2_0)$ as follows:
	\begin{equation}\label{Ssigma}
		S_{\Sigma}(q)=\{\sigma\in  L^2_{\mathbb{F}}(I,\mathcal{L}^2_0): \sigma(t)\in \Sigma(t,q(t))~\text{for a.a.}~t\in I\},~q\in C_{\mathbb{F}}(I,L^2(\Omega,X)).
	\end{equation}
	Assuming that the multimaps $S_{\partial F}$ and $S_{\Sigma}$ are well defined, we define the solution concept for the inclusion problem \eqref{HV2}.

	\begin{definition}\label{weak differee}
		For fixed $u(\cdot)\in L^2_{\mathbb{F}}(I,U)$, an $X$-valued, $\mathbb{F}$- adapted, continuous stochastic process $q(\cdot)$ is called a weak solution to \eqref{HV2} if
		\begin{itemize}
			\item[(1)] $Bu(\cdot)\in L^1([0,a], X)$ $P$-a.s.; there exists $\sigma \in S_{\Sigma}$ satisfies
			\begin{equation}
				\mathbb{E}\left[\int_{0}^{a}\norm{\sigma(t)}^2_{\mathcal{L}^2_0}dt\right]<\infty.
			\end{equation}
			\item[(2)] For all $t\in [0,a]$, 
			\begin{align*}
				\langle q(t), \xi\rangle_{X}=&\langle x_0, \xi \rangle_{X}+\int_{0}^{t}\langle q(s), A^*\xi\rangle_{X}ds+\int_{0}^{t}\langle Bu(s)+ f(s), \xi\rangle_{X}ds\\
				+& \int_{0}^{t}\langle \langle \sigma(s), \xi\rangle_{\mathcal{L}^2_0,X}dW(s), P~\text{a.s.},~ \text{for all}~\xi\in D(A^*),
			\end{align*}
			for some $f\in S_{\partial F}(q)$.
		\end{itemize}
	\end{definition}
	\begin{definition}
		For fixed $u(\cdot)\in L^2_{\mathbb{F}}(I,U)$, an $X$-valued, $\mathbb{F}$- adapted, continuous stochastic process $q(\cdot)$ is called a mild solution to \eqref{HV2} if
		\begin{itemize}
			\item[(1)] $Bu(\cdot)\in L^1([0,a], X)$ $P$-a.s.; there exists $\sigma \in S_{\Sigma}$ satisfies
			\begin{equation}
				\mathbb{E}\left[\int_{0}^{a}\norm{\sigma(t)}^2_{\mathcal{L}^2_0}dt\right]<\infty.
			\end{equation}
			\item[(2)] For all $t\in [0,a]$, 
			\begin{equation}
				q(t)=T(t)x_0+\int_{0}^{t}T(t-s)[ f(s)+Bu(s)]ds+\int_{0}^{t}T(t-s)\sigma(s)dW(s), P~\text{a.s},
			\end{equation}
			for some $f\in S_{\partial F}(q)$.
		\end{itemize}
	\end{definition}
	The following theorem gives the equivalence between weak and mild solutions to the differential inclusion problem \eqref{HV2}.
	\begin{theorem}
		An $X$-valued, $\mathbb{F}$- adapted, continuous stochastic process $q(\cdot)$ is called a mild solution to \eqref{HV2} if and only if it is a weak solution to the problem \eqref{HV2}.
	\end{theorem}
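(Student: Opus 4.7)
Both definitions postulate the same pair of selections $f \in S_{\partial F}(q)$ and $\sigma \in S_{\Sigma}(q)$ with the same integrability; the only distinction is whether the defining identity is written in variational ($D(A^*)$-tested) form or in semigroup convolution form. Thus, once such selections are fixed, the question reduces to the classical equivalence of weak and mild solutions for the linear stochastic Cauchy problem
\begin{equation*}
dz(t) = (Az(t) + g(t))\,dt + \sigma(t)\,dW(t),\qquad z(0) = x_0,
\end{equation*}
with drift $g(t) := Bu(t) + f(t)$. I will carry out the two implications separately, keeping $(f, \sigma)$ fixed throughout.

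\textbf{From mild to weak.} Starting from the mild representation, I will pair each summand with an arbitrary $\xi \in D(A^*)$. The semigroup identity $T^*(t)\xi = \xi + \int_0^t T^*(s)A^*\xi\,ds$ on $D(A^*)$ lets me rewrite $\langle T(t)x_0, \xi\rangle_X$ and its analogues for the Bochner and stochastic convolutions. Applying classical Fubini to the drift term and the stochastic Fubini theorem (a consequence of Theorem \ref{TS1} and the $\mathbb{F}$-progressive measurability of $\sigma$) to the noise term, the contributions reorganise into exactly the four summands of the weak identity in Definition \ref{weak differee}.

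\textbf{From weak to mild, and the main obstacle.} Given a weak solution $q$ with selections $(f, \sigma)$, I will introduce the mild candidate
\begin{equation*}
Y(t) := T(t)x_0 + \int_0^t T(t-s)[f(s) + Bu(s)]\,ds + \int_0^t T(t-s)\sigma(s)\,dW(s),
\end{equation*}
which, by the first implication, is itself a weak solution with the same data. The difference $z := q - Y$ is a continuous $\mathbb{F}$-adapted process for which the forcing and noise contributions cancel, leaving the purely deterministic relation
\begin{equation*}
\langle z(t), \xi\rangle_X = \int_0^t \langle z(s), A^*\xi\rangle_X\,ds, \qquad t \in [0,a],\ \xi \in D(A^*),
\end{equation*}
$P$-almost surely. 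To conclude $z \equiv 0$, I will fix $t_0 \in I$ and $\eta \in D(A^*)$ and use the deterministic, continuously differentiable curve $\psi(s) := T^*(t_0 - s)\eta$ (with $\psi'(s) = -A^*\psi(s)$) as a time-dependent test function; the classical product rule applied to $s \mapsto \langle z(s), \psi(s)\rangle_X$ makes the two terms cancel, giving $\langle z(t_0), \eta\rangle_X = 0$, and density of $D(A^*)$ in $X$ yields $z(t_0) = 0$. I expect the main technical obstacle to be the rigorous justification of stochastic Fubini for the $T^*$-paired stochastic convolution in the first implication, together with the product-rule step above; both can be handled cleanly by first performing the argument for the Yosida approximations $A_n = nA(n - A)^{-1}$, where weak and mild forms coincide trivially, and then passing to the limit using the contraction estimates of Theorems \ref{TS1} and \ref{stochastic convolution estimate}.
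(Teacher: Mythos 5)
The paper states this theorem without any proof at all --- it appears between Definition \ref{weak differee} and the discussion linking \eqref{HV2} to \eqref{HV1}, and the authors immediately move on --- so there is no in-paper argument to compare yours against. Your proposal supplies the standard Da Prato--Zabczyk-type argument, and its outline is correct: the forward direction via pairing the mild formula with $\xi\in D(A^*)$, the identity $T^*(t)\xi=\xi+\int_0^t T^*(s)A^*\xi\,ds$, and classical plus stochastic Fubini; the converse via subtracting the mild candidate $Y$ and proving uniqueness for the homogeneous weak equation with the test curve $\psi(s)=T^*(t_0-s)\eta$. Two technical points deserve explicit mention if you write this out in full. First, in the converse direction the exceptional $P$-null set in the weak identity a priori depends on the pair $(t,\xi)$; you need the continuity of $z$ together with separability of $D(A^*)$ in the graph norm to extract a single null set off which the relation holds for all $t$ and all $\xi$ simultaneously, before any partition or Yosida argument can be run pathwise. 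Second, the product-rule step should be done either by the Riemann-sum telescoping trick (splitting each increment of $\langle z(s),\psi(s)\rangle_X$ into a $z$-increment against $\psi(s_{i+1})$ and a $\psi$-increment against $z(s_i)$, using that $A^*\psi(s)=T^*(t_0-s)A^*\eta$ is continuous in $s$) or by the Yosida regularization you mention; either route closes the gap. With these details filled in, the proof is complete and is, as far as one can tell, exactly the argument the authors are implicitly invoking.
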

	We now prove that the weak (and hence mild) solutions to the differential inclusion problem \eqref{HV2} are the weak solutions to the stochastic hemivariational control problem \eqref{HV1}. Let $q$ be a weak solution to the differential inclusion problem \eqref{HV2}. Then, from Definition \eqref{weak differee}, it follows that 
	\begin{align}\label{G}
		\langle q(t), \xi\rangle_{X}=&\langle x_0, \xi \rangle_{X}+\int_{0}^{t}\langle q(s), A^*\xi\rangle_{X}ds+\int_{0}^{t}\langle Bu(s)+ f(s), \xi\rangle_{X}ds\\
		+& \int_{0}^{t}\langle \langle \sigma(s), \xi\rangle_{\mathcal{L}^2_0,X}dW(s), ~\text{a.s}
		\notag
	\end{align}
	for some $f\in S_{\partial F}(q), \sigma\in S_{\Sigma}(q)$ and for all $\xi\in D(A^*)$. Now, from $f\in S_{\partial F}(q)$ we obtain $f(t)\in \partial F(t,q(t))$ for a.a. $t\in I$.  The definition of Clarke subdifferential permits us to write \begin{equation}
		\langle f(t), \xi\rangle_{X}\le F^0(t,q(t);  \xi) ~\text{for all}~\xi\in X.
	\end{equation}
	Integrating from $0$ to $t$ we obtain from above
	\begin{equation}
		\int_{0}^{t}\langle f(s), \xi\rangle_{X}ds\le \int_{0}^{t}F^0(s,q(s);  \xi)ds ~\text{for all}~\xi\in X.
	\end{equation}
	Hence, observing this inequality, we conclude from \eqref{G} that 
	\begin{align*}
		-\langle q(t), \xi\rangle_{X}+&\langle x_0, \xi \rangle_{X}+\int_{0}^{t}\langle q(s), A^*\xi\rangle_{X}ds+\int_{0}^{t}\langle Bu(s), \xi\rangle_{X}ds+\int_{0}^{t}F^0(s,q(s);  \xi)ds\\
		+& \int_{0}^{t}\langle \langle \sigma(s), \xi\rangle \rangle_{\mathcal{L}^2_0,X}dW(s)\ge 0, P~\text{a.s},~\text{for all}~\xi\in D(A^*).
	\end{align*}
	By means of Definition \ref{weak hemivariational}, it follows that $q$ is a weak solution to the hemivariational inequality problem \eqref{HV1}.
	Based on this discussion, we focus our attention on the differential inclusion problem \eqref{HV2} to deduce the approximate controllability result for the hemivariational control problem \eqref{HV1}.

	We assume the following hypotheses:
	\begin{itemize}
		\item[($\mathcal{T}$)] The operator $A:D(A)\subset X\to X$ generates a strongly continuous semigroup $\{T(t)\}_{t\ge 0}$ of bounded linear operators $T(t): X\to X$ and $T(t)$ is compact for $t>0$. Moreover, there exists a constant $M>0$ such that
		\begin{equation}
			\norm{T(t)}_{\mathcal{L}(X)}\le M, ~~0\le t\le a.
		\end{equation}
		
		\item[(B)] The control operator $B:U\to X$ is invertible. 
		\item[($\sigma$)] The mapping $\Sigma: I\times X\to \mathcal{L}^2_0$ has closed bounded and convex values and satisfies the following conditions:
		\begin{itemize}
			\item[($\sigma$1)] $\Sigma(\cdot,x)$ has a measurable selection for every $x\in X$, that is there exists a measurable function $\sigma: [0,a]\to \mathcal{L}^2_0$  such that $\sigma(t)\in \Sigma(t,x)$ for a.a. $t\in [0,a]$;
			\item[($\sigma$2)]$\Sigma(t,\cdot)$ is weakly sequentially closed for a.a $t\in [0,a]$, that is, it has a weakly sequentially closed graph;
			\item[($\sigma$3)] there exists $\zeta\in L^1(I,\mathbb{R}^+)$ such that for each $x\in X$
			\begin{equation}
				\norm{\Sigma(t,x)}^2_{\mathcal{L}^2_0}\le \zeta(t),~\text{for a.a.}~t\in [0,a].
			\end{equation}
		\end{itemize}
	\end{itemize} 
	
	We also assume that the function $F: I\times X\to \mathbb{R}$ satisfies the following Hypotheses.
	\begin{itemize}
		\item[(F1)] the function $t\mapsto F(t,x)$ is measurable for all $x\in X$.
		\item[(F2)] the function $x\mapsto F(t,x)$ is locally Lipschitz continuous for a.a. $t\in I$.
		\item[(F3)] there exists a function $\eta\in L^1(I, \mathbb{R}^+)$ such that
		\begin{equation}
			\norm{\partial F(t,x)}^2=\sup\{\norm{z}^2_{X}: z\in \partial F(t,x)\}\le \eta(t),~ t\in I, x\in X.
		\end{equation}
	\end{itemize}
	
	\begin{remark}
		Based on the assumption (B), we infer that the linear deterministic control system  \eqref{linear deterc} is approximately controllable in $I$. Moreover, by virtue of Theorem \ref{stochastic approximate controllability}, the stochastic control system \eqref{linear stochastic} is approximately controllable in $I$.
	\end{remark}
	\begin{remark}
		Note that under Hypotheses (F1)-(F3) by virtue of Lemmas \ref{Lemma 2.14} and \ref{Lemma 2.15} the multimap $G:I\times X\multimap X$ given by $G(t,x)=\partial F(t,x)$ satisfies the following Hypotheses (G1)-(G3):
		\begin{itemize}
			\item[(G1)] the multimap $G$ has nonempty, convex and weakly compact values; and the multimap $G(\cdot,x): I\multimap X$ is measurable for all $x\in X$.
			\item[(G2)] the multimap $G(t,\cdot): X\multimap X$ has strongly weakly closed graph, that is if $x_n\to x$ in $X$, $y_n\rightharpoonup y$ in $X$ with $y_n\in G(t,x_n)$, then $y\in G(t,x)$.
			\item[(G3)] there exists a function $\eta\in L^1(I,\mathbb{R}^+)$ such that
			\begin{equation}
				\norm{G(t,x)}=\sup\{\norm{y}_{X}: y\in G(t,x)\}\le \eta(t), ~\text{a.a.}~t\in I, x\in X.
			\end{equation} 
		\end{itemize}
	\end{remark}
	We now recall the measurable selection multimaps considered in \eqref{SF} and \eqref{Ssigma}. The following results discuss the well-posedness of these multimaps and some properties that are satisfied by them.

	The following Theorem says that the set $S_G(q)$ is nonempty for every $q\in C(I, X)$ and delivers some property.
	\begin{theorem}\cite[Lemma 5.3]{MR2976197}\label{Thm3.2}
		Suppose the multivalued map $G:I\times X \multimap X^*$ satisfies the hypotheses (G1)-(G3).
		Then, the following is true.
		\begin{itemize}
			\item[(i)] the multifunction $S_G$ has nonempty and weakly compact convex values.
			\item[(ii)] the multimap $S_G$ has strongly weakly closed graph in the following sense: suppose $g_n\in S_G(q_n)$ with $g_n\rightharpoonup g$ in $L^1(I, X^*)$, $q_n\to q$ in $C(I, X)$, then $g\in S_G(q)$. 
		\end{itemize} 
	\end{theorem}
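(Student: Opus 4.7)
The plan is to treat parts (i) and (ii) together: nonemptiness of $S_G(q)$ follows from a standard measurable selection argument, while the weak compactness in (i) and the strong--weak closed-graph property in (ii) both reduce, via Mazur's lemma, to a single pointwise closure argument that exploits (G2) through support-functional inequalities.

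For part (i), I would fix $q \in C(I, X)$ and consider the composite multifunction $t \mapsto G(t, q(t))$. By (G1) together with the continuity of $q$, this multifunction is measurable and has nonempty closed convex values, so the Kuratowski--Ryll-Nardzewski selection theorem produces a measurable selector $g$; hypothesis (G3) bounds $\|g(t)\|_{X^*} \le \eta(t)$ with $\eta \in L^1$, putting $g \in L^1(I, X^*)$ and proving $S_G(q) \neq \emptyset$. Convexity of $S_G(q)$ is inherited pointwise from (G1). For weak compactness, the uniform bound $\|f(t)\|_{X^*} \le \eta(t)$ valid for every $f \in S_G(q)$ makes $S_G(q)$ bounded and equi-integrable, hence relatively weakly compact in $L^1(I, X^*)$ by the Dunford--Pettis theorem (using reflexivity of $X^*$). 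Weak closedness is then the special case $q_n \equiv q$ of part (ii).

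For part (ii), given $g_n \in S_G(q_n)$ with $g_n \rightharpoonup g$ in $L^1(I, X^*)$ and $q_n \to q$ in $C(I, X)$, Mazur's lemma provides convex combinations $\widetilde g_n = \sum_{k \ge n} \alpha_{n,k} g_k$ converging to $g$ strongly in $L^1(I, X^*)$. Passing to a subsequence, $\widetilde g_n(t) \to g(t)$ in $X^*$ and $q_n(t) \to q(t)$ in $X$ for almost every $t$. Fix such a good $t$. Hypotheses (G1)--(G3) together make $G(t, \cdot)$ upper semicontinuous from the strong topology of $X$ into the weak topology of $X^*$ (convex weakly compact values, strong--weak closed graph, and local boundedness in a reflexive dual). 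Consequently, for every $\xi \in X$ the support function satisfies $\limsup_{k} \sigma_{G(t, q_k(t))}(\xi) \le \sigma_{G(t, q(t))}(\xi)$, and since $g_k(t) \in G(t, q_k(t))$,
\[
\langle \xi, \widetilde g_n(t) \rangle \le \sup_{k \ge n} \sigma_{G(t, q_k(t))}(\xi).
\]
Letting $n \to \infty$ yields $\langle \xi, g(t)\rangle \le \sigma_{G(t, q(t))}(\xi)$ for all $\xi \in X$, and since $G(t, q(t))$ is weakly closed and convex, this support-function inequality forces $g(t) \in G(t, q(t))$ by Hahn--Banach, completing the argument.

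The chief obstacle is the pointwise step in (ii): the Mazur combinations $\widetilde g_n(t)$ are built from selectors $g_k(t)$ lying in \emph{different} sets $G(t, q_k(t))$, so the strong--weak closed graph (G2) cannot be invoked on them directly. The support-functional detour circumvents this by replacing set membership with scalar inequalities that are preserved both under convex combinations and under the passage to strong (hence weak) limits in $X^*$; the pointwise weak compactness supplied by (G3) and reflexivity is what allows the support function to carry the full information about $G(t, q(t))$.
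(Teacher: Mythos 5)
Your proposal is correct, but note that the paper offers no proof of this statement to compare against: it is quoted verbatim from the cited reference (Lemma 5.3 of Mig\'orski--Ochal--Sofonea), and your argument is essentially a faithful reconstruction of the standard proof given there --- Kuratowski--Ryll-Nardzewski plus (G3) and Dunford--Pettis (with the pointwise weak compactness of the values) for part (i), and Mazur's lemma combined with the upper semicontinuity of the support functions $x\mapsto \sigma_{G(t,x)}(\xi)$ (which follows from the strong--weak closed graph together with the common weakly compact bound $\norm{G(t,x)}\le\eta(t)$ in the reflexive dual) for part (ii). The only step you pass over quickly is the superpositional measurability of $t\mapsto G(t,q(t))$: measurability of $G(\cdot,x)$ for fixed $x$ plus continuity of $q$ does not by itself give measurability of the composition, and one should insert the usual approximation of $q$ by step functions followed by the closed-graph property in the second variable; this is routine and does not affect the correctness of the overall argument.
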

	\begin{lemma}\label{stochastic measurable selection}\cite{MR3750671}
		Assume that the multimap $\Sigma$ satisfies the assumption ($\sigma$). Then
		\begin{itemize}
			\item[(i)] the set $S_{\Sigma}(q)$ is nonempty for every $q\in C_{\mathbb{F}}(I,L^2(\Omega,X))$.
			\item[(ii)] the multimap $S_{\Sigma}$ has a strongly-weakly closed graph with convex and weakly compact values.
		\end{itemize} 
	\end{lemma}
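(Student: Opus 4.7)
The plan is to split the verification into three parts: nonemptiness of $S_\Sigma(q)$, convexity and weak compactness of its values, and the strongly--weakly closed graph property. The first two follow from standard measurable selection and reflexivity arguments combined with the growth bound~($\sigma$3), while the last is the main technical step, handled by Mazur's theorem together with a pointwise limit-identification argument.

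For part (i), fix $q \in C_{\mathbb{F}}(I,L^2(\Omega,X))$. Passing to an $\mathbb{F}$-progressively measurable representative of $q$, I would first verify that the composed multimap $(t,\omega) \mapsto \Sigma(t, q(t,\omega))$ takes closed convex values in the separable Hilbert space $\mathcal{L}^2_0$ and is jointly measurable. Measurability is obtained by approximating $q(t,\omega)$ by $\mathbb{F}$-adapted simple functions, using~($\sigma$1) on each piece, and passing to the limit via~($\sigma$2) and closedness of the values. The Kuratowski--Ryll-Nardzewski selection theorem then produces a measurable selection $\sigma$, and progressive measurability follows from a standard regularization. The growth bound~($\sigma$3) gives $\mathbb{E}\int_0^a \|\sigma(t)\|^2_{\mathcal{L}^2_0}\,dt \le \int_0^a \zeta(t)\,dt < \infty$, showing $S_\Sigma(q)$ is nonempty.

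For convex and weakly compact values in part (ii), convexity of $S_\Sigma(q)$ is immediate from the pointwise convexity of $\Sigma(t, q(t,\omega))$. The bound~($\sigma$3) also provides a uniform $L^2$-bound on $S_\Sigma(q)$, so $S_\Sigma(q)$ is norm-bounded in the reflexive Hilbert space $L^2_{\mathbb{F}}(I,\mathcal{L}^2_0)$. Strong closedness in $L^2$ follows by extracting a pointwise a.e.\ convergent subsequence of any norm-convergent sequence and using closedness of $\Sigma(t, q(t,\omega))$; reflexivity together with convexity then gives weak closedness, which combined with boundedness yields weak compactness.

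The main obstacle is the strongly--weakly closed graph. Suppose $q_n \to q$ in $C_{\mathbb{F}}(I, L^2(\Omega, X))$ and $\sigma_n \in S_\Sigma(q_n)$ with $\sigma_n \rightharpoonup \sigma$ in $L^2_{\mathbb{F}}(I,\mathcal{L}^2_0)$; the goal is $\sigma \in S_\Sigma(q)$. I would first pass to a subsequence so that $q_n(t,\omega) \to q(t,\omega)$ for a.e.\ $(t,\omega)$ (using Fubini after extracting time-pointwise $L^2(\Omega,X)$-convergence). Next, by Mazur's theorem, there exist convex combinations $\tilde\sigma_N = \sum_{n \ge N} \lambda_n^N \sigma_n$ with $\tilde\sigma_N \to \sigma$ strongly in $L^2$, and a further subsequence gives $\tilde\sigma_N(t,\omega) \to \sigma(t,\omega)$ pointwise a.e. The delicate final step identifies $\sigma(t,\omega)$ as an element of $\Sigma(t, q(t,\omega))$: for a.e.\ $(t,\omega)$, the sequence $(\sigma_n(t,\omega))$ is bounded by $\sqrt{\zeta(t)}$ in $\mathcal{L}^2_0$, and~($\sigma$2) together with $q_n(t,\omega) \to q(t,\omega)$ shows every weak cluster point of $(\sigma_n(t,\omega))$ lies in the closed convex set $\Sigma(t, q(t,\omega))$. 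A standard set-convergence argument---the strong limit of convex combinations drawn from tails lies in the closed convex hull of the weak-sequential upper limit of the sets---then yields $\sigma(t,\omega) \in \Sigma(t, q(t,\omega))$, completing the proof.
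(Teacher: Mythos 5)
The paper does not actually prove this lemma: it is imported wholesale by citation, so there is no in-house argument to compare yours against. Judged on its own, your reconstruction follows the standard route for such selection lemmas and is essentially sound. Convexity of $S_\Sigma(q)$ from pointwise convexity, weak compactness from boundedness via ($\sigma$3) plus strong closedness plus reflexivity, and the strongly--weakly closed graph via Mazur convex combinations of tails together with the ``convergence theorem'' identification $\sigma(t,\omega)\in\overline{\operatorname{conv}}\bigl(w\text{-}\limsup_n\Sigma(t,q_n(t,\omega))\bigr)\subset\Sigma(t,q(t,\omega))$ are all correct; the pointwise weak cluster points are controlled by the $\sqrt{\zeta(t)}$ bound and ($\sigma$2), exactly as needed.

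The one step that does not go through as written is the appeal to Kuratowski--Ryll-Nardzewski in part (i). Hypothesis ($\sigma$1) only asserts that for each \emph{fixed} $x$ the multimap $t\mapsto\Sigma(t,x)$ admits a measurable selection; it does not assert measurability (even weak measurability) of $t\mapsto\Sigma(t,x)$, let alone joint measurability of $(t,\omega)\mapsto\Sigma(t,q(t,\omega))$, and ``passing to the limit via ($\sigma$2)'' does not manufacture measurability of the composed multimap. So KRN has no licence to be invoked here. Fortunately you do not need it: the other route you sketch in the same paragraph --- approximate $q$ by adapted step functions $q_n$, patch together the selections from ($\sigma$1) on each piece to get $\sigma_n(t)\in\Sigma(t,q_n(t))$, use ($\sigma$3) to extract a weak $L^2$-limit $\sigma$, and then run your own part-(iii) Mazur argument to conclude $\sigma(t)\in\Sigma(t,q(t))$ a.e. --- already yields nonemptiness. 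Reorganize part (i) around that argument (which makes (i) a corollary of the closed-graph machinery applied to the approximating sequence) and delete the KRN sentence, and the proof is complete.
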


	\section{Approximate Controllability Results}
	As pointed out in the previous section, the mild (and hence weak solutions) to the differential inclusion problem \eqref{HV2} are the weak solutions of the hemivariational control problem \eqref{HV1}. Therefore, we focus on the differential inclusion problem \eqref{HV2} to deduce the approximate controllability of the hemivariational control problem \eqref{HV1}.

	The approximate controllability result for the Problem \eqref{HV2} is given by the following theorem.
	\begin{theorem}\label{MR}
		Assume the Hypotheses ($\mathcal{T}$), ($\sigma$) and (B) hold. Further, assume that the nonlinear multimap $G$ satisfies Hypotheses (G1)-(G3). Then, the problem \eqref{HV2} is approximately controllable in $I$. 
	\end{theorem}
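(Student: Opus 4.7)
The plan is to reformulate the problem as a multivalued fixed point problem on $C_{\mathbb{F}}(I,L^2(\Omega,X))$ and then pass to the limit $\epsilon\to 0$ using Lemma \ref{Thm6.0}. By Hypothesis (B) and Theorem \ref{stochastic approximate controllability}, the associated linear stochastic system is approximately controllable, and by Lemma \ref{Lem4.5} the controllability Gramian $G(a)$ is injective, so $(\epsilon I + G(a))^{-1}$ exists for every $\epsilon>0$ with the bound of Lemma \ref{Lem3.1}. Since weak solutions of \eqref{HV2} are weak solutions of \eqref{HV1}, it suffices to produce, for every $\epsilon>0$, a mild solution $q_\epsilon$ of \eqref{HV2} whose terminal value approximates $z$.

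For $\epsilon>0$, $q\in C_{\mathbb{F}}(I,L^2(\Omega,X))$, and selections $f\in S_{\partial F}(q)$, $\sigma\in S_{\Sigma}(q)$, I define the feedback control
\begin{equation*}
u_\epsilon(t;q,f,\sigma) = B^{*}T^{*}(a-t)(\epsilon I + G(a))^{-1}\,p(q,f,\sigma),
\end{equation*}
with
\begin{equation*}
p(q,f,\sigma) = z - T(a)x_0 - \int_0^a T(a-s)f(s)\,ds - \int_0^a T(a-s)\sigma(s)\,dW(s).
\end{equation*}
Using (G3), ($\sigma$3), Theorem \ref{stochastic convolution estimate} and Lemma \ref{Lem3.1}, one checks $u_\epsilon\in L^2_{\mathbb{F}}(I,U)$. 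I then introduce the multivalued solution operator $\Gamma_\epsilon:C_{\mathbb{F}}(I,L^2(\Omega,X))\multimap C_{\mathbb{F}}(I,L^2(\Omega,X))$ by letting $y\in\Gamma_\epsilon(q)$ iff there exist $f\in S_{\partial F}(q)$, $\sigma\in S_{\Sigma}(q)$ with
\begin{equation*}
y(t) = T(t)x_0 + \int_0^t T(t-s)\bigl[f(s)+Bu_\epsilon(s;q,f,\sigma)\bigr]ds + \int_0^t T(t-s)\sigma(s)\,dW(s).
\end{equation*}

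Next I apply the Kakutani--Ky Fan type fixed point theorem (Theorem \ref{fixed}). First I derive an a priori bound, through Theorem \ref{stochastic convolution estimate} and the estimates on $f$, $\sigma$, $u_\epsilon$, so that some closed ball $B_r$ in $C_{\mathbb{F}}(I,L^2(\Omega,X))$ is invariant under $\Gamma_\epsilon$. Convexity of the values is inherited from the convex values of $\partial F$ and $\Sigma$ through the linear dependence of $u_\epsilon$ on $(f,\sigma)$. For compactness I exploit Hypothesis ($\mathcal{T}$): compactness of $T(t)$ for $t>0$, combined with equicontinuity in $t$ and a standard Hausdorff measure of noncompactness argument, yields that $\Gamma_\epsilon(B_r)$ is relatively compact in $C_{\mathbb{F}}(I,L^2(\Omega,X))$; restricting to $K:=\overline{\mathrm{conv}}\,\Gamma_\epsilon(B_r)\cap B_r$ gives a compact convex invariant set. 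Upper semicontinuity (equivalently, closedness of graph on $K$) is obtained by combining Theorem \ref{Thm3.2}(ii) and Lemma \ref{stochastic measurable selection}(ii): along any sequence $q_n\to q$ in $C_{\mathbb{F}}$ with $y_n\in \Gamma_\epsilon(q_n)$, a subsequence of the corresponding selections $(f_n,\sigma_n)$ converges weakly to some $(f,\sigma)\in S_{\partial F}(q)\times S_{\Sigma}(q)$, and I pass to the limit in every term of $y_n$.

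Having obtained a fixed point $q_\epsilon\in\Gamma_\epsilon(q_\epsilon)$ with corresponding $(f_\epsilon,\sigma_\epsilon)$, a direct computation gives
\begin{equation*}
q_\epsilon(a) - z = -\epsilon(\epsilon I + G(a))^{-1} p(q_\epsilon,f_\epsilon,\sigma_\epsilon).
\end{equation*}
The bounds on $f_\epsilon$, $\sigma_\epsilon$ from (G3), ($\sigma$3) together with compactness of $T(t)$ for $t>0$ show that the family $\{p(q_\epsilon,f_\epsilon,\sigma_\epsilon)\}_{\epsilon>0}$ lies in a relatively compact subset $K\subset L^2(\Omega,X)$. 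Lemma \ref{Thm6.0} then yields a sequence $\epsilon_n\to 0$ along which $\mathbb{E}\|q_{\epsilon_n}(a)-z\|^2\to 0$, which is the desired approximate controllability. The main obstacle I anticipate is verifying the upper semicontinuity/closed-graph property of $\Gamma_\epsilon$: passing to the limit in the stochastic convolution $\int_0^t T(t-s)\sigma_n(s)\,dW(s)$ when $\sigma_n\rightharpoonup\sigma$ requires a Vitali-type uniform integrability argument together with the strongly--weakly closed graph of $S_\Sigma$, and a parallel argument is needed for the Bochner integral $\int_0^t T(t-s)f_n(s)\,ds$ with $f_n\rightharpoonup f$; equally delicate is establishing relative compactness of $\{p(q_\epsilon,f_\epsilon,\sigma_\epsilon)\}$ in $L^2(\Omega,X)$, which forces a careful singular-kernel decomposition near $s=a$ when using compactness of $T(a-s)$.
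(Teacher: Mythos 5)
Your proposal follows essentially the same route as the paper's proof: the same feedback control $u_\epsilon$ built from $(\epsilon I+G(a))^{-1}$ applied to $p(q,f,\sigma)$ (the paper's $Z(g)$), the same multivalued solution operator $\Gamma_\epsilon$, the same Ky Fan fixed point theorem with invariance, compactness via compactness of the semigroup, and closed graph via the strongly--weakly closed graphs of $S_{\partial F}$ and $S_{\Sigma}$, and the same terminal identity $q_\epsilon(a)-z=-\epsilon(\epsilon I+G(a))^{-1}p(q_\epsilon,f_\epsilon,\sigma_\epsilon)$ combined with Lemma \ref{Thm6.0}. The only cosmetic difference is that you invoke a Hausdorff measure-of-noncompactness argument where the paper directly uses its appendix compactness lemmas for the deterministic and stochastic convolutions.
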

	\begin{proof}
		Let $z\in X$ be the desired target we want to achieve in time $a$. Fix $\epsilon>0$. Define a multimap $\Gamma_{\epsilon}:  C_{\mathbb{F}}(I, L^2(\Omega, X))\multimap C_{\mathbb{F}}(I, L^2(\Omega, X))$ as follows: for $q\in  C_{\mathbb{F}}(I, L^2(\Omega, X))$, $y\in \Gamma_{\epsilon}(q)$ satisfies
		\begin{equation}\label{CY1}
			y(t)=T(t)x_0+\int_{0}^{t}T(t-s)[ g(s)+Bu(s)]ds+\int_{0}^{t}T(t-s)\sigma(s)dW(s),~ P~\text{a.s.},
		\end{equation}   
		where $g\in S_G(q)$, $\sigma\in S_{\Sigma}(q)$ and $u\in L^2_{\mathbb{F}}(I,U)$ is given by
		\begin{equation}\label{CY2}
			u(t)=B^*T^*(a-t)((\epsilon I+G(a))^{-1}Z(g)),
		\end{equation}
		where 
		\begin{equation}\label{CY33}
			Z(g)=z-T(a)x_0-\int_{0}^{a}T(a-s) g(s)ds-\int_{0}^{a}T(a-s)\sigma(s)dW(s).
		\end{equation}
		It is obvious that the process $T(\cdot)x_0$ is $\mathbb{F}$- adapted and the map $t\mapsto T(t)x_0$ is continuous. Thus, the process $T(\cdot)x_0$ belongs to $C_{\mathbb{F}}(I, L^2(\Omega, X))$. Also, by virtue of Proposition \ref{continuity of deterministic convolution} (given in the Appendix), the deterministic convolution $\displaystyle \int_{0}^{t}T(t-s) g(s)ds$ is well defined and $X$-valued, continuous and $\mathbb{F}$- adapted process. Moreover, by virtue of Holder's inequality, we compute
		\begin{align*}
			\sup_{t\in [0, a]}\mathbb{E}\norm{\int_{0}^{t}T(t-s) g(s)ds}_X^2	
			\le&\mathbb{E}\sup_{t\in [0, a]}\norm{\int_{0}^{t}T(t-s) g(s)ds}_X^2\\
			\le& aE \sup_{t\in [0, a]}\int_{0}^{t}\norm{T(t-s) g(s)}_X^2ds\\
			\le& aE \sup_{t\in [0, a]}\int_{0}^{t}\norm{T(t-s)}_{\mathcal{L}(X)}^2\norm{ g(s)}^2_Xds\\
			\le & aM^2 \mathbb{E}\sup_{t\in [0, a]}\int_{0}^{t}\norm{g(s)}_X^2ds\\
			\le & aM^2 \mathbb{E}\int_{0}^{a}\eta(s)ds<\infty.
		\end{align*}
		Thus, we conclude that the deterministic convolution $\int_{0}^{t}T(t-s) g(s)ds\in C_{\mathbb{F}}(I, L^2(\Omega, X))$. Similarly, the deterministic convolution $\int_{0}^{t}T(t-s)Bu(s)ds\in C_{\mathbb{F}}(I, L^2(\Omega, X))$. 
		
		We now show that the stochastic integral $\displaystyle \int_{0}^{t}T(t-s)\sigma(s)dW(s)$ is well-defined. It is obvious from the properties of $\sigma$ that the process $\sigma(s), ~s\in [0,t]$ is $\mathcal{L}^2_0$-valued, measurable process with
		\begin{equation}
			\mathbb{E}\int_{0}^{a}\norm{\sigma(s)}_{\mathcal{L}^2_0}^2ds\le \int_{0}^{a}\zeta(s)ds<\infty.
		\end{equation}
		Therefore, with the help of Proposition \ref{stochastic convolution estimate},
		\begin{align*}
			\sup_{t\in [0, a]} \mathbb{E}\norm{\int_{0}^{t}T(t-s)\sigma(s)dW(s)}_X^2\le & \mathbb{E}\sup_{t\in [0, a]} \norm{\int_{0}^{t}T(t-s)\sigma(s)dW(s)}_X^2\\
			\le & \mathbb{E}\int_{0}^{a}\norm{\sigma(s}_{\mathcal{L}^2_0}^2ds<\infty.
		\end{align*}
		Therefore, the stochastic integral $\displaystyle \int_{0}^{t}T(t-s)\sigma(s)dW(s)$ exists. Moreover, by Lemma \ref{stochastic convolution}, the stochastic integral is a $X$-valued continuous process. 
		Consequently, the multimap $\Gamma_{\epsilon}$ is well defined.

		We now prove that the multimap $\Gamma_{\epsilon}$ has a fixed point for each $\epsilon>0$. We apply the Fixed Point Theorem \ref{fixed} in this regard. We deduce this in several steps.\\

		\textbf{(STEP-I)} 	We now prove the map $\Gamma_{\epsilon}$ maps the set 
		\begin{equation}
			B_R=\{q\in  C_{\mathbb{F}}(I, L^2(\Omega, X)): \norm{q}_{ C_{\mathbb{F}}(I, L^2(\Omega, X))}\le R\},~R>0
		\end{equation}
		into the set $B_{N_0}$ for some $N_0$. Let $y\in \Gamma_{\epsilon}(q), q\in B_R$. Then $y$ satisfies \eqref{CY1}-\eqref{CY33}. We now estimate
		\begin{align*}
			&\sup_{t\in [0,a]}\mathbb{E}\norm{y(t)}_X^2
			\le
			\mathbb{E}\sup_{t\in [0,a]}\norm{y(t)}_X^2\\
			\le & 4\left(\mathbb{E}\sup_{t\in [0, a]}\norm{T(t)x_0}_X^2+	\mathbb{E}\sup_{t\in [0, a]}\norm{\int_{0}^{t}T(t-s) g(s)ds}_X^2\right.\\
			&\left.+\mathbb{E}\sup_{t\in [0, a]}\norm{\int_{0}^{t}T(t-s)Bu(s)ds}_X^2+	\mathbb{E}\sup_{t\in [0, a]}\norm{\int_{0}^{t}T(t-s)\sigma(s)dW(s)}_X^2\right)\\
			\le & 4\left(M^2E \norm{x_0}_X^2+ aM^2 \mathbb{E}\int_{0}^{a}\norm{ g(s)}_X^2ds\right.\\
			+& \left.aM^2\norm{B}^2 \mathbb{E}\int_{0}^{a}\norm{u(s)}_U^2ds+K_a\mathbb{E}\int_{0}^{a}\norm{\sigma(s)}_{\mathcal{L}^2_0}^2ds\right)\\
			\le & 4\left(M^2E \norm{x_0}_X^2+ aM^2 \mathbb{E}\int_{0}^{a}\eta(s)ds
			+aM^2\norm{B}^2 \mathbb{E}\int_{0}^{a}\norm{u(s)}_U^2ds+K_a\int_{0}^{a}\zeta(s)ds\right).
		\end{align*}
		Using the expression \eqref{CY2} 
		\begin{align*}
			\mathbb{E}\norm{u(s)}_U^2
			=&\mathbb{E}\norm{B^*T^*(a-s)\left((\epsilon I+G(a))^{-1}Z(g)\right)}^2\\
			\le &\mathbb{E}\left[ \frac{1}{\epsilon}M\norm{B}\norm{\left(\epsilon(\epsilon I+G(a))^{-1}Z(g)\right)}\right]^2\\
			= &\mathbb{E}\left[ \frac{1}{\epsilon}M\norm{B}\norm{\epsilon(\epsilon I+G(a))^{-1}Z(g)}\right]^2\\
			\le &\left[ \frac{1}{\epsilon}M\norm{B}\right]^2\mathbb{E}\norm{Z(g)}^2.
		\end{align*}
		In the last inequality, we use Lemma \ref{Lem3.1}.
		Also, we have,
		\begin{equation}\label{control estimate}
			\mathbb{E}\int_{0}^{a}\norm{u(s)}_U^2ds\le a \left[ \frac{1}{\epsilon}M\norm{B}\right]^2\mathbb{E}\norm{Z(g)}^2.
		\end{equation}
		Keeping the expression \eqref{CY33} in mind we estimate
		\begin{align}\label{estimate of zg}
			\mathbb{E}\norm{Z(g)}^2\le& 4\left[ \mathbb{E}\norm{z}^2+M^2\mathbb{E}\norm{x_0}^2+M^2a\int_{0}^{a}\eta(s)ds+K_a\int_{0}^{a}\zeta(s)ds\right].
		\end{align}
		With the help of estimates \eqref{control estimate} and \eqref{estimate of zg} finally we compute
		\begin{align*}
			&\left(\sup_{t\in [0,a]}\mathbb{E}\norm{y(t)}_X^2\right)
			\le \mathbb{E}\left(\sup_{t\in [0,a]}\norm{y(t)}_X^2\right)\\
			\le & 4\left(M^2E \norm{x_0}_X^2+ aM^2 \int_{0}^{a}\eta(s)ds
			+\frac{a^2M^4\norm{B}^4}{\epsilon^2}	\mathbb{E}\norm{Z(g)}^2+K_a\int_{0}^{a}\zeta(s)ds\right)\\
			\le & 4\left(M^2E \norm{x_0}_X^2+ aM^2\int_{0}^{a}\eta(s)ds+\frac{a^2M^4\norm{B}^4}{\epsilon^2}	4\left[ \mathbb{E}\norm{z}^2+M^2\mathbb{E}\norm{x_0}^2\right]\right.\\
			+& \left.\frac{a^2M^4\norm{B}^4}{\epsilon^2}	4\left[M^2a\int_{0}^{a}\eta(s)ds+K_a\mathbb{E}\int_{0}^{a}\zeta(s)ds\right]+K_a\int_{0}^{a}\zeta(s)ds\right)\\
			\le& K_1\mathbb{E}\norm{z}_X^2+K_2\mathbb{E}\norm{x_0}_X^2+K_3\norm{H}_{\mathcal{L}(X)}\int_{0}^{a}\eta(s)ds+K_4\int_{0}^{a}\zeta(s)ds\left(\frac{4a^2M^4\norm{B}^4}{\epsilon^2}+1\right),
		\end{align*}
		where
		\begin{equation}\label{K1}
			K_1=\frac{16a^2M^4\norm{B}^4}{\epsilon^2},
		\end{equation}
		\begin{equation}\label{K2}
			K_2=4M^2+\frac{16a^2M^6\norm{B}^4}{\epsilon^2},
		\end{equation}
		\begin{equation}\label{K3}
			K_3=4aM^2+\frac{16a^3M^6\norm{B}^4}{\epsilon^2},
		\end{equation}
		and
		\begin{equation}\label{K4}
			K_4=\frac{16a^2M^4\norm{B}^4}{\epsilon^2}K_a+K_a.
		\end{equation}
		From this we conclude that there exists $N_0$ (dependent on $\epsilon$) such that $\Gamma_{\epsilon}$ maps the set $B_{N_0}$ into $B_{N_0}$.\\

		\textbf{(STEP-II)} In this step we show that the multimap $\Gamma_{\epsilon}$ maps bounded sets in $ C_{\mathbb{F}}(I, L^2(\Omega, X))$ into relatively compact sets in $ C_{\mathbb{F}}(I, L^2(\Omega, X))$. Let $y_n\in \Gamma_{\epsilon}(q_n)$. 
		By the definition of the operator $\Gamma_{\epsilon}$, we obtain
		\begin{equation}\label{4.7}
			y_n(t)=T(t)x_0+\int_{0}^{t}T(t-s) g_n(s)ds+\int_{0}^{t}T(t-s)Bu_n(s)ds+\int_{0}^{t}T(t-s)\sigma_n(s)dW(s),~ t\in I, n\in \mathbb{N},
		\end{equation}
		where $g_n\in S_G(q_n)$, $\sigma_n\in S_{\Sigma}(q_n)$ and 
		\begin{equation}\label{CC1}
			u_n(t)=B^*T^*(a-t)\left((\epsilon I+G(a))^{-1}Z(g_n)\right),~ t\in I,
		\end{equation}
		where
		\begin{equation}\label{CC2}
			Z(g_n)=z-T(a)x_0-\int_{0}^{a}T(a-s) g_n(s)ds-\int_{0}^{t}T(t-s)\sigma_n(s)dW(s).
		\end{equation}
		By Hypothesis (G3), we conclude that the sequence $\{g_n\}_{n\in \mathbb{N}}\subset L_2^{\mathbb{F}}(I,X)$ is bounded. Hence we confirm that $g_n\rightharpoonup g$ in $L_2^{\mathbb{F}}(I,X)$.  Using the compactness of the semigroup $\{T(t)\}_{t\ge 0}$ we confirm that
		\begin{equation}\label{CC5}
			\int_{0}^{t}T(t-s) g_n(s)ds\to \int_{0}^{t}T(t-s) g(s)ds,~\text{in}~  C_{\mathbb{F}}(I, L^2(\Omega, X)).
		\end{equation}
		Now, the sequence $\{\sigma_n\}_{n\in \mathbb{N}}\subset L^2_{\mathbb{F}}(I,\mathcal{L}^2_0)$ is bounded and hence $\sigma_n\rightharpoonup \sigma^*$ in $L^2_{\mathbb{F}}(I,\mathcal{L}^2_0)$ up to a subsequence. Therefore, by Lemma \ref{stochastic conv}, we obtain 
		\begin{equation}
			\int_{0}^{t}T(t-s)\sigma_n(s)dW(s)\to \int_{0}^{t}T(t-s)\sigma^*(s)dW(s)~\text{in}~ C_{\mathbb{F}}(I, L^2(\Omega, X)).
		\end{equation}
		In particular,
		\begin{equation}
			\int_{0}^{a}T(a-s) g_n(s)ds\to \int_{0}^{a}T(a-s) g(s)ds~\text{as}~n\to \infty~\text{in}~L^2_{\mathbb{F}}(\Omega,X).
		\end{equation}
		Now, we estimate
		\begin{align*}
			\sup_{t\in [0,a]}\norm{\int_{0}^{t}T(t-s)[Bu_n(s)-Bu(s)]ds}_X^2\le& a \sup_{t\in [0, a]}\mathbb{E}\int_{0}^{t}\norm{T(t-s)}_{\mathcal{L}(X)}^2\norm{B}^2_{\mathcal{L}(U,X)}\norm{u_n(s)-u(s)}_U^2ds\\
			\le aM^2\norm{B}_{\mathcal{L}(U,X)}^2\mathbb{E}\int_{0}^{a}\norm{u_n(s)-u(s)}_U^2ds.
		\end{align*}
		Observing the expression of the control $u_n$ given in \eqref{CC1} we compute
		\begin{align*}\label{CC3}
			\mathbb{E}\int_{0}^{a}\norm{u_n(s)-u(s)}^2ds=&\mathbb{E}\int_{0}^{a}\norm{B^*T^*(a-s)[((\epsilon I+G(a))^{-1}Z(g_n))-(\epsilon I+G(a))^{-1}Z(g)]}^2ds.
		\end{align*}
		It is obvious that
		\begin{equation}\label{Cc4}
			Z(g_n)\to Z(g)=z-T(a)x_0-\int_{0}^{a}T(a-s) g(s)ds-\int_{0}^{t}T(t-s)\sigma^*(s)dW(s),~\text{in}~L^2(\Omega,X).
		\end{equation}
		By the continuity of the operator $(\epsilon I+G(a))^{-1}$ together with the convergence \eqref{Cc4} we have
		\begin{align*}
			((\epsilon I+G(a))^{-1}Z(g_n))
			\to ((\epsilon I+G(a))^{-1}Z(g))~\text{in}~L^2(\Omega,X).
		\end{align*}
		Therefore, we have
		\begin{align*}
			\mathbb{E}\int_{0}^{a}\norm{u_n(s)-u(s)}_U^2ds\le M^2\norm{B}^2&\mathbb{E}\int_{0}^{a}\norm{J((\epsilon I+G(a))^{-1}Z(g_n))-J(\epsilon I+G(a))^{-1}Z(g)]}^2ds.
		\end{align*}
		Keeping Lemma \ref{Lem3.1} we have
		\begin{align*}
			&\mathbb{E}\norm{(\epsilon I+G(a))^{-1}Z(g_n)}^2\\
			\le&E \norm{(\epsilon I+G(a))^{-1}Z(g_n)}^2\\
			\le&\frac{1}{\epsilon^2}\mathbb{E}\norm{z-T(a)x_0-\int_{0}^{a}T(a-s) g_n(s)ds-\int_{0}^{a}T(a-s)\sigma_n(s)dW(s)}^2\\
			\le&\frac{4}{\epsilon^2} \left[\mathbb{E}\norm{z}^2+M^2\mathbb{E}\norm{x_0}_X^2+M^2a\int_{0}^{a}\eta(s)ds+K_a\int_{0}^{a}\zeta(s)ds\right], t\in I.
		\end{align*}
		Therefore, an application of Lebesgue Dominated Convergence Theorem guarantee that
		\begin{equation}
			\mathbb{E}\int_{0}^{a}\norm{u_n(s)-u(s)}_U^2ds\to 0~\text{as}~n\to \infty,
		\end{equation}
		and hence 
		\begin{equation}\label{4uu}
			\int_{0}^{t}T(t-s)Bu_n(s)ds\to \int_{0}^{t}T(t-s)Bu(s)ds,~\text{uniformly in}~C_{\mathbb{F}}(I, L^2(\Omega, X)).
		\end{equation}
		Consequently, passing limit in \eqref{4.7} and observing the convergences \eqref{CC5} and \eqref{4uu}we obtain
		\begin{equation}
			y_n(t)\to y(t)=T(t)x_0+\int_{0}^{t}T(t-s)[ g(s)+Bu(s)]ds+\int_{0}^{t}T(t-s)\sigma^*(s)dW(s), ~\text{in}~ C_{\mathbb{F}}(I, L^2(\Omega, X)),
		\end{equation}
		which proves that the set $\Gamma_{\epsilon}(B_R)$ is relatively compact in $ C_{\mathbb{F}}(I, L^2(\Omega, X))$. In particular, the set $\Gamma_{\epsilon}(B_{N_0})$ is relatively compact in $ C_{\mathbb{F}}(I, L^2(\Omega, X))$. Let $Q^{\epsilon}=\overline{\operatorname{conv} \Gamma_{\epsilon}(B_{N_0})}$, then $Q^{\epsilon}$ is a compact convex subset of $ C_{\mathbb{F}}(I, L^2(\Omega, X))$ with $\Gamma_{\epsilon}(Q^{\epsilon})\subset Q^{\epsilon}$.\\
		
		\textbf{STEP-III} 
		It remains to prove that the multimap $\Gamma_{\epsilon}$ is upper semicontinuous. By virtue of  \cite[Theorem 1.1.12]{MR1831201}, it is sufficient to prove that the multimap $\Gamma_{\epsilon}$ has a closed graph. For this we consider $y_n\in \Gamma_{\epsilon}(q_n)$ with $q_n, y_n\in  C_{\mathbb{F}}(I, L^2(\Omega, X))$ and $y_n\to y, q_n\to q$ in $ C_{\mathbb{F}}(I, L^2(\Omega, X))$. We prove that $y\in \Gamma_{\epsilon}(q)$. By the definition of the multimap $\Gamma_{\epsilon}$ we obtain
		\begin{equation}
			y_n(t)=T(t)x_0+\int_{0}^{t}T(t-s)[ g_n(s)+Bu_n(s)]ds+\int_{0}^{t}T(t-s)\sigma_n(s)dW(s), P~\text{a.s},
		\end{equation}
		for some $g_n\in S_G(q_n), ~\sigma_n\in S_{\Sigma}(q_n)$. Proceeding similarly as in (\textbf{STEP-II}) we conclude that
		\begin{equation}
			y_n(t)\to y(t)=T(t)x_0+\int_{0}^{t}T(t-s)[ g(s)+Bu(s)]ds+\int_{0}^{t}T(t-s)\sigma^*(s)dW(s), ~\text{a.s},
		\end{equation}
		for some $g\in L^2_{\mathbb{F}}(I,X),~\sigma^*\in L^2_{\mathbb{F}}(I,\mathcal{L}^2_0)$. Here,  $u\in L^2_{\mathbb{F}}(I,U)$ is given by
		\begin{equation}
			u(t)=B^*T^*(a-t)((\epsilon I+G(a))^{-1}Z(g)),
		\end{equation}
		where 
		\begin{equation}\label{CY3}
			Z(g)=z-T(a)x_0-\int_{0}^{a}T(a-s) g(s)ds-\int_{0}^{a}T(a-s)\sigma^*(s)dW(s).
		\end{equation}
		By virtue of Theorems \ref{Thm3.2},\ref{stochastic measurable selection},  we conclude that $g\in S_G(q)$ and $\sigma^*\in S_{\Sigma}(q)$.
		Hence $y\in \Gamma(q)$ and consequently, the multimap $\Gamma_{\epsilon}$ has a closed graph.\\

		\textbf{(STEP-IV)} Following (\textbf{STEP II}) and (\textbf{STEP III}) we see that the multimap $\Gamma_{\epsilon}$ satisfies all the conditions of Theorem \ref{fixed}. Therefore, by Theorem \ref{fixed}, the map $\Gamma_{\epsilon}$ has a fixed point for each $\epsilon>0$, say $q_{\epsilon}$. By the definition of the multimap $\Gamma_{\epsilon}$ we have
		\begin{align}\label{q epsilon}
			q_{\epsilon}(t)=T(t)x_0+\int_{0}^{t}T(t-s)[ g_{\epsilon}(s)+Bu_{\epsilon}(s)]ds+\int_{0}^{t}T(t-s)\sigma_{\epsilon}(s)ds, t\in I.
		\end{align}
		In the above $g_{\epsilon}\in S_G(q_{\epsilon}), ~\sigma_{\epsilon}\in S_{\Sigma}(q_{\epsilon}$ and $u_{\epsilon}\in L^2(I,U)$ is given by
		\begin{equation}\label{u epsilon}
			u_{\epsilon}(t)=B^*T^*(a-t)\left((\epsilon I+G(a))^{-1}Z(g_{\epsilon})\right),
		\end{equation}
		and
		\begin{equation}
			Z(g_{\epsilon})=z-T(a)x_0-\int_{0}^{a}T(a-s) g_{\epsilon}(s)ds-\int_{0}^{t}T(t-s)\sigma_{\epsilon}(s)dW(s).
		\end{equation}

		\textbf{STEP-V}
		It remains to show that $\mathbb{E}\norm{q_{\epsilon}(a)-z}_X^2\to 0$ as $\epsilon\to 0$.
		
		By virtue of assumption (G3) and ($\sigma$) we conclude that $g_{\epsilon}\rightharpoonup g$ in $L^2_{\mathbb{E}}(I,X)$ and $\sigma_{\epsilon}\rightharpoonup \sigma^*$ in $L^2_{\mathbb{F}}(I,\mathcal{L}^2_0)$. Therefore, by virtue of Theorem \ref{stochastic conv}, we conclude that 
		\begin{align}\label{conv}
			z-T(a)x_0-\int_{0}^{a}T(a-s) g_{\epsilon}(s)ds-\int_{0}^{a}T(a-s)\sigma_{\epsilon}(s)dW(s)\to z-T(a)x_0-\int_{0}^{a}T(a-s) g(s)ds-\int_{0}^{a}T(a-s)\sigma^*(s)ds~\text{in}~L^2(\Omega,X).
		\end{align}
		Also,  recalling the definition of the map $G(a)$, using \eqref{q epsilon} and \eqref{u epsilon} we now estimate
		\begin{align*}
			q_{\epsilon}(a)
			=&T(a)x_0+\int_{0}^{a}T(a-s) g_{\epsilon}(s)ds+\int_{0}^{a}T(a-s)Bu_{\epsilon}(s)ds+\int_{0}^{a}T(a-s)\sigma_{\epsilon}(s)dW(s)\\
			=&T(a)x_0+\int_{0}^{a}T(a-s) g_{\epsilon}(s)ds+\int_{0}^{a}T(a-s)BB^*T^*(a-s)((\epsilon I+G(a))^{-1}Z(g_{\epsilon}))ds\\
			+&\int_{0}^{a}T(a-s)\sigma_{\epsilon}(s)dW(s)\\
			=&T(a)x_0+\int_{0}^{a}T(a-s) g_{\epsilon}(s)ds+G(a)((\epsilon I+G(a))^{-1}Z(g_{\epsilon}))+\int_{0}^{a}T(a-s)\sigma_{\epsilon}(s)dW(s)\\
			=&T(a)x_0+\int_{0}^{a}T(a-s) g_{\epsilon}(s)ds+\int_{0}^{a}T(a-s)\sigma_{\epsilon}(s)dW(s)\\
			+&(\epsilon I+G(a)-\epsilon I)(\epsilon I+G(a))^{-1}Z(g_{\epsilon}))+\int_{0}^{a}T(a-s)\sigma_{\epsilon}(s)dW(s)\\
			=&T(a)x_0+\int_{0}^{a}T(a-s) g_{\epsilon}(s)ds+z-T(a)x_0-\int_{0}^{a}T(a-s) g_{\epsilon}(s)ds-\int_{0}^{a}T(a-s)\sigma_{\epsilon}(s)dW(s)\\
			-&\epsilon(\epsilon I+G(a))^{-1}Z(g_{\epsilon})+\int_{0}^{a}T(a-s)\sigma_{\epsilon}(s)dW(s)\\
			=&z-\epsilon(\epsilon I+G(a))^{-1}\left(z-T(a)x_0-\int_{0}^{a}T(a-s) g_{\epsilon}(s)ds-\int_{0}^{a}T(a-s)\sigma_{\epsilon}(s)dW(s)\right)
		\end{align*}
		Now,
		\begin{equation}
			\mathbb{E}\norm{q_{\epsilon}(a)-z}_X^2\le \mathbb{E}\norm{\epsilon(\epsilon I+G(a))^{-1}\left(z-T(a)x_0-\int_{0}^{a}T(a-s) g_{\epsilon}(s)ds-\int_{0}^{a}T(a-s)\sigma_{\epsilon}(s)dW(s)\right)}_X^2\to 0~\text{as}~\epsilon\to 0^+.
		\end{equation}
		Therefore, by the convergence shown in \eqref{conv} together with Theorem \ref{Thm6.0}, we obtain $q_{\epsilon}(a)\to z$ as desired. The proof is completed.\\
		
	\end{proof}
	\begin{remark}\label{RM}
		Note that under the assumptions of Theorem \ref{MR}, we cannot replace Hypothesis (G3) and ($\sigma$3) with the following Hypotheses on $G=\partial F$ and $\Sigma$:
		\begin{itemize}
			\item[(G4)] there exists a function $\eta\in L^1(I,\mathbb{R}^+)$ and a constant $d>0$ such that
			\begin{equation}
				\norm{G(t,x)}=\sup\{\norm{y}_{X^*}: y\in G(t,x)\}\le \eta(t)+d\norm{x}, ~\text{a.a.}~t\in I, x\in X.
			\end{equation}
			\item[($\sigma$4)] there exists a function $\zeta\in L^1(I,\mathbb{R}^+)$ and a constant $d>0$ such that
			\begin{equation}
				\norm{\Sigma(t,x)}=\sup\{\norm{y}_{X^*}: y\in \Sigma(t,x)\}\le \zeta(t)+d\norm{x}, ~\text{a.a.}~t\in I, x\in X.
			\end{equation}  
		\end{itemize}
		
		This is because, if $q_{\epsilon}, \epsilon>0$ is a mild solution of the problem \eqref{HV2}, then $q_{\epsilon}$ has the form
		\begin{align}\label{MCM1}
			q_{\epsilon}(t)=T(t)x_0+\int_{0}^{t}T(t-s)[g_{\epsilon}(s)+Bu_{\epsilon}(s)]ds+\int_{0}^{t}T(t-s)\sigma_{\epsilon}(s)ds, t\in I.
		\end{align}
		In the above $g_{\epsilon}\in S_{G}(q_{\epsilon})$ and $\sigma_{\epsilon}\in S_{\Sigma}(q_{\epsilon})$. 
		In (\textbf{STEP-V}) of the proof of Theorem \ref{MR} we prove that $q_{\epsilon}(a)\to z$ as $\epsilon\to 0+$. In the proof we need to show that the sequence $\{g_{\epsilon}\}_{\epsilon>0}\subset L^2_{\mathbb{F}}(I, X)$ is bounded and the sequence $\{\sigma_{\epsilon}\}_{\epsilon>0}$ is bounded. Note that $q_{\epsilon}$ has the following bounds (see (\textbf{STEP-I}) of the proof of Theorem \ref{MR}).
		\begin{align}\label{bound}
			\norm{q_{\epsilon}(t)}\le& K_1\mathbb{E}\norm{z}_X^2+K_2\mathbb{E}\norm{x_0}_X^2+K_3
			\int_{0}^{a}\eta(s)ds+K_4\int_{0}^{a}\zeta(s)ds,
		\end{align}
		where $K_i$'s are given by \eqref{K1}-\eqref{K4}.
		Due to the $\frac{1}{\epsilon}$-term in the right hand side of \eqref{bound} the sequence of functions $\{g_{\epsilon}\}_{\epsilon>0}$ fails to be bounded or uniformly integrable under Hypothesis (G4). Therefore, we cannot conclude that $\{g_{\epsilon}\}_{\epsilon>0}\subset L^2(I,X)$ has a weakly convergent subsequence.
	\end{remark}
	\section{Application}
	Let $I=[0, a]$, $D=[0,\pi]\subset \mathbb{R}$ representing a rod, in which the temperature distribution is governed, and $(\Omega,\mathcal{F}, P)$ denotes a probability space. We consider a heat propagation problem with noise term given by the one-dimensional stochastic heat equation
	\begin{equation}\label{SE1}
		d y(\omega,t,\theta)+\partial_{\theta \theta}y(\omega,t,\theta)dt+\Sigma(t,y(\omega,t,\theta))dW(\omega,t,\theta)+Bv(\omega,t,\theta)dt\ni f(\omega,t,\theta) dt,~\omega\in \Omega, ~t\in I, ~\theta\in D;
	\end{equation}
	subject to the initial condition
	\begin{equation}\label{sE2}
		y(\omega,0,\theta)=x_0(\omega,\theta),~\omega\in \Omega, \theta\in D;
	\end{equation}
	and the Dirichlet boundary condition
	\begin{equation}\label{SE3}
		y(\omega,t,0)=y(\omega,t, \pi)=0,~\omega\in \Omega, ~t\in I.
	\end{equation}
	
	In the above process, $y(\omega, t, \theta)$ represents the system's temperature at every moment $t\in (0, a)$ and at every point $\theta\in D$. The nonlinear term $f$ represents the flux of the heat, and $W(t)$ is real standard one-dimensional Brownian motion in $L^2(D)$ over a probability space $(\Omega,\mathcal{F}, P)$. We also assume that 
	\begin{equation}
		\Sigma(t,y(\omega,t,\cdot))=[g_1(t,y(\omega,t,\cdot)),g_2(t,y(\omega,t,\cdot))],
	\end{equation}
	where $g_i:[0,a]\times X\to \mathcal{L}^2_0, ~i=1,2$ be such that
	\begin{itemize}
		\item[(g1)] $g_1$ is lower semicontinuous, and $g_2$ is upper semicontinuous;
		\item[(g2)] $g_1(t,y(\omega,t,\theta))\le g_2(t,y(\omega,t,\theta)),~t\in [0,a],~\omega\in \Omega,~\theta\in [0,\pi]$;
		\item[(g3)] there exists $\alpha\in L^{\infty}([0,1],\mathbb{R}^+)$ such that
		\begin{equation}
			\norm{g_i(t,y(\omega,t,\theta))}\le \alpha(t),~t\in I,~\omega\in \Omega,~\theta\in [0,\pi],~i=1,2.
		\end{equation}
	\end{itemize}
	
	We now prescribe two temperatures $s_1$ and $s_2$ with $s_1\le s_2$. Now we set up volume sources of heat so that $y(\omega,t,\theta)$, $\theta\in D $ deviates as little as possible from the interval $(s_1,s_2)$. These devices have limited power which remains in the closed intervals $[g_1,g_2]$ with $0\in [g_1, g_2]$. We assume that the heat flux satisfies the following relation with the Clarke subdifferential:
	\begin{equation}
		-f(\omega, t,\theta)\in \partial \Phi(\omega,t,y(\omega, t, \theta)),~\omega\in \Omega, \theta\in  D,
	\end{equation}
	where 
	\begin{equation}
		\partial \Phi(\omega, t, u)=\begin{cases}
			g_1, ~\text{if}~u\le s_1\\
			[g_1,0], ~\text{if}~u=s_1\\
			0, ~\text{if}~s_1\le u\le s_2\\
			[0, g_2], ~\text{if}~u=s_2\\
			g_2, ~\text{if}~u\ge s_2.
		\end{cases}
	\end{equation}
	is the Clarke subdifferential of $\Phi(\omega, t, \cdot)$ given below
	\begin{equation}
		\Phi(\omega, t, u)=\begin{cases}
			g_1(u-s_1), ~\text{if}~u<s_1\\
			0, ~\text{if}~s_1\le u\le s_2\\
			g_2(u-s_2), ~\text{if}~u\ge s_2.
		\end{cases}
	\end{equation}

	Now we come to the problem of controllability.\\
	Let $z(\theta)$ be a fixed temperature profile and $\epsilon>0$ be given. Is it possible that by changing the control term $v(\omega,t,\theta)$, we can regulate the temperature of the system in such a way that the temperature of the system at time $a$ satisfies 
	\begin{equation}
		\int_{\Omega}\int_D\abs{y(\omega,a, \theta)-z(\theta)}^2d\theta d\omega<\epsilon.
	\end{equation}
	We now approach by rewriting the control problem \eqref{SE1}-\eqref{SE3} as an abstract stochastic hemivariational problem  in the state space $X=L^2([0,\pi],\mathbb{R})$ and control space $U=L^2([0,\pi], \mathbb{R})$. Note that $X$ is a separable Hilbert space, and $U$ is a separable Hilbert space. To this aim, let us define
	\begin{equation*}
		q(t)(\omega,\theta)=y(\omega,t,\theta), u(t)(\omega, \theta)=v(\omega,t,\theta),~ t\in I,~ \theta\in [0,\pi],~\omega\in \Omega.
	\end{equation*}
	Now, let $x^*\in L^2(D)$. Then we obtain
	\begin{equation}\label{SE4}
		\langle d y(\omega,t,\theta)+ \partial_{\theta \theta}y(\omega,t,\theta)dt+\sigma(t,\theta)dW(\omega,t,\theta)+Bv(\omega,t.\theta)dt, x^*(\theta)\rangle=\left\langle f(\omega,t,\xi)d\xi dt, x^*(\theta)\right\rangle,~\theta\in [0,\pi],
	\end{equation}
	where $\sigma(t,\cdot)\in \Sigma(t,y(\omega,t,\cdot))$ for a.a. $t\in I$. 
	Integrating over $[0,\pi]$ we obtain from above
	\begin{equation}\label{SE5}
		\int_{0}^{\pi}\langle d y(\omega,t,\theta)+ \partial_{\theta \theta}y(\omega,t,\theta)dt+\sigma(t,\theta)dW(\omega,t,\theta)+Bv(\omega,t.\theta)dt, x^*(\theta)\rangle d\theta=\int_{0}^{\pi}\left\langle f(\omega,t,\theta) dt, x^*(\theta)\right\rangle d\theta.
	\end{equation}
	
	Now we can re-write  the equation \eqref{SE5} as
	which can be written as
	\begin{equation}\label{SE6}
		\langle d y(\omega,t,\cdot)+ \partial_{\theta \theta}y(\omega,t,\cdot)dt+\sigma(t,\cdot)dW(\omega,t,\cdot)+Bv(\omega,t.\cdot)dt, x^*(\cdot)\rangle_{X}=\left\langle f(\omega,t,\cdot) dt, x^*(\cdot)\right\rangle_{X} .
	\end{equation}
	
	Noting that $	q(t)(\omega,\theta)=y(\omega,t,\theta), u(t)(\omega, \theta)=v(\omega,t,\theta),~ t\in I,~ \theta\in [0,\pi],~\omega\in \Omega$ we obtain from above
	\begin{equation}\label{SE7}
		\langle dq(t)+ Aq(t)dt+\sigma(t)dW(t)+Bu(t)dt, x^*(\cdot)\rangle_{X}=\left\langle f(t) dt, x^*(\cdot)\right\rangle_{X} .
	\end{equation}
	where, $\sigma(t)\in \Sigma(t,q(t)),$ for a.a. $t\in I$ and the operator
	$A: D(A)\subset X\to X$ is defined as follows
	\begin{equation}
		Ay(\omega,t,\cdot)=\partial_{\theta \theta}y(\omega,t,\cdot),
	\end{equation}
	where
	\begin{equation}
		D(A)=H^2([0,\pi], \mathbb{R})\cap H^1_0([0,\pi], \mathbb{R}).
	\end{equation}

	The operator $B:U=L^2([0,\pi], \mathbb{R})\to X$ is the identity operator.
	Now we define
	\begin{equation}\label{Lambda0}
		F(t,x)=\int_{0}^{\pi}\Phi^0(t,\theta,x(\theta))d\theta,~ t\in I, x\in L^2([0,\pi],\mathbb{R}).
	\end{equation}
	Arguing as the proof of Theorem 3.47 \cite{MR2976197}, we derive the following Lemma.
	\begin{lemma}\label{Lemma 5.1}
		The function $F$ defined in \eqref{Lambda0} satisfies the following:
		\begin{itemize}
			\item[(F1)] $F(t,\cdot)$ is well defined and finite on $L^2([0,\pi],\mathbb{R})$ for a.a. $t\in I$.
			\item[(F2)] $F(\cdot, x)$ is measurable on $I$ for all $x\in L^2([0,\pi],\mathbb{R})$.
			\item[(F3)] $F(t,\cdot)$ is Lipschitz on bounded subsets of $L^2([0,\pi],\mathbb{R})$ a.a. $t\in I$.
			\item[(F4)] For all $x\in L^2([0,\pi],\mathbb{R}), v\in L^2([0,\pi],\mathbb{R})$ a.a. $t\in I$ we have
			\begin{equation}
				F^0(t,x;v)= \int_{0}^{\pi}\Phi^0(t,\theta, x(\theta);v(\theta))d\theta.
			\end{equation}
			In the above, $F^0(t,\theta,\cdot;\cdot)$  denotes the generalized Clarke directional derivative of the function $F(t,\theta,\cdot)$.			\item[(F5)] For all $x\in L^2([0,\pi],\mathbb{R})$ a.a. $t\in I$ we have
			\begin{equation}
				\partial F(t,x)= \int_{0}^{\pi}\partial \Phi(t,\theta,x(\theta))d\theta.
			\end{equation}
		\end{itemize}
	\end{lemma}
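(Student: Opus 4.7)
The plan is to adapt the proof of Theorem 3.47 in \cite{MR2976197}, which establishes the standard calculus for integral functionals of Clarke type. The key structural observation is that $\Phi(t,\theta,\cdot)$ is locally Lipschitz in its third argument with Lipschitz constant controlled by $\max\{|g_1|,|g_2|\}$, and that the growth condition (g3) provides a uniform bound $\alpha(t)$ on the subdifferentials $\partial \Phi(t,\theta,\cdot)$. These two facts supply exactly the hypotheses needed to invoke the Aubin--Clarke machinery. Note that in \eqref{Lambda0}, the integrand should be read as $\Phi(t,\theta,x(\theta))$ rather than $\Phi^0$, since $\Phi^0$ requires a direction argument.

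For (F1)--(F3), I would proceed by direct verification. For (F1), measurability of $\theta \mapsto \Phi(t,\theta,x(\theta))$ is standard from Carath\'eodory-type properties combined with measurability of $x$, and finiteness of the integral uses the growth bound $|\Phi(t,\theta,u)| \le \alpha(t)(|u| + |s_1| + |s_2|)$ together with Cauchy--Schwarz on the bounded domain $[0,\pi]$. Part (F2) follows by a Fubini--Tonelli argument together with the joint measurability of $(t,\theta) \mapsto \Phi(t,\theta,u)$ for fixed $u$. For (F3), given $x_1, x_2$ in a bounded set of $L^2([0,\pi],\mathbb{R})$, Lebourg's mean value theorem applied pointwise gives $|\Phi(t,\theta,x_1(\theta)) - \Phi(t,\theta,x_2(\theta))| \le \alpha(t)|x_1(\theta) - x_2(\theta)|$; integration and Cauchy--Schwarz then deliver the desired Lipschitz estimate with constant proportional to $\alpha(t)\sqrt{\pi}$.

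The substantive content lies in (F4) and (F5). For (F4), I would establish the two inequalities separately. The inequality $F^0(t,x;v) \le \int_0^\pi \Phi^0(t,\theta,x(\theta);v(\theta))\,d\theta$ comes from writing out the definition of $F^0$ with sequences $\xi_n \to x$ in $L^2$ and $\epsilon_n \downarrow 0$, extracting a subsequence convergent a.e.\ on $[0,\pi]$, and applying the reverse Fatou lemma, with $\alpha(t)$ providing the required integrable domination. The reverse inequality uses a measurable selection argument: for each $\theta$ one constructs sequences realizing $\Phi^0(t,\theta,x(\theta);v(\theta))$, and these are glued measurably into admissible sequences in $L^2([0,\pi],\mathbb{R})$. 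Property (F5) then follows from (F4) and Lemma \ref{Lemma 2.14}(i) by characterizing $\partial F(t,x)$ through the support functional $v \mapsto F^0(t,x;v)$; the representation
\begin{equation*}
\partial F(t,x) = \bigl\{ y^* \in L^2([0,\pi],\mathbb{R}) : y^*(\theta) \in \partial \Phi(t,\theta, x(\theta)) \text{ for a.a.\ } \theta \bigr\}
\end{equation*}
is the standard Aubin--Clarke integral representation.

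The main obstacle is the reverse direction in (F4): the passage from pointwise approximating sequences to a globally admissible sequence in $L^2([0,\pi],\mathbb{R})$ demands a careful measurable selection, and exchanging limsup with the integral requires the uniform integrability supplied by (g3). Once this exchange is justified, (F5) is a formal consequence via the duality between $F^0(t,x;\cdot)$ and $\partial F(t,x)$.
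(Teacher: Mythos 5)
The paper offers no proof of this lemma beyond the single sentence ``Arguing as the proof of Theorem 3.47 \cite{MR2976197}, we derive the following Lemma,'' so your plan of reconstructing the Aubin--Clarke argument is exactly the intended route, and your treatment of (F1)--(F3) (growth and Lipschitz bounds, a Fubini argument for measurability in $t$, Lebourg's mean value theorem with the uniform bound $\alpha(t)$ from (g3)) is sound. Your observation that the integrand in \eqref{Lambda0} must be read as $\Phi(t,\theta,x(\theta))$ rather than $\Phi^0$ is also correct.

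There is, however, a genuine gap in your argument for the \emph{equality} in (F4), and hence in (F5). The Aubin--Clarke theorem in general yields only the inequality $F^0(t,x;v)\le\int_0^\pi \Phi^0(t,\theta,x(\theta);v(\theta))\,d\theta$ and the inclusion $\partial F(t,x)\subseteq\bigl\{ y^*: y^*(\theta)\in\partial\Phi(t,\theta,x(\theta))\ \text{for a.a.}\ \theta\bigr\}$. Your proposed proof of the reverse inequality --- choosing, for each $\theta$, sequences realizing $\Phi^0(t,\theta,x(\theta);v(\theta))$ and gluing them measurably --- fails at a concrete point: in the definition of $F^0(t,x;v)$ the parameter $\epsilon\downarrow 0$ is a single scalar, whereas the pointwise suprema $\Phi^0(t,\theta,x(\theta);v(\theta))$ are in general approached along rates $\epsilon_n(\theta)$ depending on $\theta$, and these cannot be synchronized into one admissible sequence; gluing the base points $\xi_n(\theta)$ measurably does not resolve this. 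The missing idea is the regularity (in Clarke's sense) of $\Phi(t,\theta,\cdot)$: since $0\in[g_1,g_2]$, i.e.\ $g_1\le 0\le g_2$, the piecewise linear function $\Phi(t,\theta,\cdot)$ has nondecreasing slopes $g_1,0,g_2$ and is therefore convex, hence regular, so $\Phi^0$ coincides with the one-sided directional derivative and the difference quotients can be handled by dominated convergence with a fixed $\epsilon_n\downarrow 0$ and $\xi_n=x$; equality in (F4) then follows, and (F5) follows by the support-function duality with Lemma \ref{Lemma 2.14}(i) as you describe. Without invoking this regularity your argument establishes only one of the two inequalities.
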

	By the definition of the Clarke subdifferential we obtain from $-f(\omega,t,\theta)\in \partial \Phi(\omega,t, y(\omega, t,\theta))$
	\begin{equation}
		\langle -f(\omega,t,\theta), x^*(\theta)\rangle \le \Phi^0(\omega, t, y(\omega,t,\theta);x^*(\theta)),~\theta\in [0,\pi].
	\end{equation}
	Integrating over $[0,\pi]$ we obtain
	\begin{equation}
		\langle -f(\omega,t,\cdot), x^*(\cdot)\rangle_{X} \le \int_{0}^{\pi}\Phi^0(\omega, t, y(\omega,t,\theta);x^*(\theta))d\theta,
	\end{equation}
	which implies
	\begin{equation}\label{S0}
		\langle - f(t), x^*(\cdot)\rangle_{X} \le \int_{0}^{\pi}\Phi^0(\omega, t, y(\omega,t,\theta);x^*(\theta))d\theta=F^0(t,q(t);x^*),~\text{a.s.}..
	\end{equation}
	Using \eqref{S0} together with \eqref{SE6}, the abstract reformulation of equation \eqref{SE1} may be given as the following semilinear evolution hemivariational inequalities in the Hilbert space $X=L^2([0,\pi],\mathbb{R})$:
	\begin{equation}\label{SE9}
		\langle dq(t)+ Aq(t)dt+\Sigma(t,q(t)))dW(t)+Bu(t)dt, x^*(\cdot)\rangle_{X}+F^0( t,q(t);x^*)\ge 0.
	\end{equation}
	Of course, the solutions to Problem \eqref{SE9} give rise to solutions for \eqref{HV1}. We now verify that all the Hypotheses of Theorem \ref{MR} hold. 
	
	The spectrum of the operator $A$ is given by $\{-n^2: n\in \mathbb{N}\}$. Then, for every $x\in D(A)$, the operator $A$ can be written as
	\begin{equation}
		Ax=\sum_{n=1}^{\infty}-n^2\langle x, w_n\rangle w_n,~ \langle x,w_n\rangle=\int_{0}^{\pi}x(\theta)w_n(\theta)d\theta,
	\end{equation}
	where $w_n(\theta)=\sqrt{\frac{2}{\pi}}\sin(n\theta)$ are the normalized eigenfunctions (with respect to the $L^2$ norm) of the operator $A$ corresponding to the eigenvalues $-n^2, n\in \mathbb{N}$. The strongly continuous semigroup $\{T(t)\}_{t\ge 0}$ generated by the operator $A$ is given by
	\begin{equation}
		T(t)x=\sum_{n=1}^{\infty}e^{-n^2t}\langle x, w_n\rangle w_n,~ x\in X.
	\end{equation}
	It is obvious that $\norm{T(t)}\le 1$ for all $t\ge 0$ and $T(t)$ is compact in $X$ for all $t>0$. Therefore, Hypothesis ($\mathcal{T}$) is satisfied. It is easy to prove that the linear control system is approximately controllable in $I$.
	Thus, we verify that Hypothesis ($\mathcal{T}$) and (B) holds. Also, by Lemma \ref{Lemma 5.1}, the function satisfies Hypotheses (1)-(6). Based on the properties (3) and (6), the function $F(t,x)$ satisfies all the conditions of Theorem \ref{MR}. Moreover, it is straightforward to check that the multimap $\Sigma$ satisfies Hypothesis ($\sigma$). Hence, by Theorem \ref{MR}, we conclude that the control system \eqref{SE1}-\eqref{SE3} is approximately controllable in $I$. 
	\section{Appendix}
	In this section, we define deterministic and stochastic convolution in Banach spaces and state some properties.
	\begin{theorem}
		Let $f$ be an $X$-valued stochastic process on $[0, a]$ such that the trajectories of $f$ are $P$- a.s. Bochner integrable; that means there exists $\bar{\Omega}\in \mathcal{F}$ with $P(\bar{\Omega})=1$ such that for each $\omega\in \bar{\Omega}$, the mapping $f(\cdot, \omega):[0, a]\to X$ is Borel measurable and $\int_{0}^{a}\norm{f(t,\omega)}_Xdt<\infty$. For each $t\in [0, a]$, define an $X$-valued stochastic process $u$ on $[0,t]$ by $u(r)=T(t-r)f(r), ~r\in [0,t]$. Then the trajectories of $u$ are $P$- a.s. Bochner integrable. 
	\end{theorem}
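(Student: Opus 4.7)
The plan is to fix an arbitrary $\omega\in\bar\Omega$ and verify, sample-path by sample-path, the two defining properties of Bochner integrability for the map $r\mapsto u(r,\omega)=T(t-r)f(r,\omega)$ on $[0,t]$: Borel measurability and finiteness of $\int_0^t\norm{u(r,\omega)}_X\,dr$. Throughout, I will use only standard $C_0$-semigroup facts that do not require the compactness assumption $(\mathcal T)$, namely local boundedness and strong continuity of $\{T(s)\}_{s\ge 0}$.

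First, I would handle Borel measurability. The hypothesis ($\mathcal{T}$) gives a bound $\norm{T(s)}_{\mathcal L(X)}\le M$ on $[0,a]$; independently, strong continuity of the semigroup makes $[0,t]\ni s\mapsto T(s)x$ continuous for every $x\in X$. From this one shows that the two-variable map
\begin{equation*}
\Psi:[0,t]\times X\to X,\qquad \Psi(r,x)=T(t-r)x,
\end{equation*}
is jointly continuous: if $(r_n,x_n)\to(r,x)$, then $\norm{T(t-r_n)x_n-T(t-r)x}_X\le M\norm{x_n-x}_X+\norm{T(t-r_n)x-T(t-r)x}_X\to 0$. Since $f(\cdot,\omega)$ is Borel measurable into $X$ for every $\omega\in\bar\Omega$, the composite $r\mapsto \Psi(r,f(r,\omega))=T(t-r)f(r,\omega)$ is Borel measurable as the composition of a continuous map with a Borel measurable one (applied to $r\mapsto(r,f(r,\omega))$, which is Borel measurable coordinatewise). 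Alternatively, one can approximate $f(\cdot,\omega)$ pointwise by a sequence of simple $X$-valued functions $f_n$; the maps $r\mapsto T(t-r)f_n(r)$ are piecewise continuous, hence Borel measurable, and pass to the pointwise limit.

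Second, for Bochner integrability of the trajectory, I would use the local boundedness of the semigroup on $[0,a]$ from ($\mathcal T$) to estimate
\begin{equation*}
\int_0^t\norm{u(r,\omega)}_X\,dr=\int_0^t\norm{T(t-r)f(r,\omega)}_X\,dr\le M\int_0^t\norm{f(r,\omega)}_X\,dr\le M\int_0^a\norm{f(r,\omega)}_X\,dr<\infty,
\end{equation*}
where the last finiteness holds for every $\omega\in\bar\Omega$ by assumption. Combined with Borel measurability, this establishes Bochner integrability of $u(\cdot,\omega)$ on $[0,t]$ for every $\omega\in\bar\Omega$, which is the claim.

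The only subtle point is the measurability of the composite $r\mapsto T(t-r)f(r,\omega)$, since the operator-valued map $r\mapsto T(t-r)$ is only strongly—not uniformly—continuous in general; the joint-continuity reduction or the simple-function approximation sidesteps this cleanly, and the remainder is a one-line uniform-boundedness estimate, so I do not anticipate any serious obstacle.
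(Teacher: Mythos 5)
The paper states this theorem in its Appendix without any proof, so there is nothing to compare your argument against; judged on its own, your proof is correct and is the standard one. Both of your routes to measurability work: the joint continuity of $(r,x)\mapsto T(t-r)x$ follows exactly by the $M\norm{x_n-x}_X+\norm{T(t-r_n)x-T(t-r)x}_X$ splitting you give (uniform boundedness on $[0,a]$ plus strong continuity), and composing with the Borel map $r\mapsto(r,f(r,\omega))$ yields Borel measurability of the trajectory of $u$. The only hypothesis you use implicitly is separability of $X$, which is needed both for coordinatewise Borel measurability of $r\mapsto(r,f(r,\omega))$ to imply joint Borel measurability into $[0,t]\times X$, and for Borel measurability to coincide with the strong measurability underlying Bochner integration (Pettis); since the paper works in a separable Hilbert space $X$, this is harmless, though it would be worth stating. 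The concluding estimate $\int_0^t\norm{T(t-r)f(r,\omega)}_X\,dr\le M\int_0^a\norm{f(r,\omega)}_X\,dr<\infty$ for each $\omega\in\bar\Omega$ is exactly what is needed, and correctly uses only the uniform bound $M$ from Hypothesis $(\mathcal T)$, not compactness.
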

	\begin{corollary}
		If $f$ is an $X$- valued stochastic process on $[0, a]$ such that the trajectories of $f$ are $P$- a.s. Bochner integrable, then for each $t\in [0, a]$, the integral $\int_{0}^{t}T(t-r)f(r)dr$ exists $P$- a.s. Therefore, a process $\zeta$ defined by
		\begin{equation}
			\zeta(t)=\int_{0}^{t}T(t-r)f(r)dr, ~t\in [0, a]
		\end{equation}
		is an $X$- valued stochastic process on $[0, a]$ and is called the deterministic convolution.
	\end{corollary}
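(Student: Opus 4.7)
The plan is to derive each of the two assertions of the corollary directly from the preceding theorem, with the only extra work being a measurability check for the parameterized Bochner integral.

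First, I would fix $t\in[0,a]$. The preceding theorem states that for this $t$, the process $r\mapsto T(t-r)f(r)$ has $P$-a.s.\ Bochner integrable trajectories on $[0,t]$. Hence there is a full-measure set $\bar{\Omega}_t\in\mathcal{F}$ such that for every $\omega\in\bar{\Omega}_t$ the Bochner integral $\zeta(t,\omega):=\int_0^t T(t-r)f(r,\omega)\,dr$ exists as an element of $X$; on the null set $\Omega\setminus\bar{\Omega}_t$ I would simply set $\zeta(t,\omega):=0$. This immediately yields the first claim that $\int_0^t T(t-r)f(r)\,dr$ is defined $P$-a.s.

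Second, to obtain the stochastic-process assertion I need $\zeta(t):\Omega\to X$ to be $\mathcal{F}$-measurable for each $t\in[0,a]$. Since $f$ is an $X$-valued stochastic process (hence each $f(r):\Omega\to X$ is $\mathcal{F}$-measurable) and $r\mapsto T(t-r)\in\mathcal{L}(X)$ is strongly continuous, the integrand $(r,\omega)\mapsto T(t-r)f(r,\omega)$ is jointly $\mathcal{B}([0,t])\otimes\mathcal{F}$-measurable into the separable space $X$. The measurability of the resulting parameterized Bochner integral $\omega\mapsto\int_0^t T(t-r)f(r,\omega)\,dr$ in $\omega$ then follows from the standard construction of the Bochner integral via simple-function approximations: each partial sum is $\mathcal{F}$-measurable, and the $P$-a.s.\ pointwise limit of $\mathcal{F}$-measurable $X$-valued functions is $\mathcal{F}$-measurable (using separability of $X$). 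Hence $\zeta(t)$ is $\mathcal{F}$-measurable, and the family $\{\zeta(t)\}_{t\in[0,a]}$ is, by definition, an $X$-valued stochastic process on $[0,a]$.

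The only modest obstacle is justifying the joint measurability of $(r,\omega)\mapsto T(t-r)f(r,\omega)$ from the bare hypothesis that $f$ is a stochastic process with $P$-a.s.\ Bochner integrable trajectories; this is handled either by assuming (or passing to) a jointly measurable modification of $f$ via the propositions on progressively measurable modifications quoted in the Preliminaries, after which strong continuity of $T$ transfers measurability to the integrand automatically. Beyond that single observation, every step is an immediate consequence of Bochner integration theory and the preceding theorem, so no genuine difficulty is expected.
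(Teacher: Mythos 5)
Your proof is correct and takes essentially the same route as the paper, which states this corollary as an immediate consequence of the preceding theorem without giving a separate argument: fix $t$, invoke the theorem to get $P$-a.s.\ Bochner integrability of $r\mapsto T(t-r)f(r)$, and conclude. Your extra care about the $\mathcal{F}$-measurability of $\omega\mapsto\zeta(t,\omega)$ (via a jointly measurable modification of $f$) addresses a point the paper leaves implicit and is a legitimate, standard way to close it.
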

	\begin{proposition}\label{continuity of deterministic convolution}
		The deterministic convolution $\zeta$ is continuous, that is, the trajectories of $\zeta$ are $P$- a.s continuous on $[0, a]$; that means there exists $\bar{\Omega}\in \mathcal{F}$ with $P(\bar{\Omega})=1$ such that for each $\omega\in \bar{\Omega}$, the mapping
		\begin{equation}
			\zeta(\cdot,\omega):[0, a]\ni t\mapsto \zeta(t,\omega)=\int_{0}^{t}T(t-r)f(r,\omega)dr\in X
		\end{equation}
		is continuous on $[0, a]$.
	\end{proposition}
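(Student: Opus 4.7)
The plan is to establish $P$-almost sure continuity of $\zeta(\cdot,\omega)$ by fixing a realization for which $f(\cdot,\omega)$ is Bochner integrable, and then verifying continuity at an arbitrary point through a standard decomposition that separates the increment in the upper limit of integration from the increment in the semigroup argument. First, I would fix $\omega \in \bar{\Omega}$ (where $\bar{\Omega}$ is the full-measure set from the hypothesis) and write $g := f(\cdot,\omega)$, so that $g:[0,a]\to X$ is Borel measurable with $\int_{0}^{a}\norm{g(r)}_X dr < \infty$. Since $\{T(t)\}_{t\ge 0}$ is a $C_0$-semigroup, it is uniformly bounded on $[0,a]$, so I may set $M := \sup_{t\in[0,a]}\norm{T(t)}_{\mathcal{L}(X)} < \infty$ (this is also given by Hypothesis ($\mathcal{T}$) in the main theorem, but is a general property).

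Next, for continuity at $s \in [0,a)$ from the right, I would, for $0 \le s < t \le a$, split the increment as
\begin{equation*}
\zeta(t,\omega) - \zeta(s,\omega) = \int_{s}^{t} T(t-r) g(r)\, dr + \int_{0}^{s} [T(t-r) - T(s-r)] g(r)\, dr.
\end{equation*}
The first integral is dominated in norm by $M\int_{s}^{t}\norm{g(r)}_X dr$, which tends to $0$ as $t \downarrow s$ by absolute continuity of the Bochner integral. For the second integral, the semigroup property yields $T(t-r) - T(s-r) = [T(t-s)-I]T(s-r)$, so strong continuity of $\{T(\cdot)\}$ gives $[T(t-s)-I]T(s-r)g(r) \to 0$ pointwise in $r \in [0,s]$ as $t \downarrow s$, while the uniform bound provides the pointwise domination $\norm{[T(t-r)-T(s-r)]g(r)}_X \le 2M\norm{g(r)}_X$, which is integrable. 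The dominated convergence theorem in the Bochner setting then sends this term to zero.

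For left continuity at $s \in (0,a]$, I would employ the symmetric decomposition using $T(s-r) - T(t-r) = [T(s-t)-I]T(t-r)$ for $0 \le r \le t < s$, and the same combination of strong continuity and dominated convergence applies verbatim. Continuity at $t = 0$ is immediate, since $\zeta(0,\omega) = 0$ and $\norm{\zeta(t,\omega)}_X \le M \int_{0}^{t}\norm{g(r)}_X dr \to 0$.

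The principal technical point is the justification of the dominated convergence argument: it hinges on the semigroup factorization $T(t-r) = T(t-s)T(s-r)$, which converts strong operator continuity into pointwise convergence of the integrand, together with the uniform bound $M$ and the $L^1$ estimate on $\norm{g(\cdot)}_X$. Since all three ingredients are in hand, I do not anticipate any serious obstacle; the argument is essentially a routine adaptation of the deterministic result on the continuity of $t \mapsto \int_0^t T(t-r) g(r)\, dr$ for $g \in L^1([0,a];X)$, applied pathwise for each $\omega$ in the full-measure set $\bar{\Omega}$.
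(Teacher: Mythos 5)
Your argument is correct. Note that the paper states this proposition in the Appendix without any proof at all, so there is nothing to compare against; your pathwise argument --- fixing $\omega$ in the full-measure set on which $f(\cdot,\omega)\in L^1([0,a];X)$, splitting the increment into $\int_s^t T(t-r)g(r)\,dr$ plus $\int_0^s[T(t-r)-T(s-r)]g(r)\,dr$, and handling the two pieces by absolute continuity of the Bochner integral and by dominated convergence with the bound $2M\norm{g(r)}_X$ --- is the standard proof and fills the gap cleanly. The only step worth spelling out is the pointwise convergence in the left-continuity case, where both the operator and the evaluation point move with $t$; there the semigroup identity $T(s-t)T(t-r)=T(s-r)$ (equivalently, continuity of the orbit map $\tau\mapsto T(\tau)x$) gives $[T(s-t)-I]T(t-r)g(r)=T(s-r)g(r)-T(t-r)g(r)\to 0$ for each fixed $r$, exactly as you indicate.
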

	\begin{lemma}\label{stochastic convolution}
		Let $\{T(t)\}_{t\ge 0}$ be a compact strongly continuous semigroup of bounded linear operators $T(t)$ acting on a separable reflexive Banach space $X$. Then the stochastic convolution $V:[0,a]\to L^2(\Omega, X)$ given by 
		\begin{equation}
			V(t)=\int_{0}^{t}T(t-s)\sigma(s)dW(s), ~t\in I,~~\sigma\in L^2_{\mathbb{F}}(I,\mathcal{L}^2_0)
		\end{equation}
		is uniformly continuous.
	\end{lemma}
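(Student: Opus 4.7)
The plan is to establish pointwise continuity of $V:[0,a]\to L^2(\Omega,X)$ and then invoke the standard fact that any continuous map from a compact metric space into a metric space is automatically uniformly continuous; this reduces the lemma to showing continuity at an arbitrary $t_0\in[0,a]$. Fixing $t_0<t$ in $[0,a]$ and using the semigroup identity $T(t-s)=T(t-t_0)T(t_0-s)$ together with the fact that a deterministic bounded linear operator commutes with the Ito integral, I split
\begin{equation*}
V(t)-V(t_0)=[T(t-t_0)-I]V(t_0)+\int_{t_0}^{t}T(t-s)\sigma(s)\,dW(s)=:I_1+I_2,
\end{equation*}
and handle the two summands independently.

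For the incremental term $I_2$, Theorem \ref{TS1} together with the Hilbert--Schmidt bound $\norm{T(r)\Phi}_{\mathcal{L}^2_0}\le \norm{T(r)}_{\mathcal{L}(X)}\norm{\Phi}_{\mathcal{L}^2_0}$ and the uniform bound $\norm{T(r)}_{\mathcal{L}(X)}\le M$ from Hypothesis $(\mathcal{T})$ yields
\begin{equation*}
\mathbb{E}\norm{I_2}_X^2 \le CM^2\int_{t_0}^{t}\mathbb{E}\norm{\sigma(s)}_{\mathcal{L}^2_0}^{2}\,ds,
\end{equation*}
which tends to $0$ as $t\to t_0^+$ by absolute continuity of the Lebesgue integral, since $\sigma\in L^2_{\mathbb{F}}(I,\mathcal{L}^2_0)$. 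For $I_1$, I note that $V(t_0)\in L^2(\Omega,X)$ by Theorem \ref{TS1}; strong continuity of $\{T(t)\}_{t\ge 0}$ gives $[T(h)-I]V(t_0)(\omega)\to 0$ almost surely as $h\to 0^+$, while the pointwise bound $\norm{[T(h)-I]V(t_0)(\omega)}_X\le (M+1)\norm{V(t_0)(\omega)}_X$ provides an $L^2(\Omega)$ dominating function. The Dominated Convergence Theorem then forces $\mathbb{E}\norm{I_1}_X^2\to 0$, and the case $t<t_0$ is handled by a symmetric decomposition.

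The only technical subtlety I foresee is justifying the factoring step $\int_0^{t_0}[T(t-t_0)-I]T(t_0-s)\sigma(s)\,dW(s)=[T(t-t_0)-I]V(t_0)$; this is the standard commutativity of a deterministic bounded linear operator with the Hilbert space stochastic integral, and without it the dominated convergence argument for $I_1$ would not go through. I remark that compactness of $T(t)$, although assumed in the statement, is not actually invoked at any point in the argument: only the strong continuity and the uniform operator bound from Hypothesis $(\mathcal{T})$ are used, so the compactness assumption is a convenient artifact of the ambient setting rather than a genuine ingredient of the proof.
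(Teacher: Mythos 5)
Your argument takes a genuinely different route from the paper's. The paper proves continuity by inserting an auxiliary shift $\epsilon$, splitting the integral into four pieces $V_1,\dots,V_4$, and controlling the main piece $V_1$ via the norm continuity of $t\mapsto T(t)$ in the uniform operator topology for $t>0$ — which is exactly where the compactness of the semigroup enters — while the remaining pieces are small by absolute continuity of $\int\mathbb{E}\norm{\sigma(s)}^2_{\mathcal{L}^2_0}\,ds$. Your decomposition $V(t)-V(t_0)=[T(t-t_0)-I]V(t_0)+\int_{t_0}^{t}T(t-s)\sigma(s)\,dW(s)$, with the It\^{o}-isometry estimate for the increment and dominated convergence for the semigroup term, uses only strong continuity and the uniform bound $\norm{T(r)}_{\mathcal{L}(X)}\le M$ on $[0,a]$. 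Your observation that compactness is never invoked is correct: mean-square continuity of the stochastic convolution holds for any $C_0$-semigroup, so your argument is both more elementary and more general than the paper's. The factoring step you single out is indeed just the commutation of a fixed bounded deterministic operator with the stochastic integral and is unproblematic.

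The one step that does not go through as written is the closing remark that the case $t<t_0$ is "handled by a symmetric decomposition." The symmetric identity is $V(t_0)-V(t)=[T(t_0-t)-I]V(t)+\int_{t}^{t_0}T(t_0-s)\sigma(s)\,dW(s)$, and the first summand now contains the moving target $V(t)$ rather than the fixed random variable $V(t_0)$; the almost-sure convergence $[T(h)-I]V(t_0-h)(\omega)\to 0$ is no longer an instance of strong continuity applied to a fixed element, so the dominating-function argument does not transfer. (Reducing to the fixed case via $\norm{[T(h)-I]V(t_0-h)}_X\le\norm{[T(h)-I]V(t_0)}_X+(M+1)\norm{V(t_0-h)-V(t_0)}_X$ is circular, since the last term is the quantity being estimated.) The repair is to move the dominated convergence inside the It\^{o} isometry: write $V(t_0)-V(t)=\int_{0}^{t}[T(t_0-s)-T(t-s)]\sigma(s)\,dW(s)+\int_{t}^{t_0}T(t_0-s)\sigma(s)\,dW(s)$, bound the second moment of the first term by $C\int_{0}^{t}\mathbb{E}\norm{[T(t_0-s)-T(t-s)]\sigma(s)}^2_{\mathcal{L}^2_0}\,ds$, note that the integrand tends to $0$ for a.e.\ $(s,\omega)$ by strong continuity (summing over an orthonormal basis of $Q^{1/2}\mathcal{K}$) and is dominated by $4M^2\norm{\sigma(s)}^2_{\mathcal{L}^2_0}$, and apply dominated convergence. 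This version handles both one-sided limits at once and in fact renders your semigroup-factoring step unnecessary.
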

	\begin{proof}
		Let $0\le t_1<t_2\le a$. We estimate
		\begin{equation}
			\mathbb{E}\norm{V(t_2)-V(t_1)}_X^2=\mathbb{E}\norm{\int_{0}^{t_2}T(t_2-s)\sigma(s)dW(s)-\int_{0}^{t_1}T(t_1-s)\sigma(s)dW(s)}_X^2.
		\end{equation}
		We consider the two cases:\\
		
		\textbf{case-I:} $t_1=0$ and $0<t_2\le a$. Then, thanks  to Theorem \ref{stochastic convolution estimate}, we estimate
		\begin{align*}
			\mathbb{E}\norm{V(t_2)-V(t_1)}_X^2=&\mathbb{E}\norm{\int_{0}^{t_2}T(t_2-s)\sigma(s)dW(s)}_X^2\\
			\le& \mathbb{E}\sup_{t\in [0,t_2]} \norm{\int_{0}^{t}T(t-s)\sigma(s)dW(s)}_X^2\\
			\le& \mathbb{E}\int_{0}^{t_2}\norm{\sigma(s)}_{\mathcal{L}^2_0}^2ds\to 0~\text{as}~t_2\to 0^+.
		\end{align*} 
		\textbf{Case-II:} Let $0<t_1\le t_2\le a$. Then we estimate
		\begin{align*}
			\mathbb{E}\norm{V(t_2)-V(t_1)}^2=&\mathbb{E}\norm{\int_{0}^{t_2}T(t_2-s)\sigma(s)dW(s)-\int_{0}^{t_1}T(t_1-s)\sigma(s)dW(s)}_X^2\\
			=&\mathbb{E}\norm{\int_{0}^{t_2-\epsilon}T(t_2-s-\epsilon)T(\epsilon)\sigma(s)dW(s)+\int_{t_2-\epsilon}^{t_2}T(t_2-s-\epsilon)T(\epsilon)\sigma(s)dW(s)\right.\\
				&\left.-\int_{0}^{t_1-\epsilon}T(t_1-s-\epsilon)T(\epsilon)\sigma(s)dW(s)-\int_{t_1-\epsilon}^{t_1}T(t_1-s-\epsilon)T(\epsilon)\sigma(s)dW(s)}_X^2\\
			=&\mathbb{E}\norm{\int_{0}^{t_1-\epsilon}[T(t_2-s-\epsilon)T(\epsilon)-T(t_1-s-\epsilon)T(\epsilon)]\sigma(s)dW(s)\right.\\
				+&\left.\int_{t_2-\epsilon}^{t_2}T(t_2-s-\epsilon)T(\epsilon)\sigma(s)dW(s)\right.\\
				&\left.+\int_{t_1-\epsilon}^{t_2-\epsilon}T(t_2-s-\epsilon)T(\epsilon)\sigma(s)dW(s)-\int_{t_1-\epsilon}^{t_1}T(t_1-s-\epsilon)T(\epsilon)\sigma(s)dW(s)}_X^2\\\
			\le& 4 \mathbb{E}\norm{\int_{0}^{t_1-\epsilon}[T(t_2-s-\epsilon)T(\epsilon)-T(t_1-s-\epsilon)T(\epsilon)]\sigma(s)dW(s)}^2_X\\
			+&4 \mathbb{E}\norm{\int_{t_2-\epsilon}^{t_2}T(t_2-s-\epsilon)T(\epsilon)\sigma(s)dW(s)}^2_X\\
			+& 4\mathbb{E}\norm{\int_{t_1-\epsilon}^{t_2-\epsilon}T(t_2-s-\epsilon)T(\epsilon)\sigma(s)dW(s)}^2_X+4\mathbb{E}\norm{\int_{t_1-\epsilon}^{t_1}T(t_1-s-\epsilon)T(\epsilon)\sigma(s)dW(s)}^2_X\\
			=& 4[V_1+ V_2+V_3+V_4].
		\end{align*}
		Using again Theorem \ref{stochastic convolution estimate} we obtain
		\begin{align*}
			V_1=&\mathbb{E}\norm{\int_{0}^{t_1-\epsilon}[T(t_2-s-\epsilon)T(\epsilon)-T(t_1-s-\epsilon)T(\epsilon)]\sigma(s)dW(s)}^2_X\\
			= & \mathbb{E}\norm{\int_{0}^{t_1-\epsilon}T(t_1-s-\epsilon)[T(t_2-t_1)T(\epsilon)-T(\epsilon)]\sigma(s)dW(s)}^2_X\\
			\le & \mathbb{E}\sup_{t\in [0, t_1-\epsilon]}\norm{\int_{0}^{t}T(t-s)[T(t_2-t_1)T(\epsilon)-T(\epsilon)]\sigma(s)dW(s)}^2_X\\
			\le& \mathbb{E}\int_{0}^{t_1-\epsilon}\norm{[T(t_2-t_1)T(\epsilon)-T(\epsilon)]\sigma(s)}^2_Xds\\\
			\le& \norm{T(t_2-t_1+\epsilon)-T(\epsilon)}^2 \mathbb{E}\int_{0}^{t_1-\epsilon}\norm{\sigma(s)}_{\mathcal{L}^2_0}^2ds.
		\end{align*}
		By assumption, the semigroup $\{T(t)\}_{t\ge 0}$ is compact for $t > 0$ and hence, from  \cite[Theorem 3.3, page 48]{MR710486} the operator
		$T(t)$ is continuous in the uniform operator topology for $\delta < t \le a$. Therefore, by using the continuity
		of $T(t)$ in the uniform operator topology we obtain $V_1\to 0$ as $t_2\to t_1$ and $\epsilon\to 0^+$.
		
		We now estimate 
		\begin{align*}
			V_2=&\mathbb{E}\norm{\int_{t_2-\epsilon}^{t_2}T(t_2-s-\epsilon)T(\epsilon)\sigma(s)dW(s)}^2_X\\
			=& \mathbb{E}\norm{\int_{t_2-\epsilon}^{t_2}T(t_2-s)\sigma(s)dW(s)}^p\\
			\le& \mathbb{E}\sup_{t\in [t_2-\epsilon, t_2]} \norm{\int_{t_2-\epsilon}^{t}T(t-s)\sigma(s)dW(s)}^2_X\\
			\le & E \int_{t_2-\epsilon}^{t_2}\norm{\sigma(s)}^2_{\mathcal{L}^2_0}ds\to 0~\text{as}~\epsilon\to 0^+, t_2\to t_1.
		\end{align*}
		In the same way we prove $V_3, V_4\to 0$ as $t_2\to t_1$ and $\epsilon\to 0^+$. This proves that the map $V$ is continuous.
	\end{proof}
	\begin{lemma}\label{Stochastic Convergence}
		Let the operator $\Xi: L^2_{\mathbb{F}}(I,\mathcal{L}^2_0)\to C_{\mathbb{F}}(I, L^2(\Omega, X))$ be such that
		\begin{equation*}
			\Xi(\sigma)(t)=\int_{0}^{t}T(t-s)\sigma(s)dW(s),~ t\in I,~\sigma\in L^2_{\mathbb{F}}(I,\mathcal{L}^2_0),
		\end{equation*}
		where $\{T(t)\}_{t\ge 0}$ is a compact, strongly continuous semigroup on a separable reflexive Banach space $X$. If $\{\sigma_n\}_{n\in \mathbb{N}}$ is bounded in $L^2_{\mathbb{F}}(I,\mathcal{L}^2_0)$ and uniformly integrable, then the sequence $\{\Xi(\sigma_n)\}_{n\in \mathbb{N}}$ is relatively compact in $C_{\mathbb{F}}(I, L^2(\Omega, X))$.
	\end{lemma}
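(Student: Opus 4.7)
The plan is to invoke the vector-valued Arzel\`a--Ascoli theorem for the family $\{\Xi(\sigma_n)\}_n$ viewed as continuous maps from the compact interval $I$ into the Banach space $L^2(\Omega,X)$. Two properties must be verified: equicontinuity of the family in $C(I,L^2(\Omega,X))$, and pointwise relative compactness of the slices $\{\Xi(\sigma_n)(t)\}_n$ in $L^2(\Omega,X)$ for each $t\in I$. Once both are in hand, Arzel\`a--Ascoli yields relative compactness in $C_{\mathbb{F}}(I,L^2(\Omega,X))$.

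For equicontinuity, I would closely mimic the argument of Lemma~\ref{stochastic convolution}, making all bounds uniform in $n$. For $0\le t_1<t_2\le a$ and auxiliary parameter $\delta>0$, one splits $\Xi(\sigma_n)(t_2)-\Xi(\sigma_n)(t_1)$ into the same four stochastic integrals $V_1^{(n)},\dots,V_4^{(n)}$ used in Case--II there. The pieces $V_2^{(n)},V_3^{(n)},V_4^{(n)}$ are integrals over short subintervals whose $L^2$-norms, by Theorem~\ref{stochastic convolution estimate}, are bounded by $C\,\mathbb{E}\int_B\norm{\sigma_n(s)}^2_{\mathcal{L}^2_0}\,ds$ with $|B|$ of order $\delta$ or $|t_2-t_1|$; the uniform integrability hypothesis makes these small uniformly in $n$. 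The piece $V_1^{(n)}$ is dominated by $\norm{T(t_2-t_1+\delta)-T(\delta)}_{\mathcal{L}(X)}^2\cdot\sup_n\mathbb{E}\int_0^a\norm{\sigma_n(s)}^2_{\mathcal{L}^2_0}\,ds$: the first factor vanishes as $t_2-t_1\to 0$ by uniform continuity of $t\mapsto T(t)$ in the operator norm on $[\delta,a]$ (a consequence of the compactness of the semigroup), while the boundedness of $\{\sigma_n\}$ in $L^2_{\mathbb{F}}(I,\mathcal{L}^2_0)$ controls the second. Choosing $\delta$ small and then $|t_2-t_1|$ small yields equicontinuity.

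For pointwise relative compactness at $t>0$ (the case $t=0$ is trivial), I would use the factorization
\[
\Xi(\sigma_n)(t)=T(\delta)Z_n^{\delta}+R_n^{\delta},\quad Z_n^{\delta}=\int_{0}^{t-\delta}T(t-\delta-s)\sigma_n(s)\,dW(s),\quad R_n^{\delta}=\int_{t-\delta}^{t}T(t-s)\sigma_n(s)\,dW(s).
\]
The remainder satisfies $\sup_n\norm{R_n^{\delta}}_{L^2(\Omega,X)}^2\le C\sup_n\mathbb{E}\int_{t-\delta}^{t}\norm{\sigma_n(s)}^2_{\mathcal{L}^2_0}\,ds\to 0$ as $\delta\to 0^+$ by uniform integrability, so the residual contribution is asymptotically negligible uniformly in $n$. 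The family $\{Z_n^{\delta}\}_n$ is bounded in $L^2(\Omega,X)$, and since $X$ is separable reflexive so is $L^2(\Omega,X)$; one extracts a weakly convergent subsequence. The compactness of $T(\delta)\in\mathcal{L}(X)$, combined with the representation of $Z_n^{\delta}$ as a stochastic integral against the $Q$-Wiener process, is then used to upgrade this weak convergence to strong convergence of $\{T(\delta)Z_n^{\delta}\}_n$ in $L^2(\Omega,X)$. Consequently $\{\Xi(\sigma_n)(t)\}_n$ is approximated in $L^2(\Omega,X)$, uniformly in $n$, by the relatively compact sets $\{T(\delta)Z_n^{\delta}\}_n$, and is therefore itself totally bounded.

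The genuine obstacle is the weak-to-strong upgrade at the Bochner level: compactness of $T(\delta)$ as an operator on $X$ does not automatically imply compactness of the induced operator on $L^2(\Omega,X)$, and a naive ``compact maps bounded to relatively compact'' argument fails at that level. I would resolve this either by using the Karhunen--Lo\`eve expansion $W(t)=\sum_k\sqrt{\lambda_k}\,\beta_k(t)e_k$ to decompose $Z_n^{\delta}$ into a countable sum of scalar stochastic integrals whose coefficients live in $X$ (each acted upon by the compact operator $T(\delta)$), or by combining tightness of the laws of $\{Z_n^{\delta}\}_n$ on $X$ (produced by the compactness of $T(\delta)$ together with the Itô isometry) with the uniform integrability of $\{\norm{Z_n^{\delta}}_X^2\}_n$ to obtain strong $L^2(\Omega,X)$ convergence along a subsequence. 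This technical step is where I expect the real work of the proof to lie; everything else reduces to the estimates already developed in Theorem~\ref{stochastic convolution estimate} and Lemma~\ref{stochastic convolution}.
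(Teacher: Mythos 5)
Your overall architecture (Arzel\`a--Ascoli, the factorization $\Xi(\sigma_n)(t)=T(\delta)Z_n^{\delta}+R_n^{\delta}$, uniform integrability to kill the tail $R_n^{\delta}$, equicontinuity borrowed from Lemma~\ref{stochastic convolution}) is exactly the paper's. The difference is that you have correctly isolated the step the paper passes over in silence: the paper simply asserts that, ``by using the compactness of $T(t)$ for $t>0$,'' the set $\{T(\delta)Y_n^{\delta}(t)\}_n$ is totally bounded, i.e.\ it treats the operator induced by $T(\delta)$ on the Bochner space $L^2(\Omega,X)$ as if it were compact. As you observe, it is not: composition with a compact $T(\delta)\in\mathcal{L}(X)$ is compact as an operator on $L^2(\Omega,X)$ essentially only when $L^2(\Omega)$ is finite dimensional. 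So the paper's own proof commits precisely the error you warn against.

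Unfortunately, neither of your proposed repairs closes the gap, and in fact it cannot be closed: the lemma is false as stated. Take $X=\mathbb{R}$, $T(t)=e^{-t}$ (a compact semigroup on a separable reflexive space), one-dimensional noise, and $\sigma_n(s)=\operatorname{sign}(\sin(2^n s))$, which is deterministic, bounded by $1$, hence bounded and uniformly integrable in $L^2_{\mathbb{F}}(I,\mathcal{L}^2_0)$. The It\^o isometry gives
\begin{equation*}
\mathbb{E}\abs{\Xi(\sigma_n)(a)-\Xi(\sigma_m)(a)}^2=\int_0^a e^{-2(a-s)}\abs{\sigma_n(s)-\sigma_m(s)}^2\,ds,
\end{equation*}
which stays bounded away from $0$ for large $n\neq m$ (the cross term $\int_0^a e^{-2(a-s)}\sigma_n\sigma_m\,ds$ vanishes in the limit by oscillation), so $\{\Xi(\sigma_n)(a)\}_n$ has no convergent subsequence in $L^2(\Omega)$. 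The Karhunen--Lo\`eve decomposition does not help because the obstruction already occurs with scalar noise and a scalar state space; and tightness of the laws of $Z_n^{\delta}$ plus uniform integrability yields at best convergence in distribution along a subsequence, which says nothing about strong convergence in $L^2(\Omega,X)$ of the random variables themselves (variables with identical laws can be far apart in $L^2(\Omega,X)$). Your instinct that the weak-to-strong upgrade is ``where the real work lies'' is exactly right; the honest conclusion is that compactness of the semigroup alone cannot deliver the claimed relative compactness, and any correct argument must draw on additional structure of the integrands.
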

	\begin{proof}
		We assume that the sequence $\{\sigma_n\}_{n\in \mathbb{N}}\subset L^2_{\mathbb{F}}(I,\mathcal{L}^2_0)$ is bounded and integrably bounded. We show the sequence $\{\Xi(\sigma_n)\}_{n\in \mathbb{N}}$ is relatively compact in $C_{\mathbb{F}}(I, L^2(\Omega, X))$. We use the Arzela-Ascoli Theorem to prove this. 
		
		We initially prove the sequence $\{\Xi(\sigma_n)\}_{n\in \mathbb{N}}\subset C_{\mathbb{F}}(I, L^2(\Omega, X))$ is pointwise relatively compact and equicontinuous. As in the proof of Lemma \ref{stochastic convolution} we prove that sequence $\{\Xi(\sigma_n)\}_{n\in \mathbb{N}}\subset C_{\mathbb{F}}(I, L^2(\Omega, X))$ is equicontinuous. Therefore, it remains to prove that the sequence $\{\Xi(\sigma_n)\}_{n\in \mathbb{N}}\subset C_{\mathbb{F}}(I, L^2(\Omega, X))$ is pointwise relatively compact.

		For $t=0$, it is trivial. 
		Take $t>0$. Fix $\delta>0$. We now define
		\begin{equation}
			Y_n(t)=\int_{0}^{t-\delta}T(t-s)\sigma_n(s)dW(s), ~n\in \mathbb{N}~t\in (0,a].
		\end{equation}
		Using the definition of semigroup $\{T(t)\}_{t\ge 0}$, we can also write it as
		\begin{equation}
			Y_n(t)=T(\delta)\int_{0}^{t-\delta}T(t-\delta-s)\sigma_n(s)dW(s)=T(\delta)Y_{n}^{\delta}(t),
		\end{equation}
		where
		\begin{equation}
			Y_n^{\delta}(t)=\int_{0}^{t-\delta}T(t-s-\delta)\sigma_n(s)ds~\text{for}~n\in \mathbb{N},~t\in (0,a].
		\end{equation}
		Then it is easy to see that the sequence $\{Y_n^{\delta}(t)\}_{n\in \mathbb{N}}$ is bounded in $L^2\Omega,X)$ for $t\in (0,a]$. Hence, by using the compactness of $T(t)$ for $t>0$, we can find a finite $z_i, i=1,2,\cdot\cdot\cdot p$ in $X$ such that
		\begin{equation}\label{total bound}
			\{Y_n(t)\}_{n\in \mathbb{N}}\subset \bigcup_{i=1}^{p} B\left(z_i,\frac{\epsilon}{2}\right).
		\end{equation}
		Using the properties of uniform integrability of the sequence $\{\sigma_n(\cdot)\}_{n\in \mathbb{N}}\subset L^2(I,\mathcal{L}^2_0)$ we compute
		\begin{align*}
			\mathbb{E}\norm{\Xi(\sigma_n)(t)-Y_n(t)}^2=&\mathbb{E}\norm{\int_{0}^{t}T(t-s)\sigma_n(s)dW(s)-\int_{0}^{t-\delta}T(t-s)\sigma_n(s)dW(s)}_X^2\\
			=& \mathbb{E}\norm{ \int_{t-\delta}^{t}T(t-s)\sigma_n(s)dW(s)}^2_X\\
			\le & \mathbb{E}\sup_{r\in [t-\delta, t]}\norm{ \int_{t-\delta}^{r}T(r-s)\sigma_n(s)dW(s)}^2_X\\
			\le &  K_a \mathbb{E}\int_{t-\delta}^{t}\norm{\sigma_n(s)}^2ds\to 0~\text{as}~\delta \to 0^+. 
		\end{align*} 
		Consequently, from \eqref{total bound} we infer that
		\begin{equation}
			\{\Xi(\sigma_n)(t)\}_{n\in \mathbb{N}}\subset \bigcup_{i=1}^{p}B(z_i,\epsilon).
		\end{equation}
		Thus for every $t\in I$, the sequence $\{\Xi \sigma_n(t)\}_{n\in \mathbb{N}}$ is relatively compact in $X$. Hence, the proof is concluded by employing the Arzela-Ascoli Theorem.
	\end{proof}
	
	\begin{corollary}\label{stochastic conv}
		Let the operator $\Xi: L^2_{\mathbb{F}}(I,\mathcal{L}^2_0)\to C_{\mathbb{F}}(I, L^2(\Omega, X))$ be such that
		\begin{equation*}
			\Xi(\sigma)(t)=\int_{0}^{t}T(t-s)\sigma(s)dW(s),~ t\in I,~\sigma\in L^2_{\mathbb{F}}(I,\mathcal{L}^2_0),
		\end{equation*}
		where $\{T(t)\}_{t\ge 0}$ is a compact strongly continuous semigroup on a separable reflexive Banach space $X$. Suppose the sequence $\{\sigma_n\}_{n\in \mathbb{N}}$ is bounded in $L^2_{\mathbb{F}}(I,\mathcal{L}^2_0)$ and uniformly integrable. If $\sigma_n$ converges weakly to $\sigma$ in $L^2_{\mathbb{F}}$ then the sequence $\{\Xi(\sigma_n)\}_{n\in \mathbb{N}}$ converges to $\Xi(\sigma)$ in $C_{\mathbb{F}}(I, L^2(\Omega, X))$.
	\end{corollary}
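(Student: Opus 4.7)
The plan is to combine the compactness result of Lemma \ref{Stochastic Convergence} with the linearity/boundedness of the stochastic convolution operator $\Xi$ through a standard subsequence argument. Under the hypotheses, Lemma \ref{Stochastic Convergence} immediately gives that $\{\Xi(\sigma_n)\}_{n\in \mathbb{N}}$ is relatively compact in $C_{\mathbb{F}}(I,L^2(\Omega,X))$. So every subsequence admits a further subsequence converging strongly in $C_{\mathbb{F}}(I,L^2(\Omega,X))$ to some limit $V$. The task is to identify $V$ with $\Xi(\sigma)$ for \emph{every} such subsequential limit; the whole sequence must then converge to $\Xi(\sigma)$.

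To identify the limit, I would first observe that for each fixed $t\in I$ the map $\sigma \mapsto \Xi(\sigma)(t) = \int_0^t T(t-s)\sigma(s)\,dW(s)$ is a bounded linear operator from the Hilbert space $L^2_{\mathbb{F}}(I,\mathcal{L}^2_0)$ into the Hilbert space $L^2(\Omega,X)$, as follows directly from Theorem \ref{TS1} together with the uniform bound $\|T(t-s)\|_{\mathcal{L}(X)}\le M$. As a bounded linear operator between Hilbert spaces it is weak--weak continuous; hence from $\sigma_n \rightharpoonup \sigma$ in $L^2_{\mathbb{F}}(I,\mathcal{L}^2_0)$ we conclude
\begin{equation*}
\Xi(\sigma_n)(t) \rightharpoonup \Xi(\sigma)(t) \quad \text{weakly in } L^2(\Omega,X), \text{ for every } t\in I.
\end{equation*}

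Now I would run the subsequence argument. Let $\{\Xi(\sigma_{n_k})\}$ be any subsequence. By the compactness given by Lemma \ref{Stochastic Convergence}, there is a further subsequence $\{\Xi(\sigma_{n_{k_j}})\}$ and some $V\in C_{\mathbb{F}}(I,L^2(\Omega,X))$ such that $\Xi(\sigma_{n_{k_j}}) \to V$ in $C_{\mathbb{F}}(I,L^2(\Omega,X))$; in particular, for each $t\in I$, $\Xi(\sigma_{n_{k_j}})(t) \to V(t)$ strongly in $L^2(\Omega,X)$. Since strong convergence implies weak convergence and weak limits in a Hilbert space are unique, the weak convergence just established forces $V(t)=\Xi(\sigma)(t)$ for every $t$, that is, $V=\Xi(\sigma)$. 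Because every subsequence admits a further subsequence converging to the \emph{same} limit $\Xi(\sigma)$, the full sequence $\{\Xi(\sigma_n)\}$ converges to $\Xi(\sigma)$ in $C_{\mathbb{F}}(I,L^2(\Omega,X))$.

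The main obstacle I anticipate is the verification of the weak-to-weak continuity at each time $t$; the rest is a routine compactness-plus-uniqueness closure. One must check carefully that the class of admissible test integrands in Itô's isometry makes $\Xi(\cdot)(t)$ a genuine bounded linear map between two Hilbert spaces (so that one can transfer the weak convergence from $L^2_{\mathbb{F}}(I,\mathcal{L}^2_0)$ to $L^2(\Omega,X)$), and that the adaptedness/progressive-measurability of the weak limit $\sigma$ is preserved; both follow because $L^2_{\mathbb{F}}(I,\mathcal{L}^2_0)$ is a closed (hence weakly closed) subspace of $L^2(I\times\Omega,\mathcal{L}^2_0)$ and Theorem \ref{TS1} provides the required continuity estimate. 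Once this step is in hand, the convergence in the corollary is obtained with no further work.
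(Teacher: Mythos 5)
Your proposal is correct and follows essentially the same route as the paper: relative compactness of $\{\Xi(\sigma_n)\}$ from Lemma \ref{Stochastic Convergence}, pointwise weak convergence $\Xi(\sigma_n)(t)\rightharpoonup\Xi(\sigma)(t)$ from the weak--weak continuity of the bounded linear map $\sigma\mapsto\Xi(\sigma)(t)$, and identification of the limit. The only difference is that you spell out the final subsequence-plus-uniqueness-of-weak-limits closure explicitly, which the paper leaves implicit; this is a welcome clarification rather than a deviation.
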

	\begin{proof}
		Suppose that the sequence $\sigma_n$ converges weakly to $\sigma$ in $L^2_{\mathbb{F}}(I,\mathcal{L}^2_0)$. We initially prove the sequence $\{\Xi(\sigma_n)(t)\}_{n\in \mathbb{N}}$ converges weakly to $\Xi(\sigma)(t)$ in $L^2(\Omega, X)$ for each $t\in [0,a]$. Fix $t\in [0, a]$. Let $x^*:L^2(\Omega, X)\to \mathbb{R}$ be a linear continuous operator. Then, it is easy to show that the operator
		\begin{equation}
			\sigma\to \int_{0}^{t}T(t-s)\sigma(s)dW(s)
		\end{equation}
		is linear and continuous operator from $ L^2_{\mathbb{F}}(I,\mathcal{L}^2_0))$ to $L^2(\Omega, X)$. Thus, we have the operator 
		\begin{equation}
			\sigma \mapsto x^*\circ \int_{0}^{t}T(t-s)\sigma(s)dW(s)
		\end{equation}
		is a linear and continuous operator from $L^2_{\mathbb{F}}(I,\mathcal{L}^2_0)$ to $\mathbb{R}$  for all $t\in [0,a]$. Further, from the definition of weak convergence, for every $t\in [0, a]$, we obtain
		\begin{equation}
			x^*\circ \int_{0}^{t}T(t-s)\sigma_n(s)dW(s)\to x^*\circ \int_{0}^{t}T(t-s)\sigma(s)dW(s).
		\end{equation}
		Thus, we proved that the sequence
		\begin{equation}
			\int_{0}^{t}T(t-s)\sigma_n(s)dW(s)\rightharpoonup \int_{0}^{t}T(t-s)\sigma(s)dW(s),
		\end{equation}
		in $L^2(\Omega, X)$. By Lemma \ref{Stochastic Convergence} we conclude that the sequence $\{\Xi(\sigma_n)\}_{n\in \mathbb{N}}$ is relatively compact in $C_F(I, L^2(\Omega, X))$. Therefore, we confirm that the sequence $\{\Xi(\sigma_n)\}_{n\in \mathbb{N}}$ converges to $\Xi(\sigma)$ in $C_F(I, L^2(\Omega, X))$. 
	\end{proof}
	\begin{lemma}\label{Determininstic Convergence}
		Let the operator $\Psi: L^2_{\mathbb{F}}(I,X)\to C_{\mathbb{F}}(I, L^2(\Omega, X))$ be such that
		\begin{equation*}
			\Psi(f)(t)=\int_{0}^{t}T(t-s)f(s)ds,~ t\in I,~f\in L^2_{\mathbb{F}}(I,X),
		\end{equation*}
		where $\{T(t)\}_{t\ge 0}$ is a compact strongly continuous semigroup on $X$. If $\{f_n\}_{n\in \mathbb{N}}$ is bounded in $L^2_{\mathbb{F}}(I,X)$, then the sequence $\{\Psi(f_n)\}_{n\in \mathbb{N}}$ is relatively compact in $C_{\mathbb{F}}(I, L^2(\Omega, X))$.
	\end{lemma}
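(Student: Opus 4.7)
The plan is to mirror the proof of Lemma \ref{Stochastic Convergence} and apply the Arzelà--Ascoli theorem to the family $\{\Psi(f_n)\}_{n\in\mathbb{N}}\subset C_{\mathbb{F}}(I,L^2(\Omega,X))$. Since $\{f_n\}$ is bounded in $L^2_{\mathbb{F}}(I,X)$ by some constant $C$, Hölder's inequality gives
\begin{equation*}
\mathbb{E}\norm{\Psi(f_n)(t)}_X^2 \le M^2 a\, \mathbb{E}\int_0^a \norm{f_n(s)}_X^2\, ds \le M^2 a C,
\end{equation*}
so the family is pointwise bounded in $L^2(\Omega,X)$. It remains to prove equicontinuity on $I$ in $L^2(\Omega,X)$ and pointwise relative compactness in $L^2(\Omega,X)$.

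For equicontinuity, given $0<t_1<t_2\le a$ and $\epsilon\in(0,t_1)$, I would split
\begin{equation*}
\Psi(f_n)(t_2)-\Psi(f_n)(t_1) = \int_0^{t_1-\epsilon}[T(t_2-s)-T(t_1-s)]f_n(s)\,ds + \int_{t_1-\epsilon}^{t_1}[T(t_2-s)-T(t_1-s)]f_n(s)\,ds + \int_{t_1}^{t_2}T(t_2-s)f_n(s)\,ds,
\end{equation*}
and bound each piece in $L^2(\Omega,X)$ via Hölder's inequality, in direct analogy with Lemma \ref{stochastic convolution} but with the Bochner integral in place of the Itô integral. The decisive input is that compactness of $T(t)$ for $t>0$ forces $T$ to be continuous in the uniform operator topology on $[\epsilon,a]$, so $\sup_{s\in[0,t_1-\epsilon]}\norm{T(t_2-s)-T(t_1-s)}_{\mathcal{L}(X)}\to 0$ as $t_2\to t_1$. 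The remaining two pieces are controlled by $2M\sqrt{\epsilon C}$ and $M\sqrt{(t_2-t_1)C}$ respectively, so taking $\epsilon$ small first and then $|t_2-t_1|$ small yields equicontinuity uniformly in $n$.

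For pointwise relative compactness at a fixed $t\in(0,a]$, and $\delta\in(0,t)$, I would decompose
\begin{equation*}
\Psi(f_n)(t) = T(\delta)Y_n^\delta(t)+R_n^\delta(t),\quad Y_n^\delta(t)=\int_0^{t-\delta}T(t-\delta-s)f_n(s)\,ds,\quad R_n^\delta(t)=\int_{t-\delta}^{t}T(t-s)f_n(s)\,ds.
\end{equation*}
Hölder's inequality gives $\mathbb{E}\norm{R_n^\delta(t)}_X^2\le M^2\delta C$, which vanishes as $\delta\to 0^+$ uniformly in $n$. Since $\{Y_n^\delta(t)\}_n$ is bounded in $L^2(\Omega,X)$ and $T(\delta)$ is compact on $X$, the template used in Lemma \ref{Stochastic Convergence} then lets me cover $\{T(\delta)Y_n^\delta(t)\}_n$ by finitely many balls of radius $\epsilon/2$ in $L^2(\Omega,X)$; combined with the uniform smallness of $R_n^\delta(t)$ this produces a finite $\epsilon$-net for $\{\Psi(f_n)(t)\}_n$, so the set is totally bounded and hence relatively compact in $L^2(\Omega,X)$. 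Arzelà--Ascoli then delivers the conclusion.

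The step I expect to demand the most care is this transfer from boundedness of $\{Y_n^\delta(t)\}$ in $L^2(\Omega,X)$ to relative compactness of $\{T(\delta)Y_n^\delta(t)\}$ in $L^2(\Omega,X)$, which is the same compactness-on-$X$ to compactness-in-Bochner passage that appears in Lemma \ref{Stochastic Convergence}. I would import that argument verbatim rather than rederive it, since the present lemma is designed to play the same role for the deterministic convolution inside the proof of Theorem \ref{MR} that Lemma \ref{Stochastic Convergence} plays for the stochastic convolution; the equicontinuity and the remainder estimate are otherwise routine once Hölder's inequality replaces the Itô isometry.
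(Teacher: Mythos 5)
The paper states this lemma without any proof, so there is no argument of record to compare yours against; the natural benchmark is the paper's proof of the stochastic analogue, Lemma \ref{Stochastic Convergence}, and that is exactly the template you follow. Your pointwise bound, your equicontinuity argument (splitting off the tail integrals and invoking norm-continuity of $t\mapsto T(t)$ on $[\epsilon,a]$, which does follow from compactness of $T(t)$ for $t>0$), and your remainder estimate $\mathbb{E}\norm{R_n^{\delta}(t)}_X^2\le M^2\delta C$ are all correct and are the right deterministic counterparts of the estimates in Lemmas \ref{stochastic convolution} and \ref{Stochastic Convergence}.

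The step you single out as the delicate one is, however, a genuine gap, and importing the paper's argument verbatim does not close it, because the paper never justifies it either: in the proof of Lemma \ref{Stochastic Convergence} the finite covering \eqref{total bound} is simply asserted ``by using the compactness of $T(t)$ for $t>0$.'' The difficulty is that compactness of $T(\delta)$ as an operator on $X$ does not make the induced composition operator $Y\mapsto T(\delta)Y$ compact on the Bochner space $L^2(\Omega,X)$: boundedness of $\{Y_n^{\delta}(t)\}_n$ in $L^2(\Omega,X)$ gives, for each fixed $\omega$, a compact set of $X$ containing $\{T(\delta)Y_n^{\delta}(t,\omega)\}_n$, but this controls only the $X$-direction and says nothing about total boundedness in the (infinite-dimensional) randomness direction $L^2(\Omega)$. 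Concretely, take $X=\mathbb{R}$ and $T(t)\equiv I$, which is a compact $C_0$-semigroup; let $g_n=h_n(W(t_0))$ be an orthonormal sequence in $L^2(\Omega)$ built from normalized Hermite polynomials of $W(t_0)$, and set $f_n(s)=\mathbf{1}_{[t_0,a]}(s)\,g_n$. Then $\{f_n\}$ is adapted and bounded in $L^2_{\mathbb{F}}(I,X)$, yet $\Psi(f_n)(a)=(a-t_0)g_n$ is an orthogonal sequence of constant norm in $L^2(\Omega)$ and has no convergent subsequence. So the conclusion cannot be reached by the proposed route (nor, as stated, at all): one needs either an additional hypothesis forcing compactness in the $\Omega$-variable or a compactness criterion tailored to $L^2(\Omega,X)$, and the same objection applies to the paper's Lemma \ref{Stochastic Convergence} on which Theorem \ref{MR} relies.
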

	\bibliographystyle{elsarticle-num}
	\bibliography{sn-bibliography}
	
\end{document}